\title{An extension theorem for quasimorphisms}
\author{Bingxue Tao}
\newcommand{\llangle}{\left\langle\!\left\langle}
\newcommand{\rrangle}{\right\rangle\!\right\rangle}
\newcommand{\N}{\mathbb N}
\newcommand{\R}{\mathbb R}
\newcommand{\Z}{\mathbb Z}
\newcommand{\lab}{\mathbf{Lab}}
\newcommand{\mcg}{\mathrm{MCG}}
\newcommand{\cl}{\mathrm{cl}}
\newcommand{\scl}{\mathrm{scl}}
\newtheorem{thm}{Theorem}[section]
\newtheorem{lem}[thm]{Lemma} 
\newtheorem{cor}[thm]{Corollary}
\newtheorem{prop}[thm]{Proposition}
\newtheorem{que}[thm]{Question}
\theoremstyle{definition}
\newtheorem{defn}[thm]{Definition}
\theoremstyle{remark}
\begin{document}

\begin{abstract}
    We provide a general sufficient condition for extendability of quasimorphisms on subgroups. This condition recovers the result of Hull--Osin on quasimorphisms on hyperbolically embedded subgroups, and the proof given in this paper is much simpler. We also obtain new results for quasimorphisms on normal subgroups. One result is that for a group $G$ and its normal subgroup $K$, if the quotient $G/K$ is hyperbolic, then any antisymmetric quasi-invariant quasimorphism on $K$ extends to $G$. As an application, the stable commutator length $\mathrm{scl}_G$ is bi-Lipschitz equivalent to the stable mixed commutator length $\mathrm{scl}_{G,K}$ on $[G,K]$. 
    
    Another result concerns about group-theoretic Dehn filling in the sense of Dahmani--Guirardel--Osin. As an application, the quotient of a mapping class group of a surface with boundary by the normal closure of a large power of a pseudo-Anosov element is hierarchically hyperbolic. This gives an affirmative answer to a question of Fournier-Facio--Mangioni--Sisto. 
\end{abstract}

\maketitle

\section{Introduction}

    Let $G$ be a group. A function $\varphi:G\to \R$ is called a \emph{quasimorphism} or a \emph{pseudo-character} if its \emph{defect} 
    \[D(\varphi):=\sup_{g,h\in G} |\varphi(g)+\varphi(h)-\varphi(gh)|\] 
    is finite. Quasimorphisms are closely connected to bounded cohomology of groups and have been studied for a long time within geometric group theory and symplectic geometry. There are also substantial connections to low-dimensional topology and group actions on the circle (see \cite{Cal09a,PR14,Fri17,KKM19}).     Many fundamental constructions (e.g. Brooks quasimorphisms on mapping class groups and surface diffeomorphism groups \cite{BF02,BHW22}, Calabi quasimorphisms on Hamiltonian diffeomorphism groups of symplectic manifolds \cite{EP03,Py06}) arise as extensions of quasimorphisms on certain subgroups to the whole group. A natural but delicate problem is: 

    \begin{que}\label{QMProb}
         Given a subgroup $K<G$ and a quasimorphism $\varphi$ on $K$, when and how can $\varphi$ be extended to a quasimorphism on $G$?
    \end{que}

    A remarkable answer to Question \ref{QMProb} was obtained by Hull and Osin.

    \begin{thm}\cite{HO13}\label{HOHypEmb}
        If $K$ is a hyperbolically embedded subgroup of $G$, then any antisymmetric quasimorphism on $K$ extends to $G$.
    \end{thm}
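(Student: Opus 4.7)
The plan is to construct the extension directly using the geometry of the relative Cayley graph provided by the hyperbolic embedding. Let $X$ be a finite subset of $G$ such that $X \cup K$ generates $G$, and let $\hat\Gamma$ denote the Cayley graph of $G$ with respect to this generating set, with a separate edge for each nontrivial element of $K$. The hypothesis that $K$ is hyperbolically embedded provides two facts: $\hat\Gamma$ is Gromov hyperbolic, and the induced \emph{relative metric} $\hat d$ on $K$, measuring length of paths in $\hat\Gamma$ whose $K$-edges do not sit at vertices of $K$ itself, is proper. For each $g \in G$ I fix once and for all a geodesic $\gamma_g$ from $1$ to $g$ in $\hat\Gamma$, choosing the single-edge geodesic when $g \in K$. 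Writing $k_1(g), \ldots, k_{r(g)}(g)$ for the labels of the $K$-edges of $\gamma_g$ in order, I define
\[ \tilde\varphi(g) := \sum_{i=1}^{r(g)} \varphi(k_i(g)). \]
By construction $\tilde\varphi|_K = \varphi$.

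To bound the defect, I would consider the geodesic triangle $T$ with vertices $1$, $g$, and $gh$ in $\hat\Gamma$, with sides $\gamma_g$, the translate $g \cdot \gamma_h$, and the reverse of $\gamma_{gh}$. The central technical fact from Dahmani--Guirardel--Osin is that there exists a constant $D$ such that whenever two distinct $K$-edges on the sides of a geodesic polygon in $\hat\Gamma$ have endpoints in a common left $K$-coset, the two corresponding elements of $K$ are at $\hat d$-distance at most $D$. Combined with $\delta$-thinness of $T$, this matches up almost all $K$-edges on the three sides of $T$ in pairs that share a common $K$-coset of endpoints, leaving only boundedly many unpaired $K$-edges whose labels have $\hat d$-length at most $D$. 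For paired edges, antisymmetry $\varphi(k^{-1}) = -\varphi(k)$ together with the defect of $\varphi$ on $K$ makes the contributions cancel in $\tilde\varphi(gh) - \tilde\varphi(g) - \tilde\varphi(h)$ up to bounded error; for unpaired edges, properness of $\hat d$ forces the labels into a fixed finite set, hence they contribute boundedly as well. Summing yields a uniform defect bound.

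The hard part will be the matching step. I have to track (i) how each $K$-edge on one side of $T$ is paired with a $K$-edge on another side via the thin-triangle fellow-traveling, (ii) how the labels transform under this pairing so that antisymmetry produces actual cancellation with the correct sign, and (iii) how to uniformly absorb the finitely many unpaired $K$-edges per triangle using the defect of $\varphi$ and properness of $\hat d$. Once this geometric bookkeeping is in place, the theorem follows immediately, since $\tilde\varphi|_K = \varphi$ by the chosen normalization of geodesics.
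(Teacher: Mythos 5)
You are attempting a direct, Hull--Osin-style construction: fix geodesics in $\hat\Gamma=\Gamma(G,K\sqcup X)$, sum $\varphi$ over the labels of their $K$-edges, and bound the defect by matching $K$-edges across the sides of a thin triangle. This is a genuinely different route from the paper, which never matches components: the paper notes that a hyperbolic embedding gives a \emph{strongly bounded} relative presentation (Definition \ref{DefHypEmb}), so only finitely many $K$-letters occur in relators of uniformly bounded length, hence $\sup_{r\in R}|\tilde\varphi(r)|<\infty$, so $\varphi$ is $(G,X)$-controlled by Proposition \ref{Prop_CTviaRP}, and Theorem \ref{MainThm} then produces the extension as the antisymmetrization of an infimum of $\tilde\varphi(a)+C|a|_X$ over all words representing $g$. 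Your sketch, however, has genuine gaps. First, the ``central technical fact'' you attribute to Dahmani--Guirardel--Osin is misstated: their result (the one Hull--Osin also rely on) bounds the total relative length of the \emph{isolated} components of a geodesic polygon, linearly in the number of sides; it is false that any two $K$-edges of a geodesic polygon whose endpoints lie in a common left coset have labels at uniformly bounded $\hat d$-distance. For example, take $g=kx_1$ and $gh=k'x_2$ with $x_1,x_2\in X$ and $k,k'\in K$ arbitrary: the initial $K$-edges of the sides $[1,g]$ and $[1,gh]$ are connected (both lie in the coset $K$), yet $\hat d(k,k')=\hat d(1,k^{-1}k')$ is unbounded. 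In such configurations the cancellation is not between the two connected edges via antisymmetry at all; it must route through the third side and the defect of $\varphi$, as in $\varphi(k)+\varphi(k^{-1}k')\approx\varphi(k')$. (Also, you cannot assume $X$ finite: a hyperbolically embedded subgroup need not admit a finite relative generating set realizing the embedding.)

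Second, and more seriously, the error accounting you describe cannot close as stated. Even with a corrected matching, two matched $K$-edges on different sides need not carry equal or inverse labels; their labels differ by adjusting elements of $K$ that are only $\hat d$-bounded, so each matched pair cancels only up to an error of order $D(\varphi)$ plus the $\varphi$-values of boundedly many bounded elements. The number of matched pairs grows linearly with the lengths of the geodesics, so summing ``bounded error per pair'' yields a bound depending on $g$ and $h$, not a uniform defect bound. To make this strategy work one needs exact cancellation (or otherwise summable control) for all but uniformly boundedly many components per triangle, and arranging that is precisely the technical core of Hull--Osin's argument---the part you defer as ``geometric bookkeeping.'' As written, the proposal is an outline of the original proof with its hardest step missing and its key geometric input misquoted, so it does not yet prove the theorem; by contrast, the paper's reduction to Theorem \ref{MainThm} invokes hyperbolicity only through a single thin-triangle estimate (Lemma \ref{WeakQM}) with a bounded number of error terms, which is exactly why it is simpler.
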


    In fact, Hull and Osin proved the theorem for $1$-quasicocycles with more general coefficients. Later, their result has been generalized to higher-dimensional quasicocyles by Frigerio, Pozzetti, and Sisto \cite{FPS15}. As an application, they prove that the third bounded cohomology of an acylindrically hyperbolic group is infinite-dimensional. 
    
    On the other hand, Kawasaki, Kimura, Maruyama, Matsushita, and Mimura have a series of work studying Question \ref{QMProb} for $K$ being a normal subgroup of $G$. We refer to the survey \cite{KKMMM24} and the more recent paper \cite{KKMMM25a} as an update. 
    As a notable extrinsic application, four of them \cite{KKMM23} showed the vanishing of the cup product of the fluxes of commuting symplectomorphisms for any closed oriented surface of genus at least two by examining the extendability of certain Calabi quasimorphisms. Note that an infinite normal subgroup cannot be hyperbolically embedded \cite[Proposition 4.33]{DGO17}. Therefore, their work addresses entirely different situations from Theorem \ref{HOHypEmb}. As we will see, one advantage of our result is that Theorem \ref{MainThm} applies to both. 
    
    Let $X$ be a (symmetric) relative generating set of $G$ with respect to $K$. Let $\Gamma(G,K\sqcup X)$ be the Cayley graph of $G$ with respect to the generating set $K\sqcup X$. In this paper, we introduce a new concept, which we call $(G,X)$-\emph{controlled} quasimorphisms. 
    Roughly speaking, a quasimorphism $\varphi$ on $K$ is $(G,X)$-controlled if the $\varphi$-value of the product of all $K$-letters appearing in a cycle of length $L$ in $\Gamma(G,K\sqcup X)$ is bounded by a function with respect to $L$. We refer the reader to Definition \ref{Def_Contr} for the precise definition. 
    Let $Q_{as}(K)$ be the set of all antisymmetric quasimorphisms on $K$, and let $Q_{as}(K)^G$ be its subset of all $G$-quasi-invariant ones. Our main theorem is as follows.
    
    \begin{thm}\label{MainThm}
        Assume that $\Gamma(G,K\sqcup X)$ is $\delta$-hyperbolic for some constant $\delta\ge 0$. Let $\varphi\in Q_{as}(K)$. If $\varphi$ is $(G,X)$-controlled, then $\varphi$ extends to a quasimorphism $\Phi\in Q_{as}(G)$. Moreover, if $\varphi$ is linearly $(G,X)$-controlled with a constant $C_0>0$, then $D(\Phi)\le MC_0+MD(\varphi)$, where $M$ is a constant only depending on $\delta$.
    \end{thm}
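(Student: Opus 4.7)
The plan is to construct $\Phi$ from a choice of geodesic representatives in $\Gamma(G,K\sqcup X)$, then reduce the defect bound to a single value of $\varphi$ on a ``triangle cycle'' and estimate that value using $\delta$-hyperbolicity.

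For each $g\in G$ I would fix a geodesic $\gamma_g$ from $1$ to $g$ in $\Gamma(G,K\sqcup X)$, subject to the conventions that $\gamma_k$ is the single $K$-edge whenever $k\in K$ (so that $\Phi$ restricts to $\varphi$ on $K$ automatically) and that $\gamma_{g^{-1}}$ is the reversal of $\gamma_g$ (so as to force antisymmetry). Reading the edge labels of $\gamma_g$ as a word in $K\cup X$, let $\pi_K(\gamma_g)\in K$ be the ordered product of the $K$-letters of that word, and set
\[
\Phi(g):=\varphi(\pi_K(\gamma_g)).
\]
The reversal convention gives $\pi_K(\gamma_{g^{-1}})=\pi_K(\gamma_g)^{-1}$, so antisymmetry of $\varphi$ yields $\Phi(g^{-1})=-\Phi(g)$.

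For the quasimorphism inequality, fix $g,h\in G$ and consider the geodesic triangle with vertices $1,g,gh$ and sides $\gamma_g$, $g\cdot\gamma_h$, $\gamma_{gh}^{-1}$. Its boundary $C$ is a cycle in $\Gamma(G,K\sqcup X)$ whose ordered $K$-letter product is exactly
\[
P:=\pi_K(\gamma_g)\,\pi_K(\gamma_h)\,\pi_K(\gamma_{gh})^{-1}\in K.
\]
A double use of the quasimorphism property of $\varphi$, together with its antisymmetry, gives
\[
\bigl|\Phi(g)+\Phi(h)-\Phi(gh)-\varphi(P)\bigr|\le 2D(\varphi),
\]
so it suffices to bound $|\varphi(P)|$ by a quantity of the form $M'C_0+M''D(\varphi)$ with $M',M''$ depending only on $\delta$.

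The main obstacle is that $C$ can have length comparable to $|g|+|h|+|gh|$, while the linear controlled hypothesis only bounds $|\varphi(\pi_K(\omega))|$ in terms of the length of the cycle $\omega$; applied directly to $C$ this gives a bound growing with $|g|,|h|$. The $\delta$-hyperbolicity of $\Gamma(G,K\sqcup X)$ enters precisely here: the geodesic triangle admits the standard tripod-like decomposition into a central region of diameter $O(\delta)$ together with three pairs of $O(\delta)$-fellow-travelling geodesic segments. Since $\pi_K$ is multiplicative under splitting a cycle along an interior edge, I would slide along the fellow-travelling ladders to replace $C$ by a cycle whose length is bounded in terms of $\delta$ alone, incurring only a bounded (in $\delta$) number of invocations of the quasimorphism property of $\varphi$. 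A final application of the linear controlled hypothesis to the resulting short cycle then bounds $|\varphi(P)|$ as required, giving $D(\Phi)\le MC_0+MD(\varphi)$. The delicate technical point is to carry out the sliding step in $O_\delta(1)$ moves rather than in $O(|g|+|h|)$ moves; this is where the precise algebraic form of the $(G,X)$-controlled condition has to be combined with the thin-triangle geometry, and it is the step I expect to require the most care.
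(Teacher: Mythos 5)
There is a genuine gap, and it sits exactly where you flagged it: the ``sliding'' step does not work, and it cannot be made to work for an arbitrary choice of geodesic representatives. Homotoping the triangle cycle $C$ across a long thin ladder requires a number of elementary moves proportional to the length of the ladder, i.e.\ to $|g|+|h|$, not to $\delta$: each $O(\delta)$-perimeter quadrilateral you cross contributes an error of order $D(\varphi)$ from splitting the cycle plus an error of order $f(O(\delta))\approx C_0\delta$ from the controlled hypothesis, and these errors add up linearly along the ladder. There is no cancellation to appeal to, because the controlled condition is only a linear bound: two geodesics from $1$ to $g$ of length $L$ form a cycle of length $2L$, so their $K$-letter products $k_1,k_2$ are only constrained by $|\varphi(k_1k_2^{-1})|\le 2C_0L+C_0$, and hence $\varphi(\pi_K(\gamma_g))$ genuinely depends on the choice of $\gamma_g$ up to an error linear in $|g|$. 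Since your $\gamma_{gh}$ is chosen independently of $\gamma_g$ and $\gamma_h$, the quantity $\varphi(P)$ can be of order $C_0(|g|+|h|)$, so the function $\Phi(g)=\varphi(\pi_K(\gamma_g))$ built from arbitrary geodesic choices need not be a quasimorphism at all. (This is the same phenomenon that forces Neumann--Reeves to use a maximizing technique for central extensions rather than reading off the $\Z$-coordinate of an arbitrary geodesic representative.)

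The paper's proof supplies precisely the missing idea: instead of fixing representatives, it penalizes $X$-length and minimizes, setting $\tilde\varphi_C(a)=\tilde\varphi(a)+C|a|_X$ for a large constant $C$ (of order $C_0+D(\varphi)$) and $\Phi_C(g)=\inf\{\tilde\varphi_C(a)\mid\theta(a)=g\}$. The linear control guarantees the infimum is attained coarsely and that $\Phi_C|_K=\varphi$; crucially, near-minimal words label $(3,2)$-quasi-geodesics whose subpaths are again near-minimal, so the comparison of $\Phi_C(g)+\Phi_C(h)$ with $\Phi_C(gh)$ is reduced, via a nearest-point projection and the Morse lemma, to an error of size $O_\delta(C)$ rather than to a sum of errors along the whole triangle. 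Antisymmetrizing $\Phi_C$ and using the $\delta$-centre of a geodesic triangle then gives the stated defect bound $MC_0+MD(\varphi)$. So your outline would need to be restructured around this minimization (or an equivalent device that makes the choice of representative canonical up to $O_\delta(C_0+D(\varphi))$); as written, the key step fails.
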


    By definition of hyperbolically embedded subgroups (Definition \ref{DefHypEmb}) and Proposition \ref{Prop_CTviaRP}, if $K$ is hyperbolically embedded in $G$, then there exists a relative generating set $X$ such that the assumption of Theorem \ref{MainThm} is satisfied. Thus, Theorem \ref{MainThm} implies Theorem \ref{HOHypEmb} directly. Furthermore, the proof we give in this paper is much simpler than that in \cite{HO13}.

    Being $(G,X)$-controlled is an appropriate assumption for extendability, because it is necessary in many situations. 

    \begin{lem}[Lemma \ref{FiniteCase}]
        If $G$ is generated by $K$ and a finite set $X$, then an extendable quasimorphism on $K$ must be linearly $(G,X)$-controlled. 
    \end{lem}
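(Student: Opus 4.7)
The plan is to work directly with a given quasimorphism extension $\Phi \colon G \to \R$ of $\varphi$. Reading a cycle of length $L$ in $\Gamma(G, K \sqcup X)$ as a trivial word in $G$, the defect inequality for $\Phi$ produces a linear bound on the sum of $\Phi$-values along it. The finiteness of $X$ then lets us absorb contributions from $X$-letters into a uniform constant, and a final use of the defect of $\varphi$ converts the resulting sum of $\varphi$-values on $K$-letters into $\varphi$ of their product.

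Concretely, let $\Phi$ be the extension, with $D(\Phi) < \infty$. A cycle of length $L$ in $\Gamma(G, K \sqcup X)$, read off from a chosen basepoint and orientation, is a word $g_1 g_2 \cdots g_L$ with each $g_i \in K \cup X$ and $g_1 g_2 \cdots g_L = 1$ in $G$. Iterating the defect inequality, together with $|\Phi(1)| \le D(\Phi)$, gives $\bigl| \sum_{i=1}^L \Phi(g_i) \bigr| \le L \cdot D(\Phi)$. Let $k_1, \ldots, k_m$ be the $K$-letters in their order of appearance, and set $C_X := \max_{x \in X} |\Phi(x)|$, which is finite since $X$ is finite. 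The $X$-letter contributions to the sum are then bounded by $L C_X$, so
\[
\Bigl| \sum_{j=1}^m \varphi(k_j) \Bigr| \le L \bigl( D(\Phi) + C_X \bigr).
\]
One more application of the defect of $\varphi$ converts sum into product: $\bigl|\varphi(k_1 k_2 \cdots k_m) - \sum_j \varphi(k_j)\bigr| \le (m-1) D(\varphi) \le L \cdot D(\varphi)$. Combining, $|\varphi(k_1 k_2 \cdots k_m)| \le L \bigl( D(\Phi) + C_X + D(\varphi) \bigr)$, which is the desired linear bound.

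There is no substantive obstacle. The essential use of the hypothesis is the finiteness of $X$, without which the values $\Phi(x)$ on $X$-letters could be arbitrarily large and no linear bound could be hoped for. The dependence of the product $k_1 \cdots k_m$ on the basepoint and orientation of the cycle is harmless: cyclic rotation conjugates the product by an element of $K$, and inversion replaces it by its inverse; by the defect of $\varphi$ alone, both operations alter $\varphi$ on the product only by a bounded additive constant, preserving the linear character of the estimate.
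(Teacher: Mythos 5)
Your argument is correct and is essentially the paper's proof: the paper also takes the extension, applies its defect inequality letter-by-letter along the (normal form of the) trivial word, bounds the $X$-letter contributions by $\max_{x\in X}|\psi(x)|$ using finiteness of $X$, and uses the defect of $\varphi$ to pass from the sum $\sum_j\varphi(k_j)$ to $\varphi(k_1\cdots k_m)$. The only cosmetic difference is that the paper packages the computation via the pull-back quasimorphism $\tilde\psi$ on $K*F(X)$ rather than reading a cycle in $\Gamma(G,K\sqcup X)$, and your closing remarks about basepoint and orientation are unnecessary once one works directly with elements of $\ker(\theta)$ as in Definition \ref{Def_Contr}.
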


    In \cite[Theorem 1.9]{KKMMM25}, Kawasaki, Kimura, Maruyama, Matsushita, and Mimura give a sufficient condition on extendability of invariant quasimorphisms on a normal subgroup. It is required that the quotient group has trivial bounded cohomology in lower degree. In contrast to their result, Theorem \ref{MainThm} gives the following corollary, which is a special case of Corollary \ref{RHGQuo}. 
    
    \begin{cor}\label{HypQuo}
        Let $G$ be a group and $K$ a normal subgroup. If $G/K$ is hyperbolic, then any $\varphi\in Q_{as}(K)^G$ extends to $G$. 
    \end{cor}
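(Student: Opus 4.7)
My plan is to apply Theorem \ref{MainThm} to a well-chosen finite relative generating set. Since $G/K$ is hyperbolic it is finitely generated; I would fix a finite symmetric generating set $\bar X$ of $G/K$ with $e \notin \bar X$ and lift it to a finite symmetric subset $X \subset G$, so that $K \sqcup X$ generates $G$. I then need to verify the two hypotheses of Theorem \ref{MainThm}: hyperbolicity of $\Gamma(G, K \sqcup X)$, and that every $\varphi \in Q_{as}(K)^G$ is $(G,X)$-controlled.

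For hyperbolicity, the quotient map $G \to G/K$ induces a graph morphism $\pi \colon \Gamma(G, K \sqcup X) \to \Gamma(G/K, \bar X)$ which collapses each $K$-coset (a clique of diameter $1$ in the source) to a single vertex and sends $X$-edges to $\bar X$-edges. A short argument gives $|d_G(g, h) - d_{G/K}(gK, hK)| \le 1$, so $\pi$ is a quasi-isometry and hyperbolicity transfers from $\Gamma(G/K, \bar X)$.

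The main step is the verification of the control condition. By vertex-transitivity I may base a given cycle of length $L$ in $\Gamma(G, K \sqcup X)$ at $1$. Reading labels produces a relation
\[
k_0 x_{i_1} k_1 x_{i_2} \cdots x_{i_n} k_n = 1 \quad \text{in } G,
\]
with $k_j \in K$, $x_{i_j} \in X$, and $n \le L$. Setting $y_j = x_{i_1} \cdots x_{i_j}$ and using normality of $K$, I rewrite this as
\[
k_0 (y_1 k_1 y_1^{-1}) \cdots (y_n k_n y_n^{-1}) = y_n^{-1},
\]
which in particular forces $y_n \in K$. Applying the defect inequality and $G$-quasi-invariance a total of $O(n)$ times would then yield an estimate of the form
\[
\bigl|\varphi(k_0 k_1 \cdots k_n) - \varphi(y_n^{-1})\bigr| \le C n,
\]
where $C$ depends only on $D(\varphi)$ and the quasi-invariance constant of $\varphi$. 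Finally, since $X$ is finite, the set of elements of $K$ that can be written as $X$-words of length at most $L$ is finite, so $|\varphi(y_n^{-1})|$ is bounded by some function $\psi(L)$ depending only on $L$, $X$, and $\varphi$. Combining these bounds gives the required control on the $\varphi$-value of the product of $K$-letters in terms of $L$, so $\varphi$ is $(G,X)$-controlled, and Theorem \ref{MainThm} supplies the extension.

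The main obstacle I expect is the rewriting step via conjugation: one must reorganize an interleaved word of $K$- and $X$-letters so that every $k_j$ is replaced by a conjugate, which still lies in $K$ by normality, and for which $G$-quasi-invariance controls the error when passing back to $\varphi(k_j)$. Once that identity is in hand, the rest is a termwise application of the defect inequality together with the observation that only finitely many elements of $K$ arise as $X$-words of bounded length. Note that I do not obtain \emph{linear} control in general, only control, which is exactly what the corollary asks for.
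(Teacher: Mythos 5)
Your proposal is correct, but it takes a genuinely different route from the paper. The paper obtains Corollary \ref{HypQuo} as a special case of Corollary \ref{RHGQuo}, whose engine is the presentation-theoretic criterion Proposition \ref{CTNormal}: one builds the relative presentation $G=\langle K,X\mid R_K,R,T'\rangle$ from a finite presentation $\langle\bar X\mid\bar R\rangle$ of the hyperbolic quotient (Lemma \ref{PreExtension}), transfers its linear isoperimetric function (Lemma \ref{LinearRIF}), and then applies Proposition \ref{Prop_CTviaRP}, where finiteness of $\bar R$ makes $\sup\{|\tilde\varphi(r)|\}$ finite and quasi-invariance handles the conjugation relators $T'$; this yields \emph{linear} control with explicit constants. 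You instead verify the control condition directly on cycles: the rewriting $k_0x_{i_1}k_1\cdots x_{i_n}k_n = k_0(y_1k_1y_1^{-1})\cdots(y_nk_ny_n^{-1})y_n$ is correct, normality keeps each conjugate in $K$, quasi-invariance and the defect give the $O(n)$ error, and finiteness of $X$ bounds $|\varphi(y_n^{-1})|$ by a (possibly nonlinear) function of $L$ — which suffices, since under the hyperbolicity hypothesis Theorem \ref{MainThm} (via Corollary \ref{LCTandCT}) only needs control. You also make explicit the quasi-isometry $\Gamma(G,K\sqcup X)\to\Gamma(G/K,\bar X)$ giving hyperbolicity of the relative Cayley graph, a point the paper leaves implicit in this corollary. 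What each approach buys: yours is more elementary and isolates the interesting fact that the control condition already follows from finite generation of $G/K$ together with quasi-invariance (hyperbolicity is used only for the Cayley graph hypothesis of Theorem \ref{MainThm}), whereas the paper's isoperimetric-function route produces linear control with quantitative constants and, more importantly, generalizes to relatively hyperbolic quotients and Dehn fillings (Corollary \ref{RHGQuo}, Theorem \ref{QMDGOFill}), where the relative generating set is infinite and your finiteness argument would not apply.
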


    For group-theoretic Dehn filling, we obtain the following. 
    
    \begin{thm}[Theorem \ref{QMDGOFill}]\label{QMDGOFill_Intro}
        Let $H$ be a hyperbolically embedded subgroup of $G$. There exists a finite subset $F\subset H-\{1\}$ such that the following holds. Let $N\lhd H$ be a subgroup such that $N\cap F=\emptyset$. Let $K$ be the normal closure of $N$ in $G$. Let $\varphi\in Q_{as}(K)^G$. If there exists a transversal $L$ of $N$ in $H$ such that the restricted quasimorphism $\varphi|_N$ is $(H,L)$-controlled, then $\varphi$ extends to $G$. 
    \end{thm}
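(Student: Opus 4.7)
The plan is to reduce Theorem \ref{QMDGOFill_Intro} to Theorem \ref{MainThm} via the Dahmani--Guirardel--Osin (DGO) Dehn filling theorem. Fix a finite relative generating set $Y$ witnessing the hyperbolic embedding $H \hookrightarrow_h G$, and let $F \subset H - \{1\}$ be the finite subset produced by the DGO filling theorem applied to this data. For any normal $N \lhd H$ with $N \cap F = \emptyset$, the DGO theorem ensures that $K \cap H = N$ and that $H/N \hookrightarrow_h G/K$ with relative generating set $\bar Y$, the image of $Y$ in $G/K$; in particular $\Gamma(G/K, (H/N) \sqcup \bar Y)$ is $\delta'$-hyperbolic for some $\delta' \geq 0$.

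I would then take $X := Y \cup L$ as the relative generating set of $G$ modulo $K$. Since every element of $H$ factors as $n\ell$ with $n \in N \subseteq K$ and $\ell \in L$, and $G$ is generated by $H \cup Y$, the set $K \cup X$ generates $G$. The projection $G \to G/K$ induces a Lipschitz graph morphism $\pi \colon \Gamma(G, K \sqcup X) \to \Gamma(G/K, (H/N) \sqcup \bar Y)$ that collapses each $K$-coset (a clique of diameter at most one in the source) to a vertex, sends $Y$-edges to $\bar Y$-edges, and, via the bijection $L \to H/N$, sends $L$-edges bijectively to $(H/N)$-edges. Collapsing uniformly bounded fibres is a quasi-isometry, so hyperbolicity descends to $\Gamma(G, K \sqcup X)$.

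The main step is to verify that $\varphi$ is $(G,X)$-controlled. Given a cycle $c$ of length $L$ in $\Gamma(G, K \sqcup X)$, its projection $\bar c$ is a cycle of length at most $L$ in $\Gamma(G/K, (H/N) \sqcup \bar Y)$. Exploiting the hyperbolic embedding $H/N \hookrightarrow_h G/K$, I decompose $\bar c$ into its maximal $(H/N)$-components and the intervening $\bar Y$-edges; each $(H/N)$-component lifts to a subword of $c$ involving only $K$- and $L$-letters whose product lies in a single translate $gH$. Translating by the prefix $g$ (and using $G$-quasi-invariance together with antisymmetry of $\varphi$ to absorb each conjugation linearly in $L$), each lifted subword becomes a cycle in $\Gamma(H, N \sqcup L)$. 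The assumed $(H, L)$-control of $\varphi|_N$ then bounds the $\varphi$-value of its $N$-content; summing over the components yields the required bound on the total $K$-content of $c$, giving $(G,X)$-controlledness. Theorem \ref{MainThm} now produces the desired extension $\Phi \in Q_{as}(G)$.

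The principal obstacle is this decomposition step: one must carefully partition a cycle in $\Gamma(G, K \sqcup X)$ into pieces that close up inside $\Gamma(H, N \sqcup L)$ after translation, and control all conjugation errors via $G$-quasi-invariance, so that the $(H,L)$-controlled hypothesis on $\varphi|_N$ genuinely translates into a $(G,X)$-controlled bound on $\varphi$. The DGO machinery governing $(H/N)$-components in $G/K$ should make the translation step quantitatively feasible and provide the polynomial/linear form of the control function inherited from the one on $\varphi|_N$.
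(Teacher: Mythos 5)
Your overall architecture is the same as the paper's: invoke the Dahmani--Guirardel--Osin filling theorem to get $F$, $K\cap H=N$, and a hyperbolically embedded image $\bar H=H/N$ in $\bar G=G/K$; take $X=Y\cup L$ as the relative generating set of $G$ over $K$; observe that $\Gamma(G,K\sqcup X)$ collapses onto $\Gamma(\bar G,\bar H\sqcup\bar Y)$ with clique fibres, hence is hyperbolic; and then try to apply Theorem \ref{MainThm}. The gap is in the central step, the verification that $\varphi$ is $(G,X)$-controlled, and the decomposition you sketch does not work as stated. First, a lifted $(\bar H)$-component of a cycle $c$ is not itself a cycle: its endpoints only lie in a common coset of $HK=\pi^{-1}(\bar H)$, so after translating by a prefix it represents a typically nontrivial element of $HK$, not the identity of $H$; there is nothing to feed into a relation of $\Gamma(H,N\sqcup L)$. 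Second, even if a component did close up, its $K$-letters are arbitrary elements of $K$ (every nontrivial element of $K$ labels an edge of $\Gamma(G,K\sqcup X)$), not elements of $N$, so the resulting word is not a word over $N\sqcup L$ and the hypothesis that $\varphi|_N$ is $(H,L)$-controlled simply does not apply to it; moreover $K$-edges of $c$ sitting between two $Y$-edges are invisible in the projection and are not captured by any component. You flag this decomposition as ``the principal obstacle,'' but appealing to ``DGO machinery governing components'' does not close it: the component theory in \cite{DGO17} concerns geodesic polygons, while here $c$ is an arbitrary cycle, and quasi-invariance only absorbs conjugations, not the failure of the pieces to be relations over $N\sqcup L$.

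The paper avoids analyzing arbitrary cycles altogether, and this is the idea your proposal is missing. By Proposition \ref{Prop_CTviaRP}, to get linear $(G,X)$-control it suffices to bound $|\tilde\varphi|$ on the relators of some relative presentation of $G$ over $K$ and $X$ admitting a linear relative isoperimetric function; such a presentation is obtained by lifting the strongly bounded presentation of $\bar G$ furnished by Theorem \ref{DGOFill}(2) together with Lemma \ref{RelPresent}, via Lemma \ref{LinearRIF} and Proposition \ref{CTNormal}. The relators then fall into two families that can actually be bounded: lifts of the multiplication-table relators of $\bar H$, which (because $K\cap H=N$) become length-four words over $L\sqcup N$ representing $1$ in $H$, where the $(H,L)$-control of $\varphi|_N$ applies; and lifts of $\pi(R)$, whose $K$-content is a bounded product of conjugates of the finitely many elements $n=s(\pi(h))^{-1}h$ coming from the $H$-letters occurring in $R$ (here strong boundedness is essential), which is handled by $G$-quasi-invariance. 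If you reroute your ``main step'' through Proposition \ref{Prop_CTviaRP} and Proposition \ref{CTNormal} in this way, the rest of your outline (the choice of $X$ and the quasi-isometry argument giving hyperbolicity of $\Gamma(G,K\sqcup X)$, which the paper leaves implicit) fits together correctly.
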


    By applying Theorem \ref{QMDGOFill_Intro} to mapping class groups, we obtain new examples of hierarchically hyperbolic groups, and thus answer a question asked by Fournier-Facio, Mangioni, and Sisto \cite[Question 4.2]{FMS25}. 

    \begin{cor}[Corollary \ref{QuoMCG}]
        Let $\Sigma$ be a finite-type surface with boundary. Let $g$ be a pseudo-Anosov mapping class of $\Sigma$. There exists $N\ge 1$ such that the following holds. For any $k\in\N$, let $K$ be the normal closure of $g^{kN}$ in $G$. The quotient group $\mcg(\Sigma)/K$ is hierarchically hyperbolic. 
    \end{cor}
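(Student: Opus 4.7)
The plan is to combine Theorem \ref{QMDGOFill_Intro} with the hierarchical hyperbolicity criterion of Fournier-Facio, Mangioni, and Sisto \cite{FMS25} for DGO Dehn fillings. First I would set up the hyperbolic embedding: since $g$ is pseudo-Anosov on a finite-type surface, the maximal elementary subgroup $H = E_G(g)$ of $G = \mcg(\Sigma)$ containing $g$ is virtually cyclic and hyperbolically embedded in $G$, by the Bestvina--Fujiwara construction recast in the Dahmani--Guirardel--Osin framework. Applying Theorem \ref{QMDGOFill_Intro} to this pair produces a finite exceptional set $F \subset H\setminus\{1\}$.

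Next I would choose $N$: pick $N$ so that $g^N$ lies in the center of $H$ (possible since $\langle g\rangle$ has finite index in $H$) and so that no nontrivial power of $g^N$ lies in $F$ (possible since $F$ is finite). Then for every $k\ge 1$, the cyclic subgroup $N_k := \langle g^{kN}\rangle$ is normal in $H$, disjoint from $F$, and admits a finite transversal $L$ in $H$.

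The crux is verifying the $(H,L)$-control hypothesis. For any antisymmetric $G$-quasi-invariant quasimorphism $\varphi$ on the normal closure $K$ of $g^{kN}$ in $G$, the restriction $\varphi|_{N_k}$ is a quasimorphism on an infinite cyclic group, hence at bounded distance from a homomorphism $N_k \to \R$. Since $N_k$ lies in the center of $H$, along any cycle in $\Gamma(H, N_k \sqcup L)$ one may permute labels freely so that all $N_k$-letters gather on one side; their product equals the inverse of the product of the $L$-letters, which is an element of $H$ of word length at most the cycle length $\ell$ in the finite set $L$, projecting to $N_k \cong \Z$ with magnitude at most linear in $\ell$. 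Consequently the $\varphi$-value of the $N_k$-product is $O(\ell)$, so $\varphi|_{N_k}$ is linearly $(H,L)$-controlled, and Theorem \ref{QMDGOFill_Intro} yields an extension $\Phi \in Q_{as}(G)$.

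Finally, I would invoke the FMS criterion \cite{FMS25} characterizing when a DGO filling of an HHG remains HHG, combined with the acylindrical hyperbolicity of $\mcg(\Sigma)/K$ (a standard consequence of the DGO theorem for sufficiently large $N$). The quasimorphism extension property established above verifies the criterion, yielding that $\mcg(\Sigma)/K$ is hierarchically hyperbolic. The main obstacle I anticipate is the bookkeeping to match the $(H,L)$-controlled hypothesis precisely against the requirements of Theorem \ref{QMDGOFill_Intro} and the FMS criterion; the hyperbolic embedding of $E_G(g)$ and the existence of a suitable $N$ are standard.
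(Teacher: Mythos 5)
Your first half---the hyperbolic embedding of the maximal elementary subgroup $E_G(g)$, the choice of $N$ avoiding the finite set $F$, and the verification that $\varphi|_{\langle g^{kN}\rangle}$ is linearly $(H,L)$-controlled---matches the paper's Theorem \ref{FillpA} almost exactly; your centrality/undistortion argument for the control is a harmless variant of the paper's appeal to Corollary \ref{Qsplit} (finite-index cyclic subgroup, so the sequence quasi-splits), and Theorem \ref{QMDGOFill} then gives the extension just as in the paper.

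The gap is in the last step, where you pass from the quasimorphism extension to hierarchical hyperbolicity. There is no ``FMS criterion characterizing when a DGO filling of an HHG remains HHG'' that takes as input an extension of a quasimorphism on $K\lhd\mcg(\Sigma)$ together with acylindrical hyperbolicity of the quotient; acylindrical hyperbolicity plays no role here. The actual mechanism, and the reason the boundary case was open, is the capping central extension $1\to T_{\Gamma}\to \mcg(\Sigma)\to\mcg(\hat\Sigma,P)\to 1$: since $K\cap T_{\Gamma}$ is trivial \cite[Proposition 4.4]{FMS25}, one gets the quotient central extension (\ref{QuoCent}) of $\mcg(\hat\Sigma,P)/\pi(K)$ by $T_{\Gamma}$. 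What \cite[Proposition 2.16]{FMS25} reduces the problem to is the extendability of a certain antisymmetric quasi-invariant quasimorphism on $\pi(K)\lhd\mcg(\hat\Sigma,P)$, i.e.\ in the \emph{capped} (punctured, boundaryless) mapping class group, not on $K\lhd\mcg(\Sigma)$ as in your proposal; this is where Theorem \ref{FillpA} is applied, yielding a bounded Euler class for (\ref{QuoCent}). One then needs two further external inputs that your outline omits: the base quotient $\mcg(\hat\Sigma,P)/\pi(K)$ is hierarchically hyperbolic by \cite[Theorem 6.2]{BHS17a} (this is where the ``filling of an HHG is an HHG'' content actually lives, and it is only available for the boundaryless surface), and a central extension of an HHG with bounded Euler class is again an HHG by \cite[Theorem 5.14]{AHPZ23}. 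Without routing the argument through this central-extension picture, the quasimorphism extension you produced does not by itself yield hierarchical hyperbolicity of $\mcg(\Sigma)/K$.
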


    Motivated by \cite{HO13} and \cite{FPS15}, we ask the following question.

    \begin{que}
        Can Theorem \ref{MainThm} be generalized to quasicocycles with more general coefficients or of higher degree in some sense?
    \end{que}

\subsection*{Structure of the paper}
    In Section \ref{Pre}, we recall some background on quasimorphisms and relative presentations. 
    In Section \ref{Sec_CTviaRP}, we introduce the core definition of this paper, i.e., controlled quasimorphisms and provide a partial characterization of them using relative presentations. Then we prove Theorem \ref{MainThm} in Section \ref{ProofMain}. In Section \ref{Sec_Normal}, we focus on normal subgroups. We first give a criterion for being a controlled quasimorphism on a normal subgroup and then prove Theorem \ref{QMDGOFill_Intro} and its applications to mapping class groups. Finally, we introduce an application to (mixed) stable commutator length. 

\subsection*{Acknowledgement}
    The author is grateful to Francesco Fournier-Facio, Koji Fujiwara, Yusen Long, Giorgio Mangioni, Alexis Marchand, Shuhei Maruyama, Alessandro Sisto, and Renxing Wan for helpful discussions. This work was supported by JST SPRING, Grant Number JPMJSP2110. 

\section{Preliminaries}\label{Pre}
\subsection{Quasimorphisms}
    Let $\hat Q(G)$ denote the set of all quasimorphisms on $G$. 
    Let $K<G$ and $\varphi\in\hat Q(K)$. If $\varphi$ extends to a quasimorphism $\Phi\in\hat Q(G)$, we have 
    \[|\Phi(ghg^{-1})-\Phi(h)|\le |\Phi(g)+\Phi(g^{-1})|+2D(\Phi)\le4D(\Phi)\]
    for any $g,h\in G$. Restricting to $K$, we know that 
    \begin{align}\label{Qinv}
        D^G(\varphi):=\sup \{|\varphi(k_1)-\varphi(k_2)|\mid k_1,k_2\in K \text{ that are conjugate in }G\}<\infty.
    \end{align}
    Thus, (\ref{Qinv}) gives a necessary condition for extendability of $\varphi$. 
    We say that a quasimorphism $\varphi\in\hat Q(K)$ is $G$-\emph{quasi-invariant} if it satisfies (\ref{Qinv}). Let
    \[\hat Q(K)^G:=\{\text{all $G$-quasi-invariant quasimorphisms on }G\}.\]
    The above discussion shows that there is a restricting map
    \[\hat r:\hat Q(G)\to \hat Q(K)^G.\]
    Therefore, Question \ref{QMProb} is essentially asking what the image of $\hat r$ is. Note that we do not explicitly require quasi-invariance in Theorem \ref{MainThm}. This is because controlled quasimorphisms are always quasi-invariant (see Lemma \ref{CTandQI}). 

    A quasimorphism $\varphi$ on $G$ is \emph{antisymmetric} if $\varphi(g^{-1})=-\varphi(g)$ for any $g\in G$. We write $Q_{as}(G)$ to mean the set of all antisymmetric quasimorphisms on $G$ and write $Q_{as}(K)^G$ to mean the subset of all $G$-quasi-invariant ones.

\subsection{Relative presentations}
    For any generating set $Z$ of a group $G$, we write $Z^*$ to mean the set of finite words on $Z$. For any $w_1,w_2\in Z^*$, we write $w_1=_G w_2$ to mean that $w_1$ and $w_2$ represents the same element in $G$. 
    
    A \emph{relative presentation} of $G$ with respect to a subgroup $K$ and a relative generating set $X$ is a presentation of the form
    \begin{align}\label{RelPre}
        G=\langle K,X\mid R_K,R\rangle,
    \end{align}
    where $R_K\subset K^*$ is the set of all words on $K$ that represent the identity of $K$. 
    A word on $K\sqcup X$ is \emph{reduced} if it does not contain a subword of the form $k_1k_2$ with $k_1,k_2\in K-\{1\}$ or a subword of the form $xx^{-1}$ with $x\in X$. Throughout this paper, we can always assume that $R$ only contains reduced words. Thus, any word in $R$ can be identified with an element in $K*F(X)$. 
    
    For any $a\in K*F(X)$, we write $|a|$ to mean the word length of $a$ on the generating set $K\sqcup X$. The unique word on $K\sqcup X$ whose length achieves $|a|$ is called the \emph{normal form} of $a$.

    Let $\theta:K*F(X)\to G$ be the natural projection. Clearly, $\ker(\theta)$ is the normal closure of $R$ in $K*F(X)$.
    A function $f:\N\to \R$ is called a \emph{relative isoperimetric function} of (\ref{RelPre}) if for every element $a\in \ker(\theta)$ with $|a|\le n$, there exists $m\le f(n)$ and an expression 
    \[a=\prod_{i=1}^{m}h_ir_ih_i^{-1},\]
    where $h_i\in K*F(X)$, $r_i\in R$. 

    \begin{defn}
        The relative presentation (\ref{RelPre}) is \emph{bounded} if the words from $R$ have uniformly bounded lengths. 
        The relative presentation (\ref{RelPre}) is \emph{strongly bounded} if it is bounded and the set of letters from $K$ appearing in words from $R$ is finite. 
        The relative presentation (\ref{RelPre}) is \emph{finite} if $R$ is finite. 
    \end{defn}

    Note that a finite relative presentation is always strongly bounded. We refer to \cite{DGO17} for more discussions. In particular, there is a characterization for weak hyperbolicity using bounded relative presentations as below. 
    
    \begin{defn}
        We say that $G$ is \emph{weakly hyperbolic} relative to $K$ and $X$ if the Cayley graph $\Gamma(G,K\sqcup X)$ is hyperbolic. 
    \end{defn}

    \begin{lem}\cite[Lemma 4.9]{DGO17}\label{RelPresent}
        Let $K<G$. Let $X$ be a relative generating set of $G$ with respect to $K$. The following are equivalent. 
        \begin{enumerate}
            \item $G$ is weakly hyperbolic relative to $K$ and $X$. 
            \item There exists a bounded relative presentation $G=\langle K,X\mid R_K,R\rangle$ with a linear relative isoperimetric function. 
            \item There exists a bounded presentation $G=\langle K,X\mid R_K',R\rangle$ with a linear isoperimetric function in the classical sense, where $R_K'\subset R_K$ consists of the words of length at most $3$ and thus can be seen as the multiplication table of $K$. 
        \end{enumerate}
    \end{lem}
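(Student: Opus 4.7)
The plan is to prove the cycle $(3) \Rightarrow (2) \Rightarrow (1) \Rightarrow (3)$. The pair $(1) \Leftrightarrow (3)$ is essentially Gromov's classical identification of Cayley-graph hyperbolicity with a linear Dehn function, adapted to the (possibly infinite) generating set $K \sqcup X$ under the boundedness hypothesis on relators; the pair $(2) \Leftrightarrow (3)$ amounts to translating between two bookkeeping conventions for the $K$-relations, namely imposing all of $R_K$ tautologically (working inside $K * F(X)$) versus imposing only the length-$\le 3$ multiplication-table relators $R_K'$ (working inside $F(K \sqcup X)$).

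For $(3) \Rightarrow (2)$: given $a \in \ker(\theta) \subset K * F(X)$ with $|a| \le n$, take its normal form, which is a word of length $n$ in $F(K \sqcup X)$ trivial in $G$. Apply the classical filling from (3) to write it as a product of at most $Cn$ conjugates of elements of $(R_K' \cup R)^{\pm 1}$ in $F(K \sqcup X)$. Project to $K * F(X)$: every $R_K'$-factor collapses to the identity (since these relators are precisely entries of the multiplication table of $K$), leaving only the at most $Cn$ factors involving elements of $R^{\pm 1}$. This is the desired relative filling of $a$.

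For $(2) \Rightarrow (3)$: given a word $w \in F(K \sqcup X)$ of length $n$ that is trivial in $G$, first reduce $w$ to its normal form $\bar w$ in $K * F(X)$ by iteratively merging consecutive $K$-letters via $R_K'$-relators; each merge is one $R_K'$-application and strictly decreases the $K$-letter count, so only $O(n)$ applications are needed. Then $\bar w$ has $K * F(X)$-length at most $n$ and lies in $\ker(\theta)$, so (2) supplies a filling of $\bar w$ by at most $Cn$ conjugates of $R^{\pm 1}$. Lifting this filling back to $F(K \sqcup X)$ requires additional $R_K'$-relators to mediate between equality in $K * F(X)$ and equality in $F(K \sqcup X)$; using bounded relators in $R$ and bounded-size $R_K'$-relators, a van Kampen diagram argument packages this into an $O(n)$ classical filling.

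For $(1) \Leftrightarrow (3)$: the direction $(1) \Rightarrow (3)$ uses the standard triangulation argument for $\delta$-hyperbolic spaces, which, after decomposing any loop into bounded-perimeter sub-loops, lets us take $R$ to consist of all trivial words of length bounded by a function of $\delta$ (yielding a bounded presentation with linear area). The direction $(3) \Rightarrow (1)$ is Gromov's inductive van Kampen argument deducing $\delta$-thin triangles from a linear area bound; boundedness of the presentation plays the role of finiteness of the generating set in the classical statement. The main subtle point, and what I would expect to require most care, is verifying that these classical arguments go through uniformly when $|K \sqcup X|$ is infinite but the relators are uniformly bounded in length, and that the linear bounds survive the bookkeeping in the step $(2) \Leftrightarrow (3)$ without accumulating the lengths of conjugating elements; this is exactly what is carried out in \cite{DGO17}.
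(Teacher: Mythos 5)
First, note that the paper does not prove this statement at all: it is quoted verbatim from \cite[Lemma 4.9]{DGO17}, so there is no in-paper argument to compare your route against; your sketch has to be judged on its own. Of your four arrows, three are fine as sketches: $(3)\Rightarrow(2)$ by projecting a classical filling to $K*F(X)$ (the $R_K'$-factors die in the free product) is correct and easy; $(1)\Rightarrow(3)$ by filling loops in a hyperbolic graph with boundedly many bounded loops, and $(3)\Rightarrow(1)$ by the Gromov/Bowditch-type argument, indeed only need bounded relator length rather than local finiteness, as you say.

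The genuine gap is in $(2)\Rightarrow(3)$, which in your scheme is the \emph{only} arrow leaving $(2)$, so it is load-bearing. Hypothesis $(2)$ gives, for a trivial word of length $n$, an expression $\bar w=\prod_{i=1}^m h_i r_i h_i^{-1}$ in $K*F(X)$ with $m\le Cn$, but with \emph{no control whatsoever on the lengths $|h_i|$ of the conjugators}. When you lift this identity to $F(K\sqcup X)$, the number of $R_K'$-cells needed to witness equality with $w$ is bounded only by the total length of the conjugated product, i.e.\ by roughly $n+\sum_i\bigl(2|h_i|+|r_i|\bigr)$, which is not $O(n)$. Your phrase ``a van Kampen diagram argument packages this into an $O(n)$ classical filling'' asserts exactly the quantitative statement that needs proof, and your closing sentence concedes as much by deferring it to \cite{DGO17}: the crux of the lemma is the part you did not prove. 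A workable repair is to abandon $(2)\Rightarrow(3)$ and instead prove $(2)\Rightarrow(1)$ directly, adapting the ``linear isoperimetric inequality implies hyperbolicity'' argument to van Kampen diagrams over the relative presentation, where the key point is that the boundary of any $K$-cell has diameter at most $1$ in $\Gamma(G,K\sqcup X)$, so such cells are metrically negligible and only the linearly many $R$-cells matter; one then recovers $(3)$ from $(1)$. Alternatively one must first show that the conjugators in a relative filling can be re-chosen with relative length linear in $n$ (shortcutting across $K$-cells), but either way this is a genuine argument that the sketch currently lacks.
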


    For strongly bounded relative presentations, we recall the following. 
    
    \begin{defn}\cite[Theorem 4.24]{DGO17}\label{DefHypEmb}
        A subgroup $H<G$ is \emph{hyperbolically embedded} with respect to a subset $X\subset G$ if there exists a strongly bounded relative presentation $G=\langle H,X\mid R_H,R\rangle$ with a linear relative isoperimetric function. 
    \end{defn}

    We write $H\hookrightarrow_h (G,X)$ to mean that $H$ is a hyperbolically embedded subgroup of $G$ with respect to $X$. We also write $H\hookrightarrow_h G$ to mean that $H\hookrightarrow_h (G,X)$ for some $X\subset G$.

    For finite relative presentations, we recall the following definition of relatively hyperbolic groups given by Osin \cite{Osi06}, which is equivalent to that of Farb's \cite{Far98}. 
    
    \begin{defn}\label{DefRHG}
        We say that $G$ is \emph{hyperbolic} relative to a subgroup $K$ and a relative generating set $X$ if there exists a finite relative presentation $G=\langle K,X\mid R_K,R\rangle$ with a linear relative isoperimetric function. 
    \end{defn}

\section{Controlled quasimorphisms}
\label{Sec_CTviaRP}
    We are going to introduce the core definition of this paper, i.e., controlled quasimorphisms. 
    Throughout this section, let $G$ be a group and $K$ a subgroup. Let $X$ be a (symmetric) relative generating set of $G$ with respect to $K$. Let $F(X)$ be the free group with the basis $X$. There are two natural projections 
    \[K\xleftarrow{\eta}K*F(X)\xrightarrow{\theta}G,\]
    where $\eta$ maps every $x\in X$ to the identity and every $k\in K$ to itself, and $\theta$ is given by the assumption that $K\sqcup X$ generates $G$.

    For any $\varphi\in \hat Q(K)$, let $\tilde\varphi$ be the pull-back of $\varphi$ via $\eta$. Specifically, \[\tilde\varphi(a):=\varphi(\prod_{i=1}^n k_i),\]
    where $k_i$'s are the $K$-letters that appear in the normal form of $a\in K*F(X)$ in order. Clearly, $\tilde\varphi$ is a quasimorphism on $K*F(X)$ with the same defect as $\varphi$.

    \begin{defn}\label{Def_Contr}
        Let $f:\N\to \R$ be a function and $\varphi\in \hat Q(K)$. 
        We say that $\varphi$ is $(G,X)$-\emph{controlled} by $f$ if for any $a\in \ker(\theta)$ with $|a|\le n$, we have $|\tilde\varphi(a)|\le f(n)$. We say that $\varphi$ is \emph{linearly} $(G,X)$-controlled with a constant $C_0$ if $\varphi$ is $(G,X)$-controlled by a linear function $f(n)=C_0n+C_0$.
    \end{defn}

    First, we explain the motivation behind the definition of controlled quasimorphisms. 
    Let
    \begin{align}\label{CentrExt}
        1\to \Z\xrightarrow{\iota} E\xrightarrow{\pi} G\to 1.
    \end{align}
    be a central extension of groups. Such an extension defines a cohomology class $[\omega]\in H^2(G,\Z)$ as follows. A \emph{(set-theoretic) section} is a map $s:G\to E$, not necessarily a homomorphism, such that $\pi\circ s=1_G$. Choose any section $s:G\to E$. 
    Let $\omega_s$ be a $2$-cochain on $G$ with $\Z$-value defined by 
        \[\omega_s(g_1,g_2):=s(g_1)s(g_2)s(g_1g_2)^{-1}.\]
    It is known that $\omega_s$ is in fact a $2$-cocycle, and the corresponding $2$-coclass $[\omega_s]\in H^2(G,\Z)$ does not depend on the choice of $s$. This $2$-coclass is called the \emph{Euler class} of the central extension (\ref{CentrExt}). 
    
    Let $\omega:G^2\to\Z$ be $2$-cochain. We say that $\omega$ is     
    \begin{itemize} 
        \item \emph{bounded} if the image $\omega(G,G)$ is bounded.
        \item \emph{weakly bounded} if $\omega(g,G)$ and $\omega(G,g)$ are both bounded for all $g\in G$. 
    \end{itemize}
    Correspondingly, a $2$-coclass in $H^2(G,\Z)$ is called \emph{bounded} (resp. \emph{weakly bounded}) if it is represented by a bounded (resp. weakly bounded) $2$-cocycle. For more background on bounded and weakly bounded Euler classes, we refer to \cite{FS23}. 
    The following lemma is known (see \cite[Proposition 2.9]{FMS25} or \cite[Lemma 5.6]{TW25}). 

    \begin{lem}\label{BddEu}
        The central extension (\ref{CentrExt}) has a bounded Euler class if and only if there is a quasimorphism $\Phi:E\to \R$ such that $\Phi\circ \iota=1_{\Z}$. 
    \end{lem}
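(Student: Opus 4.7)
The plan is to explicitly match up the cocycle and quasimorphism pictures via a set-theoretic section. Fix $s:G\to E$ with $s(1)=1$. Because $\iota(\Z)$ is central, every $e\in E$ can be written uniquely as $e=s(\pi(e))\cdot\iota(n_e)$ for some $n_e\in\Z$, and a short computation from $s(g_1)s(g_2)=\iota(\omega_s(g_1,g_2))\cdot s(g_1g_2)$ yields the key identity
\[n_{e_1e_2}=n_{e_1}+n_{e_2}+\omega_s(\pi(e_1),\pi(e_2)).\]
This identity is the translator between the two sides: the integer-valued function $e\mapsto n_e$ fails to be a homomorphism exactly by the cocycle $\omega_s$ pulled back to $E$ via $\pi$.

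For the forward direction, I would assume $[\omega_s]$ is bounded, pick a $1$-cochain $\beta:G\to\R$ and a bounded cocycle $\omega'$ with $\omega_s=\omega'+d\beta$, and define $\Phi(e):=n_e+\beta(\pi(e))-\beta(1)$. Plugging the key identity into $\Phi(e_1)+\Phi(e_2)-\Phi(e_1e_2)$ shows this difference equals $-\omega'(\pi(e_1),\pi(e_2))$, hence is bounded; and $s(1)=1$ together with the normalization by $\beta(1)$ gives $\Phi\circ\iota=1_\Z$ on the nose.

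For the converse I would set $\beta(g):=\Phi(s(g))$ and apply the defect inequality of $\Phi$ to the factorization $e=s(\pi(e))\iota(n_e)$, using $\Phi\circ\iota=1_\Z$ to extract the integer $n_e$; this gives $|\Phi(e)-n_e-\beta(\pi(e))|\le D(\Phi)$ uniformly in $e$. Substituting this bound into the defect inequality of $\Phi$ applied to a pair $(e_1,e_2)$ and invoking the key identity, one obtains $|\omega_s(\pi(e_1),\pi(e_2))-d\beta(\pi(e_1),\pi(e_2))|\le CD(\Phi)$ for a universal constant $C$; hence $\omega_s-d\beta$ is a bounded cocycle representing $[\omega_s]$.

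The only delicate bookkeeping is the repeated use of centrality of $\iota(\Z)$, which legitimizes the unique decomposition $e=s(\pi(e))\iota(n_e)$ and allows $\iota$-factors to commute freely past section values; once the key identity is established both implications reduce to short algebraic manipulations with the defect inequality, and I do not anticipate a serious obstacle. A minor but worth-noting point is coefficients: $\beta$ is naturally $\R$-valued, so the equivalence is most transparent when "bounded Euler class" is read through the comparison map $H^2_b(G,\R)\to H^2(G,\R)$, which is the standard convention in the references cited.
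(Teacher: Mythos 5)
Your argument is correct, and it is essentially the standard proof: the paper itself gives no proof of Lemma \ref{BddEu}, deferring to the cited references, which argue exactly along these lines (decompose $e=s(\pi(e))\iota(n_e)$ using centrality, and translate between the defect of $\Phi$ and the cocycle $\omega_s$ via the identity $n_{e_1e_2}=n_{e_1}+n_{e_2}+\omega_s(\pi(e_1),\pi(e_2))$). The one point to tighten is the coefficient issue you flagged at the end: the paper defines ``bounded Euler class'' inside $H^2(G,\Z)$, i.e.\ the class must admit a bounded \emph{integer-valued} representative, not merely a bounded representative after passing to $H^2(G,\R)$. This is closed by one more line in your converse: replace the $\R$-valued cochain $\beta(g)=\Phi(s(g))$ by an integer rounding $\beta'$ with $|\beta-\beta'|\le 1$; since $\omega_s$ is $\Z$-valued, $\omega_s-d\beta'$ is an integer-valued cocycle cohomologous to $\omega_s$ in $H^2(G,\Z)$, and it differs from the bounded cochain $\omega_s-d\beta$ by $d(\beta-\beta')$, which is bounded by a universal constant, so it is a bounded representative of the Euler class. (Similarly, in the forward direction $\beta$ may be taken $\Z$-valued from the start, and the harmless constant $\beta(1)$ in your computation only shifts the defect by a bounded amount.) With that addition the proof matches the intended statement exactly.
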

    
    Back to the 1990s, Neumann and Reeves \cite{NR97} proved that any such central extension with $G$ being hyperbolic has a bounded Euler class. A key step in their proof is to employ a maximizing technique similar to \cite{EF97} to prove the following, which is somehow implicit in their paper.

    \begin{lem}\label{WeakBdd}
        If the central subgroup $\Z$ is undistorted, then the Euler class of (\ref{CentrExt}) is weakly bounded. 
    \end{lem}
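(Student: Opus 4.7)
The strategy is to construct a set-theoretic section whose Euler cocycle is weakly bounded. Fix a symmetric generating set $X$ of $E$ containing $\iota(\pm 1)$, and let $C>0$ be the constant provided by the undistortion hypothesis, so that $|n|\le C\,|\iota(n)|_E$ for all $n\in\Z$. Define $s\colon G\to E$ by choosing, for each $g\in G$, a representative $s(g)\in\pi^{-1}(g)$ of minimum $E$-word length, with the normalization $s(1)=1$. Let $\Phi_0\colon E\to\Z$ be the associated coordinate function, characterized by $e=\iota(\Phi_0(e))\,s(\pi(e))$; the minimality of $s$ combined with undistortion gives $|\Phi_0(e)|\le 2C|e|_E$, so $\Phi_0$ is Lipschitz. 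A direct computation from the definition of $\omega_s$ yields $\omega_s(g_1,g_2)=\Phi_0(s(g_1)s(g_2))$, so the whole problem is packaged in the behavior of this one function.

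Thus the weak boundedness of $\omega_s$ reduces to the geometric claim that, for each fixed $g_1\in G$, the integer $\Phi_0(s(g_1)s(g_2))$ is bounded uniformly in $g_2\in G$, and symmetrically. The naive triangle-inequality bound $|\Phi_0(s(g_1)s(g_2))|\le 2C(|g_1|_G+|g_2|_G)$ is too weak. To sharpen it I would follow the maximizing strategy advertised just before the lemma, in the spirit of Epstein--Fujiwara: construct a real-valued function $\Phi\colon E\to\R$ satisfying $\Phi\circ\iota=\mathrm{id}_\Z$ by taking a suitable supremum over geodesic spellings of $e$ in $X^*$, and aim to show that $\Phi$ is a \emph{one-sided slice-wise quasimorphism}, meaning $\sup_{e_2\in E}|\Phi(e_1e_2)-\Phi(e_1)-\Phi(e_2)|<\infty$ for each fixed $e_1\in E$. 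Since any such $\Phi$ must differ from $\Phi_0$ by an additively bounded function, this would immediately produce $|\omega_s(g_1,g_2)|\le B(g_1)$ for some $B\colon G\to\R_{\ge 0}$.

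The main obstacle is the construction of this slice-wise $\Phi$. Upgrading it to a genuine (two-sided) quasimorphism would give the stronger bounded Euler class via Lemma~\ref{BddEu}, but that seems to require additional assumptions such as hyperbolicity of $G$, as in Neumann--Reeves. Under only undistortion the one-sided slice-wise bound should still be obtainable by a careful bookkeeping of how left multiplication by a fixed element $e_1\in E$ displaces the minimum-length fiber representatives, using undistortion of $\iota(\Z)$ and the minimality of $s$ alone; left multiplication by $e_1$ is a global isometry of $E$, so the only contribution to the defect is the recentering within the fiber, which is precisely what undistortion controls. The symmetric bound on $\omega_s(\cdot,g_2)$ then follows either by applying the same construction to the opposite multiplication, or by invoking the cocycle identity with $g_3=g_2^{-1}$ to trade one slice for the other.
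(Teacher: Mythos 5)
The paper itself records no proof of Lemma~\ref{WeakBdd} (it is attributed to the maximizing technique of \cite{NR97}, in the spirit of \cite{EF97}), so your attempt has to stand on its own, and as it stands it has a genuine gap: the entire content of the lemma is the construction of the function $\Phi$ with $\Phi\circ\iota=\mathrm{id}_\Z$ and the slice-wise defect bounds, and this is exactly the step you leave as an acknowledged ``obstacle,'' supported only by the heuristic that left multiplication is an isometry, so ``the only contribution to the defect is the recentering within the fiber, which is precisely what undistortion controls.'' Undistortion only converts the $E$-length of a central element into its $\Z$-value, and the relevant central element $s(g_1)s(g_2)s(g_1g_2)^{-1}$ has $E$-length growing linearly in $|g_2|$, so nothing is bounded for free; minimality of $s$ does not repair this. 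Concretely, even for the trivial extension $E=\Z\times F_2$ with generating set containing $(1,a)$ and $(0,a)$, a minimum-length section with adversarial tie-breaking (say $s(a^n)=(n,a^n)$ and $s(a^nb^m)=(0,a^nb^m)$) gives $\omega_s(a^n,b^m)=n$, which is not weakly bounded in the second slot even though the class is trivial: so ``minimality plus undistortion'' cannot control the recentering, and a coherent global choice --- i.e.\ the maximizing construction you defer --- is where the actual work lies. A separate error: the claim that any slice-wise $\Phi$ with $\Phi\circ\iota=\mathrm{id}_\Z$ differs from your $\Phi_0$ by a bounded function is false (for $E=\Z\times\Z$ take $\Phi(n,m)=n+m$, while $\Phi_0(n,m)=n$); this claim is fortunately unnecessary, since one should define the section directly from $\Phi$ rather than compare it with $\Phi_0$.

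To close the gap you can run the machinery of Section~\ref{ProofMain} with $K=\iota(\Z)$, $X$ a finite generating set of $E$, and $\varphi=\mathrm{id}_\Z$, which is linearly $(E,X)$-controlled precisely because $\iota(\Z)$ is undistorted; set $\Phi_C(e)=\inf\{\tilde\varphi(a)+C|a|_X\mid\theta(a)=e\}$. Lemmata \ref{MinExists}--\ref{Quasigeodesic} use no hyperbolicity; $\Phi_C$ restricts to the identity on $\iota(\Z)$ and satisfies $\Phi_C(\iota(n)e)=n+\Phi_C(e)$ exactly, because $\varphi$ is a homomorphism and $\iota(\Z)$ is central. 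In the proof of Lemma~\ref{WeakQM}, hyperbolicity enters only through the Morse lemma, used to bound $d(g,\beta)$; for the slice-wise statement one instead uses the trivial bound $d(g,\beta)\le\min\{d(1,g),\,d(g,gh)\}$, valid since $1$ and $gh$ are the endpoints of $\beta$. This yields $|\Phi_C(g)+\Phi_C(h)-\Phi_C(gh)|\le C\bigl(6\min\{d(1,g),d(g,gh)\}+5\bigr)$, a bound finite for each fixed $g$ uniformly in $h$ and for each fixed $h$ uniformly in $g$. Taking the section $\sigma(g)$ to be the unique element of $\pi^{-1}(g)$ with $\Phi_C\in[0,1)$ then gives $|\omega_\sigma(g_1,g_2)|$ bounded in terms of $\min$ of the two distances above, so $\omega_\sigma$ is a weakly bounded representative of the Euler class of (\ref{CentrExt}). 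This is presumably what ``implicit in \cite{NR97}'' means, and it is the piece your sketch is missing.
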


    Note that Lemma \ref{WeakBdd} gives an answer to \cite[Question 1.9]{FS23}. 
    Definition \ref{Def_Contr} is inspired by the use of undistortedness of $\Z$ in their proof. Indeed, assume that $E$ is finitely generated by $X$. It is easy to see that the identity map $1_{\Z}$ is linearly $(E,X)$-controlled if and only if the central subgroup $\Z$ is undistorted in $E$.

    The following lemma shows that a linear control is necessary for extendability if $X$ is finite. 
    
    \begin{lem}\label{FiniteCase}
        If $G$ is generated by $K$ and a finite set $X$, then an extendable quasimorphism on $K$ must be linearly $(G,X)$-controlled.
    \end{lem}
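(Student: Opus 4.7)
The proof will be a short, direct calculation using the defining property of the extension $\Phi$, plus the finiteness of $X$ to absorb the contributions of the $X$-letters into a linear term. Here is the plan.

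Suppose $\varphi\in\hat Q(K)$ extends to $\Phi\in\hat Q(G)$, and fix $a\in\ker(\theta)$ with $|a|\le n$. First I would write out the normal form of $a$ as a reduced word $u_1u_2\cdots u_m$ on $K\sqcup X$ with $m\le n$, letting $k_{i_1},\dots,k_{i_p}$ be the $K$-letters (in order) and noting that the remaining $m-p$ letters lie in the finite set $X$. Since $\theta(a)=1$ and any quasimorphism satisfies $|\Phi(1)|\le D(\Phi)$, the defect inequality for $\Phi$ applied telescopically to the product $u_1\cdots u_m=_G 1$ gives
\[
\Bigl|\sum_{i=1}^{m}\Phi(u_i)\Bigr|\le |\Phi(1)|+(m-1)D(\Phi)\le nD(\Phi)+D(\Phi).
\]

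Next I would split the sum according to whether a letter lies in $K$ or $X$. Set $C:=\max_{x\in X}|\Phi(x)|$, which is finite because $X$ is finite; then the $X$-contribution is bounded by $(m-p)C\le nC$. Using that $\Phi$ extends $\varphi$, so $\Phi(k_{i_j})=\varphi(k_{i_j})$, I conclude
\[
\Bigl|\sum_{j=1}^{p}\varphi(k_{i_j})\Bigr|\le nD(\Phi)+D(\Phi)+nC.
\]
Finally I would compare $\sum_j\varphi(k_{i_j})$ with $\varphi(k_{i_1}\cdots k_{i_p})=\tilde\varphi(a)$: applying the defect inequality for $\varphi$ (now on $K$) telescopically yields
\[
\bigl|\tilde\varphi(a)-\sum_{j=1}^{p}\varphi(k_{i_j})\bigr|\le(p-1)D(\varphi)\le nD(\varphi).
\]
Combining the two bounds gives $|\tilde\varphi(a)|\le n\bigl(D(\varphi)+D(\Phi)+C\bigr)+D(\Phi)$, which is linear in $n$, proving that $\varphi$ is linearly $(G,X)$-controlled with constant $C_0=D(\varphi)+D(\Phi)+C+D(\Phi)$ (or any convenient upper bound thereof).

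There is essentially no obstacle; the only thing to keep track of is that finiteness of $X$ is used precisely once, to ensure $C<\infty$, which is exactly the hypothesis of the lemma. Without it, the $X$-letter contributions could not be absorbed into a linear bound.
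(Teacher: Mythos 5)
Your proposal is correct and follows essentially the same argument as the paper's proof: telescope the defect of the extension over the letters of the normal form, use finiteness of $X$ to bound the $X$-letter contributions by a constant times $n$, use $\Phi|_K=\varphi$ for the $K$-letters, and compare with $\tilde\varphi(a)$ via the defect of $\varphi$. The only cosmetic difference is that the paper packages the first step through the pull-back of the extension along $\theta$, which changes nothing of substance.
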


    \begin{proof}
        Assume that $\varphi\in \hat Q(K)$ extends to $\psi\in \hat Q(G)$. Let $\tilde\psi\in \hat Q(K*F(X))$ be the pull-back of $\psi$ via $\theta$. Let $B:=\max\{|\psi(x)|\mid x\in X\}$. 

        Let $a\in \ker(\theta)$. Let $x_1,\dots,x_m$ and $k_1,\dots,k_n$ be the $X$-letters and the $K$-letters in the normal form of $a$ in order, respectively. Note that $|a|=m+n$. Since $\tilde\psi$ is a quasimorphism and $\psi|_K=\varphi$, we have
        \[|\tilde\psi(a)-\sum_{i=1}^n \varphi(k_i)-\sum_{j=1}^m \psi(x_j)|\le (m+n-1)D(\psi).\]
        By definition, 
        \begin{align*}
            |\tilde\varphi(a)-\sum_{i=1}^n \varphi(k_i)|&=|\varphi(\prod_{i=1}^n k_i)-\sum_{i=1}^n \varphi(k_i)|\\
            &\le (n-1)D(\varphi).
        \end{align*}
        Thus, 
        \begin{align*}
            |\tilde\psi(a)-\tilde\varphi(a)|&\le mB+(m+n-1)D(\psi)+(n-1)D(\varphi)\\
            &\le |a|(B+D(\psi)+D(\varphi)).
        \end{align*}
        The conclusion follows because $|\tilde\psi(a)|=|\varphi(1)|\le D(\varphi)$.
    \end{proof}
    
    When $X$ is allowed to be infinite, it is easy to construct counterexamples of Lemma \ref{FiniteCase}. Let $\Phi\in \hat Q(G)$. Assume that the restriction quasimorphism $\varphi=\Phi|_K$ is unbounded. Take a sequence of elements $\{k_n\}_{n\in\N}$ in $K$ such that $\varphi(k_n)\to \infty$. Fix any $x\in X$. We can produce a new relative generating set $X'$ from $X$ by adding the elements $x_n:=k_n^{-1}x\in G$ for every $n\in \N$. Now any element of the form $k_nx_nx^{-1}$ lies in $\ker(\theta)$ and has word length at most $3$, while $\tilde\varphi(k_nx_nx^{-1})\to \infty$ as $n\to \infty$. However, the following question does not seem obvious.

\begin{que}
    Does there exist a group $G$, a subgroup $K$, and an extendable quasimorphism $\varphi\in \hat Q(K)$ such that $\varphi$ is not linearly $(G,X)$-controlled for every relative generating set $X$?
\end{que}

    The following lemma shows that controlled quasimorphisms are quasi-invariant. We will not use it, but we prove it here for completeness of this paper. 

    \begin{lem}\label{CTandQI}
        For any $\varphi\in \hat Q(K)$, if $\varphi$ is $(G,X)$-controlled, then $\varphi$ is $G$-quasi-invariant with $D^G(\varphi)\le 2D(\varphi)$. 
    \end{lem}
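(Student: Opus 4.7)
The plan is to exploit, for any pair $k_1, k_2 \in K$ conjugate in $G$, a family of loops $a_n \in \ker(\theta)$ of uniformly bounded word length (independent of $n$); the $(G,X)$-control assumption then gives an $n$-uniform bound that survives homogenization and pins down $|\varphi(k_1) - \varphi(k_2)|$ to $2 D(\varphi)$.

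Write $k_2 = g k_1 g^{-1}$ with $g \in G$, fix a word on $K \sqcup X$ representing $g$, and let $\bar g \in K * F(X)$ be the corresponding element, so that $\theta(\bar g) = g$ and $h := \eta(\bar g) \in K$. For $n \ge 1$ set
\[
    a_n := \bar g \, k_1^n \, \bar g^{-1} \, k_2^{-n} \in K * F(X).
\]
Then $\theta(a_n) = (g k_1 g^{-1})^n k_2^{-n} = 1$, so $a_n \in \ker(\theta)$. Because $k_1^n$ and $k_2^{-n}$ are single letters of $K \sqcup X$, the length $|a_n|$ is at most $2|\bar g| + 2$ independently of $n$. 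By $(G,X)$-control there is therefore a constant $C = C(g)$ such that
\[
    |\varphi(h k_1^n h^{-1} k_2^{-n})| = |\tilde\varphi(a_n)| \le C \qquad \text{for all } n \ge 1.
\]

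Standard quasimorphism computations on $K$ yield, for every $x \in K$, the conjugation estimate $|\varphi(h x h^{-1}) - \varphi(x)| \le 2 D(\varphi)$; combining this (with $x = k_1^n$) with the defect inequality $|\varphi(h k_1^n h^{-1} k_2^{-n}) - \varphi(h k_1^n h^{-1}) - \varphi(k_2^{-n})| \le D(\varphi)$ and the bound above gives
\[
    |\varphi(k_1^n) + \varphi(k_2^{-n})| \le C + 3 D(\varphi).
\]
Dividing by $n$ and letting $n \to \infty$, the homogenization $\bar\varphi$ of $\varphi$, which is a homogeneous quasimorphism with $\|\varphi - \bar\varphi\|_\infty \le D(\varphi)$, satisfies $\bar\varphi(k_1) + \bar\varphi(k_2^{-1}) = 0$, hence $\bar\varphi(k_1) = \bar\varphi(k_2)$. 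The triangle inequality then gives $|\varphi(k_1) - \varphi(k_2)| \le 2 D(\varphi)$, which is the desired bound.

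The only non-routine step is the design of $a_n$: its word-length bound must be uniform in $n$ so that the $(G,X)$-control estimate survives division by $n$. This works precisely because the generating alphabet $K \sqcup X$ contains every element of $K$ as a single letter, so the powers $k_1^n$ and $k_2^{-n}$ each contribute only $1$ to $|a_n|$; without this feature one would obtain only a finite quasi-invariance constant depending on the control function, rather than the sharp estimate $2 D(\varphi)$.
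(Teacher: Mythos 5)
Your proof is correct and follows essentially the same route as the paper's: both consider the kernel elements $\bar g\,k_1^n\,\bar g^{-1}k_2^{-n}$, whose word length is bounded independently of $n$, apply the $(G,X)$-control, and let $n\to\infty$; your homogenization step is just a repackaging of the paper's direct asymptotic estimate. (A minor quibble: for a general, not necessarily antisymmetric $\varphi\in\hat Q(K)$ the conjugation bound should be $|\varphi(hxh^{-1})-\varphi(x)|\le 4D(\varphi)$ rather than $2D(\varphi)$, but since this constant is divided by $n$ it does not affect the conclusion.)
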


    \begin{proof}
        Let $k\in K$ and $g\in G$ such that $k'=_Ggkg^{-1}\in K$. Choose any $a\in \theta^{-1}(g)$. Consider the element $ak^na^{-1}k'^{-n}\in \ker(\theta)$, where $n\in \Z$. By definition, \begin{align*}
            |\tilde\varphi(ak^na^{-1}k'^{-n})|&=|\varphi(\eta(a)k^n\eta(a)^{-1}k'^{-n})|\\
            &\ge |\varphi(k^n)+\varphi(k'^{-n})|-5D(\varphi)\\
            &\ge n|\varphi(k)-\varphi(k')|-(2n+5)D(\varphi).
        \end{align*} 
        On the other hand, 
        \begin{align*}
            |\tilde\varphi(ak^na^{-1}k'^{-n})|&\le f(|ak^na^{-1}k'^{-n}|)\\
            &\le f(2|a|+2),
        \end{align*} 
        where $f$ is the function that controls $\varphi$. Combining the above inequalities and letting $n$ be arbitrarily large, we can conclude that $|\varphi(k)-\varphi(k')|\le 2D(\varphi)$, and thus finish the proof. 
    \end{proof}

    The following proposition gives a partial characterization of controlled quasimorphisms using relative presentations.

    \begin{prop}\label{Prop_CTviaRP}
        Let 
        \begin{align}\label{RelPre2}
            G=\langle K,X\mid R_K,R\rangle
        \end{align}
        be a relative presentation. Let $\varphi\in \hat Q(K)$.
        If (\ref{RelPre2}) has a relative isoperimetric function $f$ and $\sup\{|\tilde\varphi(r)|\mid r\in R\}$ is finite, then $\varphi$ is $(G,X)$-controlled by a constant multiple of $f$. 
        Conversely, if (\ref{RelPre2}) is bounded and $\varphi$ is $(G,X)$-controlled, then $\sup\{|\tilde\varphi(r)|\mid r\in R\}$ is finite. 
    \end{prop}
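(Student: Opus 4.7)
The plan is to handle the two directions separately; both reduce to the standard fact that a quasimorphism is nearly conjugation-invariant, applied to $\tilde\varphi \in \hat Q(K*F(X))$, which shares the defect $D(\varphi)$.

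For the forward direction, fix $a\in\ker(\theta)$ with $|a|\le n$ and use the relative isoperimetric function to write $a = \prod_{i=1}^{m} h_i r_i h_i^{-1}$ with $m\le f(n)$, $r_i\in R$, $h_i\in K*F(X)$. The key quasimorphism inequality, applied to $\tilde\varphi$, gives $|\tilde\varphi(hrh^{-1}) - \tilde\varphi(r)| \le |\tilde\varphi(h)+\tilde\varphi(h^{-1})| + 2D(\varphi) \le 4D(\varphi)$, where the last bound comes from $|\tilde\varphi(1)|\le D(\varphi)$ together with the defect inequality. Setting $C = \sup_{r\in R}|\tilde\varphi(r)|$, each conjugate satisfies $|\tilde\varphi(h_i r_i h_i^{-1})|\le C + 4D(\varphi)$. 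Iterating the defect inequality across the product of $m$ factors costs at most $(m-1)D(\varphi)$, yielding
\[
|\tilde\varphi(a)| \;\le\; m\bigl(C + 4D(\varphi)\bigr) + (m-1)D(\varphi) \;\le\; f(n)\bigl(C + 5D(\varphi)\bigr),
\]
which is a constant multiple of $f$.

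The converse direction is nearly immediate: boundedness of the presentation furnishes $L$ with $|r|\le L$ for every $r\in R$; since each $r$ lies in $\ker(\theta)$, the control hypothesis directly yields $|\tilde\varphi(r)|\le g(L)$ for the controlling function $g$, hence $\sup_{r\in R}|\tilde\varphi(r)|<\infty$.

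I do not expect serious obstacles. The only mild subtlety is that Definition \ref{Def_Contr} does not assume antisymmetry, so the estimate on $|\tilde\varphi(h)+\tilde\varphi(h^{-1})|$ must be derived from the defect bound at the identity rather than quoted as $0$; that is a one-line adjustment. Everything else is bookkeeping with the quasimorphism defect and the relative isoperimetric decomposition.
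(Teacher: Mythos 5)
Your proof is correct and follows essentially the same route as the paper: decompose $a$ via the relative isoperimetric function, bound each conjugate $h_i r_i h_i^{-1}$ within $4D(\varphi)$ of $\tilde\varphi(r_i)$, and pay $(m-1)D(\varphi)$ for the product, giving the same $(C+5D)f$ bound; the converse is the same one-line observation. Your extra remark justifying $|\tilde\varphi(h)+\tilde\varphi(h^{-1})|\le 2D(\varphi)$ without antisymmetry is a correct refinement of a step the paper leaves implicit.
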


    \begin{proof} 
        Let $B:=\sup\{|\tilde\varphi(r)|\mid r\in R\}$ and $D:=D(\varphi)$. For any $a\in \ker(\theta)$, 
        there exists $m\le f(|a|)$ and an expression \[a= \prod_{i=1}^{m} h_ir_ih_i^{-1},\] where $f_i\in K*F(X)$, $r_i\in R$. Note that $|\tilde\varphi(h_ir_ih_i^{-1})-\tilde\varphi(r_i)|\le 4D$ since $\tilde\varphi$ is a quasimorphism. Therefore, 
        \[|\tilde\varphi(a)|\le \sum_{i=1}^m |\tilde\varphi(r_i)|+5Dm \le (B+5D)f(|a|).\] 
        This shows that $\varphi$ is $(G,X)$-controlled by a constant multiple of $f$.

        Now we need to prove the second statement. By definition, for any $\varphi\in \hat Q(K)$ that is $(G,X)$-controlled by a function $f$,
        \[|\tilde\varphi(r)|\le f(|r|)\]
        for any $r\in R$. The right hand side is bounded because $R$ is bounded.
    \end{proof}

    \begin{cor}\label{LCTandCT}
        Assume that $G$ is weakly hyperbolic relative to $K$ and $X$. A quasimorphism $\varphi\in \hat Q(K)$ is linearly $(G,X)$-controlled if and only if $\varphi$ is $(G,X)$-controlled. 
    \end{cor}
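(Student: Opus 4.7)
The plan is to combine the two directions of Proposition \ref{Prop_CTviaRP} with the characterization of weak relative hyperbolicity in Lemma \ref{RelPresent}. The ``only if'' direction is immediate from the definition: every linear function is in particular a function, so a linearly $(G,X)$-controlled quasimorphism is $(G,X)$-controlled.

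For the nontrivial ``if'' direction, I would first invoke Lemma \ref{RelPresent}(2): since $G$ is weakly hyperbolic relative to $K$ and $X$, there exists a bounded relative presentation
\[
G = \langle K, X \mid R_K, R\rangle
\]
admitting a linear relative isoperimetric function $f(n) = Cn + C$ for some constant $C > 0$. Fix such a presentation.

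Next, apply the second half of Proposition \ref{Prop_CTviaRP}. Since the chosen presentation is bounded and $\varphi$ is assumed to be $(G,X)$-controlled (by some possibly nonlinear function), the quantity $B := \sup\{|\tilde\varphi(r)| \mid r \in R\}$ is finite. Then apply the first half of Proposition \ref{Prop_CTviaRP} to this same presentation: the finiteness of $B$ together with the linear relative isoperimetric function $f$ gives that $\varphi$ is $(G,X)$-controlled by a constant multiple of $f$, which is again linear. Hence $\varphi$ is linearly $(G,X)$-controlled.

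There is no real obstacle here; the proof is just a two-line assembly of previously established facts, and the only thing to be slightly careful about is that the bounded presentation supplied by Lemma \ref{RelPresent} uses the same relative generating set $X$, so that the notion of $(G,X)$-control in the hypothesis matches the one appearing in Proposition \ref{Prop_CTviaRP}.
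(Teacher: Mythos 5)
Your proposal is correct and follows exactly the paper's own argument: invoke Lemma \ref{RelPresent} to get a bounded relative presentation with a linear relative isoperimetric function, then chain the second and first halves of Proposition \ref{Prop_CTviaRP}. No issues.
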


    \begin{proof}
        By Lemma \ref{RelPresent}, there exists a bounded relative presentation $G=\langle K,X\mid R_K,R\rangle$ with a linear relative isoperimetric function. 
        Assume that $\varphi$ is $(G,X)$-controlled. By the second part of Proposition \ref{Prop_CTviaRP}, $\sup\{|\tilde\varphi(r)|\mid r\in R\}$ is finite. Therefore, $\varphi$ is linearly $(G,X)$-controlled by the first part of Proposition \ref{Prop_CTviaRP}.
    \end{proof}

\section{Extending controlled quasimorphisms}
\label{ProofMain}
    We are going to prove Theorem \ref{MainThm} in this section. Thoughout this section, let $G$ be a group and $K$ a subgroup. Let $X$ be a (symmetric) relative generating set of $G$ with respect to $K$. 
    Let $\varphi\in Q_{as}(K)$ with the defect $D(\varphi)=D$. Assume that $\varphi$ is linearly $(G,X)$-controlled with a constant $C_0>0$. For any $a\in K*F(X)$, we write $|a|_X$ to mean the number of $X$-letters (with multiplicity) in the normal form of $a$. 

    Now let $a\in \ker(\theta)$. If $|a|_X\ge 1$, since $\varphi$ is linearly controlled, we have
    \begin{align}
        |\tilde\varphi(a)|&\le C_0|a|+C_0 \nonumber\\
        &\le C_0(2|a|_X+1)+C_0 \nonumber\\
        &\le 4C_0|a|_X. \label{NewLC}
    \end{align}
    If $|a|_X=0$, then $a=1$, because $\theta$ is injective on $K$. By antisymmetricity, $\varphi(1)=0$, and thus (\ref{NewLC}) also holds. 
    We will use the new inequality (\ref{NewLC}) for linearly controlled quasimorphisms in the rest of this section. 
    
    Given a constant $C\ge 20C_0+11D$, define a function on $K*F(X)$ by
    \[\tilde\varphi_C(a):=\tilde\varphi(a)+C|a|_X.\]
    For any $g\in G$, define 
    \[\Phi_C(g):=\inf\{\tilde\varphi_C(a)\mid \theta(a)=g\}.\] 
    We will prove that the antisymmetrization of $\Phi_C$ is the quasimorphism we want.

\subsection{Auxiliary lemmata}
    The following coarse triangle inequalities mix those associated with $\tilde\varphi$ and $|\cdot|_X$ and are easy to use. 

    \begin{lem}\label{CoarseTriangle}
        For any $a,b\in K*F(X)$, we have
        \begin{enumerate}[label=\textnormal{(\roman*)}]
            \item $|\tilde\varphi_C(ab)-(\tilde\varphi_C(a)+\tilde\varphi(b))|\le C|b|_X+D$ 
            \item $|\tilde\varphi_C(ab)-(\tilde\varphi(a)+\tilde\varphi_C(b))|\le C|a|_X+D$ 
        \end{enumerate}      
    \end{lem}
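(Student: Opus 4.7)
The plan is to unfold the definition $\tilde\varphi_C=\tilde\varphi+C|\cdot|_X$ and separate the desired inequality into two independent pieces: a ``quasimorphism part'' involving only $\tilde\varphi$ and a ``length part'' involving only $|\cdot|_X$. Concretely, I would write
\[
\tilde\varphi_C(ab)-\bigl(\tilde\varphi_C(a)+\tilde\varphi(b)\bigr)
=\bigl(\tilde\varphi(ab)-\tilde\varphi(a)-\tilde\varphi(b)\bigr)+C\bigl(|ab|_X-|a|_X\bigr),
\]
and analogously for (ii), so that the two kinds of error terms decouple completely.

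For the first summand, I would invoke the fact already recorded earlier in the excerpt that $\tilde\varphi$ is a quasimorphism on $K*F(X)$ with $D(\tilde\varphi)=D(\varphi)=D$ (this is immediate from the pull-back construction via $\eta$, since $\eta$ is a homomorphism). This bounds the first summand by $D$.

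For the second summand I need the triangle inequalities for $|\cdot|_X$, namely
\[
\bigl||ab|_X-|a|_X\bigr|\le |b|_X\quad\text{and}\quad \bigl||ab|_X-|b|_X\bigr|\le |a|_X.
\]
The subadditivity $|xy|_X\le |x|_X+|y|_X$ on $K*F(X)$ is clear because reducing the concatenation $xy$ to normal form can only cancel letters, never create them, and since $X$ is symmetric we also have $|b^{-1}|_X=|b|_X$; applying subadditivity to $a=(ab)b^{-1}$ then yields the reverse bound. This gives $C\bigl||ab|_X-|a|_X\bigr|\le C|b|_X$, and analogously for (ii).

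Combining the two bounds via the triangle inequality in $\R$ yields (i) and (ii). The argument is essentially routine bookkeeping; I do not expect any genuine obstacle, the only point that warrants a line of explanation is the length triangle inequality, which relies on the fact that multiplication in a free product cannot increase the $X$-letter count of the normal form.
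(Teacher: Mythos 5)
Your proof is correct and follows essentially the same route as the paper: unfold $\tilde\varphi_C=\tilde\varphi+C|\cdot|_X$, bound the quasimorphism term by $D$, and bound the length term by $C|b|_X$ (resp.\ $C|a|_X$) via the triangle inequality for $|\cdot|_X$. Your extra remark justifying $\bigl||ab|_X-|a|_X\bigr|\le|b|_X$ (reduction in the free product never creates $X$-letters, and $X$ symmetric gives $|b^{-1}|_X=|b|_X$) is a harmless elaboration of what the paper takes for granted.
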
 

    \begin{proof}
        For (i), since $\tilde\varphi$ is a quasimorphism and $|\cdot|_X$ satisfies the triangle inequality, we estimate that 
        \begin{align*}
            |\tilde\varphi_C(ab)-(\tilde\varphi_C(a)+\tilde\varphi(b))|&=|\tilde\varphi(ab)+C|ab|_X-\tilde\varphi(a)-C|a|_X-\tilde\varphi(b)|\\
            &\le |\tilde\varphi(ab)-\tilde\varphi(a)-\tilde\varphi(b)|+C||ab|_X-|a|_X|\\
            &\le D+C|b|_X.
        \end{align*}
        Similarly, the inequality (ii) is also true.
    \end{proof}

    The following lemma shows that $\Phi_C$ is a well-defined real-valued function.
    
    \begin{lem}\label{MinExists}
        For any $g\in G$, $\Phi_C(g)$ is finite. Moreover, for any $k\in K$, $\Phi_C(k)=\varphi(k)$ and the only element in $\theta^{-1}(k)$ that realizes the infimum is $k$ itself. 
    \end{lem}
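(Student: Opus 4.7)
The plan is to bound $\tilde\varphi_C$ from below on each fiber $\theta^{-1}(g)$ by comparing an arbitrary preimage of $g$ with a fixed reference preimage via the kernel element obtained as their quotient, and then to specialize to $g = k \in K$ with $k$ itself as the canonical reference.

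For finiteness, fix any $a_0 \in \theta^{-1}(g)$, which exists because $K \sqcup X$ generates $G$. For an arbitrary $a \in \theta^{-1}(g)$, the product $aa_0^{-1}$ lies in $\ker(\theta)$, and (\ref{NewLC}) combined with the quasimorphism triangle inequality applied to the pair $(aa_0^{-1}, a_0)$ gives a lower bound on $\tilde\varphi(a)$ of the form $\tilde\varphi(a_0) - 4C_0(|a|_X + |a_0|_X) - D$, where I have also used the triangle inequality $|aa_0^{-1}|_X \leq |a|_X + |a_0|_X$. Adding $C|a|_X$ and using $C \geq 4C_0$, the nonnegative term $(C - 4C_0)|a|_X$ drops out, yielding a uniform lower bound on $\tilde\varphi_C(a)$ depending only on $a_0$. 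Together with the trivial upper bound $\Phi_C(g) \leq \tilde\varphi_C(a_0)$, this shows $\Phi_C(g)$ is a finite real number.

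For the second assertion, note that $k \in K * F(X)$ lies in $\theta^{-1}(k)$ with $|k|_X = 0$, hence $\tilde\varphi_C(k) = \varphi(k)$ and $\Phi_C(k) \leq \varphi(k)$. Specializing the previous argument with $a_0 = k$, one has $|ak^{-1}|_X = |a|_X$ exactly (since $k$ has no $X$-letters), so the same estimate sharpens to
\[ \tilde\varphi_C(a) \geq \varphi(k) + (C - 4C_0)|a|_X - D. \]
Since $C \geq 20C_0 + 11D$ makes $C - 4C_0 - D > 0$, any $a \in \theta^{-1}(k)$ with $|a|_X \geq 1$ satisfies $\tilde\varphi_C(a) > \varphi(k)$. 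If instead $|a|_X = 0$, then the reduced normal form of $a$ consists entirely of $K$-letters with no two consecutive, so $a$ is either empty or a single $K$-letter; in either case $a \in K$ and $a = k$ by injectivity of $\theta|_K$. This yields both $\Phi_C(k) = \varphi(k)$ and uniqueness of the realizer.

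The only delicate point is to keep the bookkeeping clean between the total word length $|\cdot|$, used in the definition of $(G,X)$-control, and the $X$-letter count $|\cdot|_X$ appearing in $\tilde\varphi_C$, and to exploit antisymmetry of $\tilde\varphi$ (inherited from $\varphi$) so that the symmetric quasimorphism triangle inequality produces a one-sided bound in the direction needed. Beyond this, no serious obstacle is expected: the argument is a direct application of the linear control estimate (\ref{NewLC}) to the single kernel element $aa_0^{-1}$.
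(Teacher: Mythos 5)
Your proof is correct and follows essentially the same route as the paper: bound $\tilde\varphi_C$ from below on the fiber $\theta^{-1}(g)$ by comparing an arbitrary preimage with a fixed reference via the kernel element and the linear control estimate (\ref{NewLC}), then specialize to $a_0=k$ and use $C-4C_0-D>0$ together with injectivity of $\theta$ on $K$ to get $\Phi_C(k)=\varphi(k)$ with $k$ the unique minimizer. The only cosmetic difference is that the paper packages the quasimorphism estimate through Lemma \ref{CoarseTriangle} rather than applying the defect inequality and the $|\cdot|_X$ triangle inequality directly (and your aside about needing antisymmetry for the one-sided bound is unnecessary, since the defect inequality is already two-sided); this does not affect correctness.
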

    
    \begin{proof}

        Fix any $g\in G$. Let $a, b\in \theta^{-1}(g)$. Obviously, $ab^{-1}\in \ker(\theta)$. By (\ref{NewLC}), $\tilde\varphi(ab^{-1})\ge -4C_0|ab^{-1}|_X$. Thus, 
        \[\tilde\varphi_C(ab^{-1})\ge (C-4C_0)|ab^{-1}|_X\ge 0.\]
        The second equality holds if and only if $a=b$. On the other hand, by Lemma \ref{CoarseTriangle}, $\tilde\varphi_C(ab^{-1})\le \tilde\varphi_C(a)-\tilde\varphi(b)+C|b|_X+D$. Therefore, 
        \[\tilde\varphi_C(a)\ge \tilde\varphi(b)-C|b|_X-D.\] 
        It follows that $\Phi_C(g)\ge \tilde\varphi(b)-C|b|_X-D$, which is finite. 

        For the ``moreover'' part, let $a\in \theta^{-1}(k)$ such that $a\ne k$. Note that $\tilde\varphi_C(k)=\varphi(k)$ and $|a|_X\ge 1$. By (\ref{NewLC}), 
        \begin{align*}
            \tilde\varphi_C(a)-\tilde\varphi_C(k)&=\tilde\varphi(a)+C|a|_X-\varphi(k)\\
            &\ge \tilde\varphi(ak^{-1})+C|a|_X-D\\
            &\ge -4C_0|ak^{-1}|_X+C|a|_X-D\\
            &= (C-4C_0)|a|_X-D\\
            &\ge C-4C_0-D\\
            &> 0,
        \end{align*}
        and hence the conclusion follows.
    \end{proof}

\subsection{The Cayley graph}
    Let $a\in K*F(X)$ and $\epsilon\ge 0$. If $\tilde\varphi_C(a)\le\Phi_C(\theta(a))+\epsilon$,  we say that $a$ is an $\epsilon$-\emph{minimal} element. Note that for any $\epsilon>0$ and any $g\in G$, there exists an $\epsilon$-minimal element in $\theta^{-1}(g)$. 
    Correspondingly, a path in $\Gamma(G,K\sqcup X)$ labeled by the normal form of an $\epsilon$-minimal element is called an $\epsilon$-\emph{minimal} path. 
    Clearly, the set of $\epsilon$-minimal paths are $G$-invariant. We fix any $\epsilon\in(0, C_0)$ in the following. 
    
    For any path $\gamma\subset\Gamma(G,K\sqcup X)$, we write $l(\gamma)$ to mean its length and use $\lab(\gamma)$ to denote its label. 
    For simplicity, we also write $\lab(\gamma)$ to mean the element in $K*F(X)$ represented by $\lab(\gamma)$ when $\lab(\gamma)$ is a normal form. 
    The following lemma shows that a subpath of an $\epsilon$-minimal path is $(4D+\epsilon)$-minimal. 
    
    \begin{lem}\label{Subpath}
        Let $\gamma\subset\Gamma(G,K\sqcup X)$ be an $\epsilon$-minimal path and $\alpha\subset\gamma$ a subpath. Then $\tilde\varphi_C(\lab(\alpha))\le \Phi_C(\theta(\lab(\alpha)))+4D+\epsilon$. 
    \end{lem}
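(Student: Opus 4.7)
The plan is to argue by contradiction: if $\alpha$ were far from $\Phi_C$-optimal, I could splice in a better approximant to $\Phi_C(\theta(\lab(\alpha)))$ in place of $\lab(\alpha)$ to produce a competitor for $\gamma$ that violates its $\epsilon$-minimality.

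First I would unpack the subpath decomposition. Since $\gamma$ is an $\epsilon$-minimal path, its label $a\in K*F(X)$ is in normal form. Writing $\gamma=\beta_1\alpha\beta_2$ and setting $b:=\lab(\beta_1)$, $c:=\lab(\alpha)$, $d:=\lab(\beta_2)$, each of $b,c,d$ is itself in normal form, and the concatenation $bcd$ is already the normal form of $a$. This gives the genuine equality $|a|_X=|b|_X+|c|_X+|d|_X$, together with $\eta(a)=\eta(b)\eta(c)\eta(d)$ in $K$.

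Now suppose for contradiction that $\tilde\varphi_C(c)>\Phi_C(\theta(c))+4D+\epsilon$. By definition of $\Phi_C$ as an infimum, I can pick $c'\in K*F(X)$ with $\theta(c')=\theta(c)$ and $\tilde\varphi_C(c')<\tilde\varphi_C(c)-4D-\epsilon$. Set $a':=bc'd$; then $\theta(a')=\theta(a)$, so $\tilde\varphi_C(a')\ge\Phi_C(\theta(a))$. Applying the triple-product defect bound $|\varphi(k_1k_2k_3)-\varphi(k_1)-\varphi(k_2)-\varphi(k_3)|\le 2D$ to $\eta(a)=\eta(b)\eta(c)\eta(d)$ and to $\eta(a')=\eta(b)\eta(c')\eta(d)$, together with the exact identity $|a|_X=|b|_X+|c|_X+|d|_X$ and the subadditivity $|a'|_X\le|b|_X+|c'|_X+|d|_X$ (free-product reduction can only cancel $X$-letters), the $\tilde\varphi(b)$, $\tilde\varphi(d)$, and $C|b|_X$, $C|d|_X$ contributions cancel cleanly and I obtain
\[ \tilde\varphi_C(a)-\tilde\varphi_C(a')\ge\tilde\varphi_C(c)-\tilde\varphi_C(c')-4D>\epsilon. \]
But $\epsilon$-minimality of $\gamma$ forces $\tilde\varphi_C(a)\le\Phi_C(\theta(a))+\epsilon\le\tilde\varphi_C(a')+\epsilon$, contradicting the displayed inequality.

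The only point requiring care is the asymmetry between the equality for $|a|_X$ and the inequality for $|a'|_X$: it is essential that $bcd$ is already reduced (because $a$ is in normal form and $\alpha$ is a contiguous subpath of $\gamma$), whereas $bc'd$ need not be. Fortunately the inequality runs in the favorable direction, so the $C|\cdot|_X$ linear term contributes no error on the unfavorable side and the telescoping yields the clean $-4D$ slack. I do not expect a deeper obstacle; Lemma \ref{Subpath} is a standard ``subpaths of near-minimizing paths are near-minimizing'' estimate, and once the bookkeeping is arranged the contradiction is essentially forced. Note that invoking Lemma \ref{CoarseTriangle} instead of the direct triple-product estimate would introduce additional $C(|b|_X+|d|_X)$ terms that do not cancel, so the direct quasimorphism bound is what buys the subpath-length-independent constant $4D$.
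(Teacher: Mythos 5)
Your argument is correct and is essentially the paper's proof: both splice a better preimage of $\theta(\lab(\alpha))$ into the decomposition $\gamma=\beta_1\alpha\beta_2$, use $\epsilon$-minimality of $\gamma$ for the lower bound, and exploit the exact additivity of $|\cdot|_X$ along the normal form versus its subadditivity for the spliced word, with the two triple-product quasimorphism estimates giving the $4D$ slack. The paper just phrases it as a direct two-sided bound on $\tilde\varphi_C(b_1a'b_2)-\tilde\varphi_C(b_1ab_2)$ rather than by contradiction, and your closing remark about why Lemma \ref{CoarseTriangle} would be too lossy matches the paper's choice of the direct estimate.
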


    \begin{proof}        
        Assume that $\gamma=\beta_1\alpha\beta_2$. Let $a=\lab(\alpha)$, $b_1=\lab(\beta_1)$, $b_2=\lab(\beta_2)$. Let $a'\in \theta^{-1}(\theta(a))$ be an $\epsilon$-minimal element.  
        By assumptions, $\Delta:=\tilde\varphi_C(b_1a'b_2)-\tilde\varphi_C(b_1ab_2)\ge -\epsilon$. 
        
        On the other hand, note that $|b_1ab_2|_X=|b_1|_X+|a|_X+|b_2|_X$ since $\gamma$ is labeled by a normal form. We estimte that
        \begin{align*}
            \Delta&=\tilde\varphi(b_1a'b_2)-\tilde\varphi(b_1ab_2)+C|b_1a'b_2|_X-C|b_1ab_2|_X\\
            &\le \tilde\varphi(a')-\tilde\varphi(a)+4D+C|a'|_X-C|a|_X\\
            &=\tilde\varphi_C(a')-\tilde\varphi_C(a)+4D.
        \end{align*}
        In conclusion, $\tilde\varphi_C(a)\le \Phi_C(\theta(a))+ 4D+\epsilon.$
    \end{proof}

    \begin{lem}\label{Quasigeodesic}
        Any $\epsilon$-minimal path is a $(3,2)$-quasi-geodesic in $\Gamma(G,K\sqcup X)$. 
    \end{lem}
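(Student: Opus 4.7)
The plan is to compare each subpath of an $\epsilon$-minimal path with a Cayley-graph geodesic between the same endpoints, using Lemma~\ref{Subpath} together with the defining minimality of $\Phi_C$ and the linear control estimate (\ref{NewLC}).

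Fix a subpath $\alpha \subseteq \gamma$ from $p$ to $q$ and set $a' := \lab(\alpha)$, so $l(\alpha) = |a'|$. Let $d := d_{\Gamma(G, K \sqcup X)}(p, q)$ and choose $b \in K * F(X)$ in normal form with $\theta(b) = \theta(a')$ and $|b| = d$; in particular $|b|_X \le d$. By Lemma~\ref{Subpath}, $\tilde\varphi_C(a') \le \Phi_C(\theta(a')) + 4D + \epsilon \le \tilde\varphi_C(b) + 4D + \epsilon$, which after expanding $\tilde\varphi_C$ rewrites as
\[
    C(|a'|_X - |b|_X) \le \tilde\varphi(b) - \tilde\varphi(a') + 4D + \epsilon.
\]

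To control the right-hand side, I would apply (\ref{NewLC}) to the loop $a' b^{-1} \in \ker(\theta)$: together with the quasimorphism inequality and the antisymmetry of $\tilde\varphi$, and the fact that normal-form reduction cannot increase $|\cdot|_X$, this gives
\[
    |\tilde\varphi(b) - \tilde\varphi(a')| \le |\tilde\varphi(a'b^{-1})| + D \le 4 C_0 (|a'|_X + |b|_X) + D.
\]
Substituting yields
\[
    (C - 4C_0) |a'|_X \le (C + 4C_0) |b|_X + 5D + \epsilon.
\]
Using the standing choice $C \ge 20 C_0 + 11 D$ together with $\epsilon < C_0$, a short arithmetic check gives $(C + 4C_0)/(C - 4C_0) \le 3/2$ and $(5D + \epsilon)/(C - 4C_0) \le 1/2$, so $|a'|_X \le \tfrac{3}{2}|b|_X + \tfrac{1}{2}$. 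Since a normal form has $|a'| \le 2|a'|_X + 1$ (alternating $K$- and $X$-syllables with singleton $K$-syllables), one concludes $l(\alpha) = |a'| \le 3|b|_X + 2 \le 3d + 2$, which is exactly the $(3,2)$-quasi-geodesic condition.

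The main obstacle is purely bookkeeping: the threshold $C \ge 20 C_0 + 11 D$ was calibrated precisely so that the key ratio is $\le 3/2$ and the leftover additive term is $\le 1/2$, and the bound $\epsilon < C_0$ is used in the same way. Any looser estimate would inflate the quasi-geodesic constants away from $(3,2)$, so the only care needed is to apply each inequality tightly.
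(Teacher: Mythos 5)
Your proposal is correct and follows essentially the same route as the paper: compare a subpath's label with a geodesic label via Lemma~\ref{Subpath} and the infimum defining $\Phi_C$, bound the difference of $\tilde\varphi$-values using (\ref{NewLC}) on the loop and the defect, and convert the resulting bound on $|a'|_X$ into a length bound via $|a'|\le 2|a'|_X+1$, with the same arithmetic role for $C\ge 20C_0+11D$ and $\epsilon<C_0$. The only cosmetic difference is that you apply (\ref{NewLC}) to $a'b^{-1}$ rather than $ba'^{-1}$, which changes nothing.
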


    \begin{proof}
        Let $\gamma\subset \Gamma(G,K\sqcup X)$ be an $\epsilon$-minimal path and $\alpha$ a subpath. Let $\beta$ be a path with the same endpoints as $\alpha$. Let $a=\lab(\alpha)$, $b=\lab(\beta)$. 
        
        By Lemma \ref{Subpath}, $\tilde\varphi_C(a)\le \Phi_C(\theta(a))+4D+\epsilon\le \tilde\varphi_C(b)+4D+\epsilon$. On the other hand, we estimate by (\ref{NewLC}) that
        \begin{align*}
            \tilde\varphi_C(b)-\tilde\varphi_C(a)&=\tilde\varphi(b)-\tilde\varphi(a)+C|b|_X-C|a|_X\\ &\le\tilde\varphi(ba^{-1})+D+C|b|_X-C|a|_X\\
            &\le 4C_0|ba^{-1}|_X+D+C|b|_X-C|a|_X\\
            &\le (C+4C_0)|b|_X-(C-4C_0)|a|_X+D
        \end{align*}
        Therefore, \[|a|_X\le \frac{C+4C_0}{C-4C_0}|b|_X+\frac{5D+\epsilon}{C-4C_0}.\]
        
        Note that $l(\alpha)=|a|$ because $\lab(\gamma)$ is a normal form. Thus, $l(\alpha)\le 2|a|_X+1$. On the other hand, $l(\beta)\ge |b|\ge |b|_X$. 
        Choosing $\beta$ as a geodesic, we conclude that $\gamma$ is a $(\lambda,\mu)$-quasi-geodesic, where $\lambda=\frac{2C+8C_0}{C-4C_0}\le 3$ and $\mu=\frac{10D+2\epsilon}{C-4C_0}+1\le 2$.
    \end{proof}

    \begin{lem}\label{WeakQM}
        For any $\delta,m\ge 0$, there exists $M\ge 1$ such that the following holds. Assume that $\Gamma(G,K\sqcup X)$ is $\delta$-hyperbolic.
        For any $g,h\in G$, if $g$ lies within distance $m$ from a geodesic from $1$ to $gh$, then $|\Phi_C(g)+ \Phi_C(h)-\Phi_C(gh)|\le MC$.
    \end{lem}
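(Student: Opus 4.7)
The plan is to establish the two inequalities $\Phi_C(gh) \leq \Phi_C(g) + \Phi_C(h) + O(C)$ and $\Phi_C(g) + \Phi_C(h) \leq \Phi_C(gh) + O(C)$ separately, then absorb all additive constants into a single term $MC$ via the standing hypothesis $C \ge 20 C_0 + 11 D$. The first inequality does not use hyperbolicity at all: given $\epsilon$-minimal $a\in \theta^{-1}(g)$ and $b\in \theta^{-1}(h)$, the concatenation $ab$ lies in $\theta^{-1}(gh)$, and Lemma \ref{CoarseTriangle}(i) yields $\Phi_C(gh)\le \tilde\varphi_C(ab)\le \tilde\varphi_C(a)+\tilde\varphi_C(b)+D\le \Phi_C(g)+\Phi_C(h)+2\epsilon+D$.

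For the reverse inequality I would pick an $\epsilon$-minimal $c\in \theta^{-1}(gh)$ and let $\gamma$ be the corresponding path in $\Gamma(G,K\sqcup X)$ from $1$ to $gh$. By Lemma \ref{Quasigeodesic}, $\gamma$ is a $(3,2)$-quasi-geodesic, so by the Morse stability lemma in a $\delta$-hyperbolic graph it stays within Hausdorff distance $M_1 = M_1(\delta)$ of every geodesic from $1$ to $gh$. Combined with the hypothesis that $g$ lies within distance $m$ of some such geodesic, there is a vertex $v$ of $\gamma$ with $d(g, v) \le m + M_1$. Splitting $\gamma = \gamma_1 \gamma_2$ at $v$ gives a factorization $c = a_1 a_2$ in $K*F(X)$ with $\theta(a_1) = v$ and $\theta(a_2) = v^{-1}gh$, and since splitting at a vertex preserves the normal form, $|a_1|_X + |a_2|_X = |c|_X$. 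Next I would choose any $u \in K*F(X)$ with $\theta(u) = v^{-1}g$ and $|u| \le m + M_1$; then $\theta(a_1 u) = g$ and $\theta(u^{-1}a_2) = h$, giving
\[
\Phi_C(g) + \Phi_C(h) \;\le\; \tilde\varphi_C(a_1 u) + \tilde\varphi_C(u^{-1} a_2).
\]

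Expanding each summand via Lemma \ref{CoarseTriangle} produces an upper bound of the form $\tilde\varphi(a_1) + \tilde\varphi(a_2) + \tilde\varphi(u) + \tilde\varphi(u^{-1}) + 2D + C(|a_1|_X + |a_2|_X + 2|u|_X)$. Three cancellations will collapse this: (i) $|a_1|_X + |a_2|_X = |c|_X$, as noted above; (ii) the defect bound gives $\tilde\varphi(a_1) + \tilde\varphi(a_2) \le \tilde\varphi(c) + D$; and, crucially, (iii) the antisymmetry of $\varphi$ together with the fact that $\eta$ is a homomorphism forces $\tilde\varphi(u) + \tilde\varphi(u^{-1}) = 0$. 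The right-hand side thus becomes at most $\tilde\varphi_C(c) + 3D + 2C|u|_X \le \Phi_C(gh) + \epsilon + 3D + 2C(m + M_1)$. Because $D \le C/11$ and $\epsilon < C_0 \le C/20$, the additive constants can be absorbed into $MC$ for some $M$ depending only on $\delta$ and $m$.

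The step I expect to be the main obstacle is conceptual rather than technical: $\Phi_C$ is not Lipschitz with respect to the word metric on $\Gamma(G,K\sqcup X)$ — since $\varphi$ is typically unbounded on $K$, a single $K$-edge can change $\Phi_C$ by an arbitrary amount, so the bound $d(g,v) \le m + M_1$ cannot be used directly to compare $\Phi_C(g)$ with $\Phi_C(v)$. The antisymmetry hypothesis on $\varphi$ is precisely what rescues the bookkeeping: inserting a detour word $u$ from $v$ to $g$ on one side and its inverse $u^{-1}$ on the other causes the potentially large contributions $\tilde\varphi(u)$ and $\tilde\varphi(u^{-1})$ to cancel, leaving only the controlled $C|u|_X$ term. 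This is precisely why Theorem \ref{MainThm} is stated for $Q_{as}(K)$.
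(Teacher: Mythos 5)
Your argument is correct, and it differs from the paper's in how the hard inequality is organized. The paper keeps things symmetric: it takes $\epsilon$-minimal paths for all three of $g$, $h$, $gh$, projects $g$ to a closest point $g'$ on the minimal path $\beta$ for $gh$, and compares the two triangles $\alpha_1\gamma\beta_1^{-1}$ and $\alpha_2\beta_2^{-1}\gamma^{-1}$ using Lemma \ref{Subpath} (subpaths of minimal paths are $(4D+\epsilon)$-minimal) together with Lemma \ref{CoarseTriangle}; the contributions $\pm\tilde\varphi(c)$ of the connector cancel when the two triangle estimates are added, and the error is controlled by $|c|_X$, which is bounded via quasi-geodesicity of $\gamma$ and $\beta$ and the Morse lemma. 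You instead prove the two one-sided inequalities separately: subadditivity $\Phi_C(gh)\le\Phi_C(g)+\Phi_C(h)+D+2\epsilon$ is immediate from Lemma \ref{CoarseTriangle} with no hyperbolicity, and for the reverse you split a single $\epsilon$-minimal element $c$ for $gh$ at a vertex $v$ near $g$ (Morse lemma applied to the $(3,2)$-quasi-geodesic from Lemma \ref{Quasigeodesic}) and insert a short connector $u$ and its inverse, with $\tilde\varphi(u)+\tilde\varphi(u^{-1})=0$ by antisymmetry since $\tilde\varphi=\varphi\circ\eta$. Your route needs minimality only of the path for $gh$ and avoids Lemma \ref{Subpath} altogether, which is a genuine streamlining; the paper's version processes both directions at once through the triangle bookkeeping, at the cost of invoking the subpath lemma. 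Both uses of hyperbolicity and of antisymmetry are the same in substance (the paper's ``similarly'' step for $\tilde\varphi_C(a_1)\le\tilde\varphi_C(b_1)-\tilde\varphi(c)+\cdots$ also secretly uses $\tilde\varphi(c^{-1})=-\tilde\varphi(c)$), so your closing remark correctly identifies why the statement lives in $Q_{as}(K)$. The only cosmetic slack in your write-up is the bound $d(g,v)\le m+M_1$: the nearest point of the quasi-geodesic need not be a vertex (nor need the point on the geodesic be), so one should allow $m+M_1+1$, say; this is absorbed into $M$ and does not affect the conclusion that $M$ depends only on $\delta$ and $m$.
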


    \begin{proof}
        Let $\alpha_1$ be an $\epsilon$-minimal path from $1$ to $g$, let $\alpha_2$ be an $\epsilon$-minimal path from $g$ to $gh$, and let $\beta$ be an $\epsilon$-minimal path from $1$ to $gh$. By projecting $g$ to a closest point $g'$ on $\beta$, we divide $\beta$ into $\beta_1\beta_2$. Let $\gamma$ be an $\epsilon$-minimal path from $g$ to $g'$. Let $a_1,a_2,b,b_1,b_2,c$ be the labels of $\alpha_1,\alpha_2,\beta,\beta_1,\beta_2,\gamma$, respectively. By definition, $\tilde\varphi_C(a_1)\le \Phi_C(g)+\epsilon$, $\tilde\varphi_C(a_2)\le \Phi_C(h)+\epsilon$ and $\tilde\varphi_C(b)\le \Phi_C(gh)+\epsilon$. 
        
        By Lemma \ref{Subpath} and Lemma \ref{CoarseTriangle}, we estimate for the triangle $\alpha_1\gamma\beta_1^{-1}$ that
        \begin{align*}
            \tilde\varphi_C(b_1)&\le \Phi_C(\theta(b_1))+4D+\epsilon\\
            &\le \tilde\varphi_C(a_1c)+4D+\epsilon\\
            &\le \tilde\varphi_C(a_1)+\tilde\varphi(c)+C|c|_X+5D+\epsilon.
        \end{align*}
        Similarly, we also have 
        \[\tilde\varphi_C(a_1)\le \tilde\varphi_C(b_1)-\tilde\varphi(c)+C|c|_X+5D+\epsilon.\]
        Therefore, 
        \begin{align}\label{FirstTriangle}
            |\tilde\varphi_C(a_1)+\tilde\varphi(c)-\tilde\varphi_C(b_1)|\le C|c|_X+5D+\epsilon.
        \end{align}

        For the triangle $\alpha_2\beta_2^{-1}\gamma^{-1}$, we can estimate in a similar way and obtain
        \begin{align}\label{SecondTriangle}
        |\tilde\varphi_C(a_2)-\tilde\varphi_C(b_2)-\tilde\varphi(c)|\le C|c|_X+5D+\epsilon.
        \end{align}

        Combining (\ref{FirstTriangle}) with (\ref{SecondTriangle}) and note that $|\tilde\varphi_C(b_1)+\tilde\varphi_C(b_2)-\tilde\varphi_C(b)|\le D$ and $C\ge 11D+2\epsilon$, we obtain
        \[|\tilde\varphi_C(a_1)+\tilde\varphi_C(a_2)-\tilde\varphi_C(b)|\le (2|c|_X+1)C.\]
        Since $\gamma$ is a $(3,2)$-quasi-geodesic, we have $|c|_X\le l(\gamma)\le 3d(g,g')+2$. Since $\beta$ is a $(3,2)$-quasi-geodesic, the distance $d(g,g')=d(g,\beta)$ is bounded above by a constant depending only on $\delta,m$ by Morse Lemma. Thus, the conclusion follows. 
    \end{proof}

    \begin{proof}[Proof of Theorem \ref{MainThm}]
        By Lemma \ref{LCTandCT}, we can assume that $\varphi$ is linearly $(G,X)$-controlled with a constant $C_0$. Therefore, we can follow the discussions in this section. 
        For any $g,h\in G$, consider a geodesic triangle with vertices $1,g,gh$. Since $\Gamma(G,K\sqcup X)$ is $\delta$-hyperbolic, there exists $c\in G$ such that $c$ lies within distance $\delta$ from each side. Let 
        \[\Phi'_C(g):=\frac{1}{2}(\Phi_C(g)-\Phi_C(g^{-1}))\]
        be the antisymmetrization of $\Phi_C$. Clearly, $\Phi'_C$ inherits the property described in Lemma \ref{WeakQM} from $\Phi_C$. Therefore, there exists $M\ge 0$ that only depends on $\delta$ such that 
        \begin{align*}
            |\Phi'_C(c)+\Phi'_C(c^{-1}g)-\Phi'_C(g)|&\le MC,\\
            |\Phi'_C(g^{-1}c)+\Phi'_C(c^{-1}gh)-\Phi'_C(h)|&\le MC,\\
            |\Phi'_C(c)+\Phi'_C(c^{-1}gh)-\Phi'_C(gh)|&\le MC.
        \end{align*}

        Combining these inequalities and noting that $\Phi'_C(c^{-1}g)+\Phi'_C(g^{-1}c)=0$ by antisymmetricity, we have
        \[|\Phi'_C(g)+ \Phi'_C(h)-\Phi'_C(gh)|\le 3MC+D.\]
        This proves that $\Phi'_C$ is a quasimorphism with the defect at most $3MC+D$. By choosing $C=20C_0+11D$, we see that $D(\Phi'_C)\le 60MC_0+34MD$. By Lemma \ref{MinExists}, $\Phi'_C|_K=\varphi$. In conclusion, $\varphi$ extends to $\Phi'_C\in Q_{as}(G)$.
    \end{proof}

\section{Normal subgroups}
\label{Sec_Normal}

\subsection{A criterion for being controlled}

    In this subsection, we are going to give sufficient conditions for quasimorphisms to be controlled, including a general criterion (Proposition \ref{CTNormal}) and its corollaries. Throughout this subsection, let $G$ be a group and $K$ a normal subgroup. Let $\bar G=G/K$.
    Consider the short exact sequence
    \begin{align}\label{SES1}
        1\to K\to G\xrightarrow{\pi} \bar G\to 1.
    \end{align}
    
    Assume that there are presentations $K=\langle Y\mid S\rangle$ and $\bar G=\langle \bar X\mid \bar R\rangle$. For each $\bar x\in\bar X$, choose any $x\in \pi^{-1}(\bar x)$, and let $X:=\{x\mid \bar x\in \bar X\}$. This induces a natural map $s:\bar X^*\to X^*$ called a \emph{lift} of $\bar X^*$. The quotient map $\pi$ induces a natural map from $(X\sqcup Y)^*$ to $\bar X^*$. For simplicity, we still use $\pi$ to denote this map. 

    For any $\bar r\in \bar R$, its lift $r:=s(\bar r)$ represents an element in $K$. Let $k_r\in K$ be the element represented by $r$. Let $w_r\in Y^*$ be a word that represents $k_r$, and let 
    \[R:=\{r w_r^{-1}\mid \bar r\in\bar R\}.\] 
    For each $x\in X$ and $y\in Y$, the word $xyx^{-1}$ represents an element in $K$, and thus is represented by a word $w_{xy}\in Y^*$. Let
    \[T:=\{xyx^{-1}w_{xy}^{-1}\mid x\in X, y\in Y\}.\]
    The following lemma seems to be well-known (see, for example \cite[Proposition 2.55]{HEO05} or \cite[Lemma 2.1]{BW11}).
    
    \begin{lem}\label{PreExtension}
        With the above notations, $\langle X, Y\mid R, S, T\rangle$ is a presentation of $G$. 
    \end{lem}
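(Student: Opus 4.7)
My plan is to show that the natural surjective homomorphism $\phi\colon \tilde G \to G$, where $\tilde G := \langle X, Y \mid R, S, T\rangle$, is an isomorphism by a five-lemma argument. All relations hold in $G$: those in $S$ are relations of $K$; each $r w_r^{-1} \in R$ represents $k_r k_r^{-1} = 1$ by the choice of $w_r$; and each $x y x^{-1} w_{xy}^{-1} \in T$ is trivial because $xyx^{-1} \in K$ (by normality of $K$) is represented by $w_{xy}$. Thus $\phi$ is well-defined, and it is surjective because $X \cup Y$ generates $G$.

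For the five-lemma setup, I would define $\tilde\pi\colon \tilde G \to \bar G$ by $x \mapsto \bar x$ and $y \mapsto 1$; this kills all three families of relations and is clearly surjective, so with $\tilde K := \ker(\tilde\pi)$ we have $1 \to \tilde K \to \tilde G \xrightarrow{\tilde\pi} \bar G \to 1$. The inclusion $Y \hookrightarrow \tilde G$ lands in $\tilde K$ and respects $S$, giving a homomorphism $\psi\colon K = \langle Y \mid S\rangle \to \tilde K$. The composition $\phi|_{\tilde K} \circ \psi$ is the identity on $K$, so $\psi$ is injective. Applying the five lemma to the diagram comparing the two short exact sequences, $\phi$ becomes an isomorphism once $\psi$ is shown to be surjective, i.e.\ once $\tilde K \subseteq \langle Y\rangle \subset \tilde G$ is established.

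The heart of the proof is a normal-form argument. Given $w \in (X \cup Y)^*$ representing an element of $\tilde K$, I would rewrite $w$ in $\tilde G$ as $u v$ with $u \in Y^*$ and $v \in X^*$ by iteratively pushing every $Y$-letter leftward past every $X$-letter. The relation $xy \equiv w_{xy} x$ comes directly from $T$; for $x^{-1} y$, the letterwise substitution $\sigma_x\colon F(Y) \to F(Y)$ sending $y_i \mapsto w_{xy_i}$ satisfies $x v' x^{-1} = \sigma_x(v')$ in $\tilde G$ for every $v' \in F(Y)$, and since conjugation by $x$ is an automorphism of $K$ there exists $v_y \in Y^*$ with $\sigma_x(v_y) = y$ in $K$, hence in $\tilde G$ via $S$, yielding $x^{-1} y \equiv v_y x^{-1}$. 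Once $w \equiv uv$ is achieved, applying $\tilde\pi$ shows that $v$, viewed in $F(X)$, projects to the identity of $\bar G$, so $v = \prod_j g_j r_j g_j^{-1}$ in $F(X)$ for some $g_j \in F(X)$ and lifts $r_j = s(\bar r_j)$ of $\bar r_j \in \bar R$. Using the $R$-relations each $r_j$ becomes $w_{r_j} \in Y^*$ in $\tilde G$, and each conjugation by the $X^*$-word $g_j$ is absorbed into $Y^*$ by iterating the rewrites above. Hence $v \in \langle Y\rangle$, so $w \in \langle Y\rangle$, and $\tilde K \subseteq \langle Y\rangle$ as required. The principal obstacle is the careful bookkeeping in this rewriting, in particular justifying the $x^{-1}$-conjugation move from $T$ and $S$ alone; once done, the five lemma concludes the proof.
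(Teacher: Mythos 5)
Your proposal is correct. The paper does not actually prove this lemma; it only cites it as well known (referring to \cite[Proposition 2.55]{HEO05} and \cite[Lemma 2.1]{BW11}), and your argument is essentially the standard proof found in such references: the map $\phi\colon\tilde G=\langle X,Y\mid R,S,T\rangle\to G$ is surjective, $\pi\circ\phi=\tilde\pi$ places $\ker\phi$ inside $\tilde K=\ker\tilde\pi$, and the whole content is showing $\tilde K=\langle Y\rangle$, after which either the short five lemma or the direct observation that $\phi$ is injective on $\langle Y\rangle$ finishes the proof. Your treatment of the one genuinely non-obvious point is right: conjugation by $x^{-1}$ is not among the relators $T$, but since conjugation by $x$ is an automorphism of the normal subgroup $K$, for each $y$ there is $v_y\in Y^*$ whose image under the substitution $y_i\mapsto w_{xy_i}$ equals $y$ in $K$, and this identity is then imported into $\tilde G$ through $S$; together with $xv'x^{-1}=\sigma_x(v')$ (which follows letterwise from $T$) this yields the move $x^{-1}y\equiv v_yx^{-1}$. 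Two small finishing touches: in the expression for $v$ you should allow conjugates of $r_j^{\pm1}$ (harmless, since $R$ gives $r_j^{-1}=w_{r_j}^{-1}$ in $\tilde G$ as well), and the cleanest way to see that your rewriting terminates is to induct on the number of $X^{\pm1}$-letters, pushing the rightmost such letter past the $Y$-block to its right in one step, since a naive letter-by-letter count can grow when $w_{xy}$ is long. With these routine adjustments the argument is complete.
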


    For our interest, the presentation of $K$ will be fixed as $\langle K\mid R_K\rangle$.     
    In this case, each $w_r$ or $w_{xy}$ can be chosen as a unique $K$-letter, and the presentation provided by Lemma \ref{PreExtension} can be seen as a relative presentation with respect to $K$. We rewrite $R$ as 
    \[R=\{rk_r^{-1}\mid \bar r\in\bar R\}.\]
    For each $w\in X^*$ and $k\in K$, the word $wkw^{-1}$ represents an element $k_w\in K$. Thus, we can expand $T$ to 
    \[T':=\{wkw^{-1}k_w^{-1}\mid w\in X^*, k\in K\}. \]

    The following lemma is a variation of the Bryant Park lemma \cite{BBM20}.
    
    \begin{lem}\label{LinearRIF}
        If the presentation $\bar G=\langle \bar X\mid \bar R\rangle$ has an isoperimetric function $f(n)$, then the relative presentation $G=\langle K, X\mid R_K, R, T'\rangle$ has a relative isoperimetric function $2f(n)+n$. 
    \end{lem}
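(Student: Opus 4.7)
The plan is to process $a \in \ker(\theta)$ with $|a| \le n$ in two stages. First I use $T'$-relators to move all $K$-letters of $a$ to the right, leaving a pure $F(X)$-word that becomes trivial in $\bar G$; then I import the isoperimetric expression for $\bar G = \langle \bar X \mid \bar R \rangle$ and convert each lifted $\bar R$-factor into one conjugate of an $R$-relator together with one auxiliary $T'$-conjugate.

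For the first stage, write $a$ in normal form as $u_1 k_1 u_2 k_2 \cdots u_{m_K} k_{m_K} u_{m_K+1}$ with $u_i \in F(X)$, $k_i \in K$, and $m_K + |a|_X \le n$. I push each $k_i$ rightward via the free-product identity $wk = (wkw^{-1})w$ for $w \in X^*$; inside $K * F(X)$ the word $wkw^{-1}$ can be exchanged for the single $K$-letter $k_w$ at the cost of exactly one $T'$-conjugate. After $m_K$ such pushes, $a$ takes the form
\[
 a \;=\; \Big(\prod_{i=1}^{m_K} h_i t_i h_i^{-1}\Big)\cdot u\cdot k^\ast,
\]
with $t_i \in T'$, $u := u_1 u_2 \cdots u_{m_K+1} \in F(X)$ of length at most $|a|_X$, and $k^\ast \in K$. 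Since $\theta(a) = 1$ and $T' \subset \ker(\theta)$, this forces $\pi(u) = 1$ in $\bar G$.

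Now $\pi$ restricts to the isomorphism $F(X) \to F(\bar X)$ induced by $X \leftrightarrow \bar X$, so the isoperimetric hypothesis for $\bar G$ supplies
\[
 \pi(u) \;=\; \prod_{j=1}^{m'} \bar v_j\, \bar r_j^{\epsilon_j}\, \bar v_j^{-1} \quad\text{in } F(\bar X),\qquad m' \le f(|a|_X) \le f(n),
\]
and pulling back via $s$ yields $u = \prod_j v_j r_j^{\epsilon_j} v_j^{-1}$ in $F(X)$ with $v_j = s(\bar v_j)$ and $r_j = s(\bar r_j)$. The key identity inside $K * F(X)$ is $r_j = \rho_j k_{r_j}$ where $\rho_j := r_j k_{r_j}^{-1} \in R$, so for $\epsilon_j = +1$
\[
 v_j r_j v_j^{-1} \;=\; (v_j \rho_j v_j^{-1}) \cdot (v_j k_{r_j} v_j^{-1}),
\]
an $R$-conjugate followed by a conjugated $K$-letter that is itself exchanged for a single $K$-letter by one further $T'$-conjugate; the case $\epsilon_j = -1$ is symmetric. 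Commuting the stray $K$-letters past the subsequent relator-conjugates only enlarges their conjugators, so after collecting everything together with $k^\ast$ the remaining trailing $K$-letter lies in $\ker(\theta)$ and is therefore trivial by injectivity of $\theta|_K$. Altogether $a$ is expressed as $m_K + 2 m' \le n + 2 f(n)$ conjugates of relators from $R \cup T'$.

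The main point requiring care is this last bookkeeping step: one must verify that commuting a $K$-letter past a conjugate of an $R$- or $T'$-relator yields another conjugate of the same relator without adding new ones. This is immediate because the conjugator in the definition of relative isoperimetric function is allowed to be any element of $K * F(X)$, so once the two stages are set up the proof reduces to careful counting.
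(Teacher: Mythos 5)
Your proof is correct and follows essentially the same route as the paper: push the $K$-letters to one side using at most $n$ relators from $T'$, apply the isoperimetric function of $\bar G$ to the resulting $F(X)$-word, and convert each lifted relator with one $R$-relator plus one $T'$-relator, leaving a trailing $K$-word that is absorbed by $R_K$. The bookkeeping you flag (commuting stray $K$-letters past relator-conjugates only changes conjugators) is exactly how the paper's count of $2f(n)+n$ is obtained as well.
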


    \begin{proof}
        Consider any $a\in \ker(\theta)$ of word length $n$. Without loss of generality, assume that the normal form $w_a$ of $a$ starts with a $K$-letter and ends with an $X$-letter. There is an expression 
        \[w_a=\prod_{i=1}^mk_iw_i,\]
        where $w_i\in X^*$. We need to reduce $w_a$ to the empty word in the group $G$ using relations from $R_K\cup R\cup T'$ and bound the number of times a relation from $R\cup T'$ is used. 

        It is easy to see that 
        \[w_a=_{K*F(X)}\prod_{i=1}^m w_i\cdot \prod_{i=1}^mv_i,\]
        where $v_i=(\prod_{j=i}^mw_j)^{-1}\cdot k_i\cdot \prod_{j=i}^mw_j$. 
        Thus, we can use at most $n$ relations from $T'$ to reduce $w_a$ to $w_a'=\prod_{i=1}^mw_i\in X^*$. 
        
        The word $w_a'$ projects to $\bar w_a'\in \bar X^*$ under the map $\pi$. Since $w_a$ represents the identity of $G$, $\bar w_a'$ represents the identity of $\bar G$. Therefore, there is an expression 
        \[\bar w_a'=_{F(\bar X)}\prod_{j=1}^l\bar f_j\bar r_j\bar f_j^{-1},\]
        where $\bar r_j\in R$, $\bar f_j\in X^*$, and $l\le f(n)$. By lifting this equality, we have 
        \begin{align*}
            w_a'=_{F(X)} \prod_{j=1}^lf_j r_j f_j^{-1}.
        \end{align*}
        Now we can use at most $f(n)$ relations from $R$ and at most $f(n)$ relations from $T'$ to reduce each subword $f_j r_j f_j^{-1}$ to a single $K$-letter. Till now, $w_a$ has been reduced into a word on $K$ that represents the identity. We are done since it is a relation from $R_K$. 

        During the whole process, we use at most $f(n)+n$ relations from $T'$ and $f(n)$ relations from $R$ so the conclusion follows.
    \end{proof}

    The following criterion is very useful for extending quasimorphisms from normal subgroups.

    \begin{prop}\label{CTNormal}
        Let $\varphi\in \hat Q(K)^G$. 
        Assume that the presentation $\bar G=\langle \bar X\mid \bar R\rangle$ has a linear isoperimetric function. If $\sup\{|\varphi(k_r)|\mid r\in R\}$ is finite, then $\varphi$ is linearly $(G,X)$-controlled. 
    \end{prop}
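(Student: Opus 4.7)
The plan is to apply Proposition \ref{Prop_CTviaRP} to the relative presentation $G=\langle K,X\mid R_K,R,T'\rangle$ produced by Lemma \ref{PreExtension}. Lemma \ref{LinearRIF} already equips this presentation with the relative isoperimetric function $2f(n)+n$, which is linear since $f$ is. Hence it suffices to verify $\sup\{|\tilde\varphi(r)|\mid r\in R\cup T'\}<\infty$; Proposition \ref{Prop_CTviaRP} will then conclude that $\varphi$ is $(G,X)$-controlled by a constant multiple of a linear function, which is exactly the desired linear control.

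For $r=s(\bar r)k_r^{-1}\in R$, the map $\eta$ kills all $X$-letters and leaves $\eta(r)=k_r^{-1}$, so $\tilde\varphi(r)=\varphi(k_r^{-1})$. A standard defect estimate applied to $k_r\cdot k_r^{-1}=1_K$ bounds $|\varphi(k_r^{-1})|$ in terms of $|\varphi(k_r)|$ and $D(\varphi)$; combined with the hypothesis $\sup_{\bar r\in\bar R}|\varphi(k_r)|<\infty$, this yields the required uniform bound on $R$.

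The main work, and the step I expect to be the main obstacle, is bounding $|\tilde\varphi(r)|$ on $T'$. Each such $r$ has the form $wkw^{-1}k_w^{-1}$ with $w\in X^*$ and $k\in K$, where $k_w=\theta(w)k\theta(w)^{-1}\in K$ by normality of $K$. Applying $\eta$ gives $\tilde\varphi(r)=\varphi(kk_w^{-1})$. Because $T'$ is infinite and its elements have unbounded word length as $w$ ranges over $X^*$, any bound that appeals to the word lengths of relations is unavailable here. The key observation is that $\tilde\varphi(r)$ depends on $w$ only through the $G$-conjugate $k_w$ of $k$: combining the defect inequality applied to the decomposition $k=(kk_w^{-1})\cdot k_w$ with the $G$-quasi-invariance of $\varphi$, one obtains
\[|\varphi(kk_w^{-1})|\le |\varphi(k)-\varphi(k_w)|+D(\varphi)\le D^G(\varphi)+D(\varphi),\]
uniformly in $w$ and $k$. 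This is precisely where the hypothesis $\varphi\in\hat Q(K)^G$ enters; without quasi-invariance the conjugation relations $T'$ could not be controlled uniformly. With the two sup bounds in hand, Proposition \ref{Prop_CTviaRP} applies and delivers the linear $(G,X)$-control.
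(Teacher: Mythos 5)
Your proposal is correct and follows essentially the same route as the paper: invoke Lemma \ref{LinearRIF} to get a linear relative isoperimetric function for $G=\langle K,X\mid R_K,R,T'\rangle$, then verify the finiteness hypothesis of Proposition \ref{Prop_CTviaRP} by bounding $\tilde\varphi$ on $R$ via the assumption on the $k_r$ and on $T'$ via $G$-quasi-invariance. Your constant $D^G(\varphi)+D(\varphi)$ (from the decomposition $k=(kk_w^{-1})k_w$) is even slightly sharper than the paper's $D^G(\varphi)+3D(\varphi)$, but the difference is immaterial.
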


    \begin{proof}
        By Lemma \ref{LinearRIF}, the relative presentation $G=\langle K, X\mid R_K, R, T'\rangle$ has a linear relative isoperimetric function. By Proposition \ref{Prop_CTviaRP}, we only need to prove that $\sup\{|\tilde\varphi(v)|\mid v\in R\cup T'\}<\infty$. 

        First, note that $|\tilde \varphi(rk_r^{-1})|=|\varphi(k_r^{-1})|$ for any $rk_r^{-1}\in R$. By assumption, this gives $\sup\{|\tilde\varphi(v)|\mid v\in R\}<\infty$. Second, for any $wkw^{-1}k_w^{-1}\in T'$, $k$ and $k_w$ are conjugate in $G$. Since $\varphi$ is $G$-quasi-invariant, $|\varphi(k)-\varphi(k_w)|\le D^G(\varphi)$ for any $wkw^{-1}k_w^{-1}\in T'$. Thus, 
        \begin{align*}
            |\tilde \varphi(wkw^{-1}k_w^{-1})|&=|\varphi(kk_w^{-1})|\\
            &\le |\varphi(k)-\varphi(k_w)|+3D(\varphi)\\
            &\le D^G(\varphi)+3D(\varphi).
        \end{align*} This shows that $\sup\{|\tilde\varphi(v)|\mid v\in T'\}<\infty$. 
        
        In conclusion, $\varphi$ is linearly $(G,X)$-controlled. 
    \end{proof}

    A  section $s:\bar G\to G$ is called a \textit{quasi-splitting} if the set 
    \[\Delta(s):=\{s(\bar g_2)^{-1}s(\bar g_1)^{-1}s(\bar g_1\bar g_2)\mid \bar g_1,\bar g_2\in \bar G\}\]
    is finite. Following \cite{FK16}, we say that (\ref{SES1}) quasi-splits if it admits a quasi-splitting. For example, every  section $s:\bar G\to G$ is a quasi-splitting if $\bar G$ is finite.

    \begin{cor}\label{Qsplit}
        If (\ref{SES1}) admits a quasi-splitting $s:\bar G\to G$, then any $\varphi\in \hat Q(K)^G$ is linearly $(G,s(\bar G))$-controlled. 
    \end{cor}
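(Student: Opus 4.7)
The plan is to apply Proposition \ref{CTNormal} to the tautological (Cayley-table) presentation of $\bar G$. Take $\bar X := \bar G\setminus\{1\}$ as an abstract alphabet, and let $\bar R$ consist of all length-three relators $\bar g_1\bar g_2\bar g_3^{-1}$ for triples with $\bar g_1\bar g_2=\bar g_3$ in $\bar G$. Using $s$ as the lift of $\bar X$ (and WLOG $s(1)=1$), the associated relative generating set of $G$ coincides, up to possibly adjoining $1$, with $X=s(\bar G)$ named in the statement. To invoke Proposition \ref{CTNormal} I need to check (i) a linear isoperimetric function for this presentation of $\bar G$, and (ii) a uniform bound on $|\varphi(k_r)|$ as $r$ ranges over the lifted relators $R$.

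For (i), given a freely reduced word $\bar a$ of length $n$ representing the identity of $\bar G$, one repeatedly replaces two consecutive letters $\bar g_i\bar g_{i+1}$ by the single letter $\bar g_i\bar g_{i+1}\in\bar X$ (or by the empty word, if $\bar g_i\bar g_{i+1}=1$, which is free cancellation). Each such step uses one conjugate of a relator in $\bar R$ and strictly shortens the word, so after at most $n$ relator applications the word is trivial; hence the isoperimetric function is at most $n$.

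For (ii), each $\bar r=\bar g_1\bar g_2\bar g_3^{-1}\in \bar R$ lifts to $r=s(\bar g_1)s(\bar g_2)s(\bar g_1\bar g_2)^{-1}$, and $k_r=r\in K$. A direct computation gives
\[
k_r^{-1} = s(\bar g_1\bar g_2)\cdot \delta_{\bar g_1,\bar g_2}\cdot s(\bar g_1\bar g_2)^{-1},\qquad \delta_{\bar g_1,\bar g_2}:=s(\bar g_2)^{-1}s(\bar g_1)^{-1}s(\bar g_1\bar g_2)\in\Delta(s),
\]
so each $k_r^{-1}$ is a $G$-conjugate of an element of the finite set $\Delta(s)$. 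Since $\varphi\in\hat Q(K)^G$ has finite defect, $|\varphi(k_r)|$ is bounded above by $\max_{\delta\in\Delta(s)}|\varphi(\delta)|$ plus constants depending only on $D(\varphi)$ and $D^G(\varphi)$, and hence is uniformly bounded over $r\in R$. Proposition \ref{CTNormal} then delivers the conclusion. The only mildly nontrivial point is choosing this particular presentation: with the Cayley table, the lifted relators are exactly (up to inversion and $G$-conjugation) the quasi-splitting defects $\delta_{\bar g_1,\bar g_2}$, so finiteness of $\Delta(s)$ together with $G$-quasi-invariance of $\varphi$ jointly provides both hypotheses of Proposition \ref{CTNormal}.
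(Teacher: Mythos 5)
Your proposal is correct and follows essentially the same route as the paper: take the Cayley-table (trivial) presentation of $\bar G$ with its linear isoperimetric function, observe that the lifted relators have $K$-values controlled by the finite set $\Delta(s)$, and invoke Proposition \ref{CTNormal}. The only cosmetic differences are that the paper keeps the full alphabet $\bar G$ with all relators of length at most $3$ (avoiding your handling of the identity letter and the length-two cancellations) and bounds $\varphi(k_r)$ by noting $k_r\in_G s(1)\Delta(s)^{-2}$ directly, whereas you write $k_r^{-1}$ as a conjugate of an element of $\Delta(s)$ and use $G$-quasi-invariance.
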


    \begin{proof}
        Consider the trivial presentation $\bar G=\langle\bar G\mid R'_{\bar G}\rangle$, where $R'_G\subset R_G$ consists of the words of length at most $3$. Clearly, this presentation has a linear isoperimetric function. The quasi-splitting $s$ induces a lift map $\tilde s:\bar G^*\to G^*$. Let $\bar r=\bar g_1\bar g_2\bar g_3\in R'_{\bar G}$. Since $\bar g_3=_{\bar G}\bar g_2^{-1}\bar g_1^{-1}$, we have 
        \begin{align*}
            k_{\tilde s(\bar r)}&=\tilde s(\bar g_1\bar g_2\bar g_3)\\
            &=_G s(\bar g_1)s(\bar g_2)s(\bar g_2^{-1}\bar g_1^{-1})\\
            &\subset_G s(\bar g_1)s(\bar g_1^{-1})\Delta(s)^{-1}\\
            &\subset_G s(1)\Delta(s)^{-2}.
        \end{align*}
        Since $\Delta(s)$ is finite, Proposition \ref{CTNormal} tells us that any $\varphi\in Q_{as}(K)^G$ is linearly $(G,s(\bar G))$-controlled. 
    \end{proof}

    A \emph{transversal} of a normal subgroup $K$ in $G$ is defined to be a set of coset representatives of $K$ in $G$. 

    \begin{cor}\label{RHGQuo}
        Let $G$ be a group and $K$ a normal subgroup. Assume that there exists $H<G$ such that the quotient group $G/K$ is hyperbolic relative to $H/(K\cap H)$. Let $\varphi\in Q_{as}(K)^G$. If there exists a transversal $L$ of $K\cap H$ in $H$ such that the restricted quasimorphism $\varphi|_{K\cap H}$ is $(H,L)$-controlled, then $\varphi$ extends to an antisymmetric quasimorphism on $G$. 
    \end{cor}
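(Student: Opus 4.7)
The plan is to apply Theorem~\ref{MainThm} with relative generating set $\mathcal{X} := L \sqcup X_0$ for $G$ with respect to $K$, where $X_0 \subset G$ is a lift of a finite relative generating set $\bar X_0$ of $\bar G := G/K$ relative to $\bar H := H/(K\cap H)$. Relative hyperbolicity of $\bar G$ supplies such $\bar X_0$ together with a finite relative presentation $\bar G = \langle \bar H, \bar X_0 \mid R_{\bar H}, \bar R\rangle$ admitting a linear relative isoperimetric function. Hyperbolicity of $\Gamma(G, K \sqcup \mathcal{X})$ follows from a $(1,1)$-quasi-isometry with $\Gamma(\bar G, \bar H \sqcup \bar X_0)$: the quotient $\pi: G \to \bar G$ sends $K$-edges to loops and $\mathcal{X}$-edges bijectively onto generator-edges of $\bar H \sqcup \bar X_0$ (since $L$ is a transversal), and any $\bar G$-geodesic lifts to $G$ up to a single $K$-edge correction at the endpoint.

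The substance is to show $\varphi$ is $(G,\mathcal{X})$-controlled; Theorem~\ref{MainThm} will then provide the extension. Following Proposition~\ref{CTNormal}, I would write down an explicit relative presentation of $G$ with respect to $K$ with generators $L \sqcup X_0$ and relation set $R_K \cup P_H \cup P_R \cup P_{\mathrm{conj}}'$, where $P_H$ consists of the length-$4$ words $l_1 l_2 k^{-1} l_3^{-1}$ with $l_1 l_2 = l_3 k$ in $H$ (for $l_i \in L$, $k \in K \cap H$); $P_R = \{r k_r^{-1} : \bar r \in \bar R\}$ with $r$ the canonical letter-by-letter lift of $\bar r$ via the bijections $L \leftrightarrow \bar H$ and $X_0 \leftrightarrow \bar X_0$, and $k_r := \theta(r) \in K$; and $P_{\mathrm{conj}}' = \{wkw^{-1}\bigl(\theta(w)k\theta(w)^{-1}\bigr)^{-1} : w \in F(\mathcal{X}),\ k \in K\}$. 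The boundedness hypothesis of Proposition~\ref{Prop_CTviaRP} then falls out cleanly: $R_K$ contributes zero by antisymmetry; each $P_H$-relation, viewed in $(K\cap H) * F(L)$, is a length-$4$ element of $\ker(\theta_H)$, so the $(H,L)$-control of $\varphi|_{K\cap H}$ bounds $|\tilde\varphi|$ by $g(4)$, where $g$ is its control function; $P_R$ is controlled by $\max_{\bar r\in \bar R}|\varphi(k_r)|$, finite since $\bar R$ is finite; and on $P_{\mathrm{conj}}'$ the $G$-quasi-invariance and antisymmetry of $\varphi$ yield the uniform bound $D(\varphi)+D^G(\varphi)$.

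The remaining task is to verify that this relative presentation has a linear relative isoperimetric function, adapting Lemma~\ref{LinearRIF} to the present three-layer setting. Given $a \in \ker(\theta)$ of length $n$, I would first push all $K$-letters to one side using $O(n)$ applications of $P_{\mathrm{conj}}'$ and merge them via $R_K$, reducing $a$ to the form $k_a W$ with $W \in F(\mathcal{X})$, $k_a \in K$, and $|W|\le n$. The projection $\bar W \in \bar H * F(\bar X_0)$ then represents $1$ in $\bar G$ (since $\theta(W) = k_a^{-1} \in K$), so the linear relative isoperimetric function of $\bar G$ provides an expression $\bar W = \prod_{i=1}^{m''}\bar f_i \bar r_i \bar f_i^{-1}$ with $m'' = O(n)$. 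Canonical letter-by-letter lifting yields $W^* = \prod f_i r_i f_i^{-1} \in F(\mathcal{X})$ sharing the same $\bar H * F(\bar X_0)$-projection as $W$, and $O(m'')$ applications of $P_R$ followed by $P_{\mathrm{conj}}'$ and $R_K$ reduce $W^*$ to a single $K$-element $\kappa_{W^*}$.

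The delicate point, and the main obstacle, is to bridge $W$ and $W^*$ via $P_H$-relations in $O(n)$ steps, rather than in $O(|W^*|)$ steps (the latter being potentially unbounded, since the relative isoperimetric function of $\bar G$ does not bound $|\bar f_i|$). The key observation is that canonical lifts have single-letter $L$-blocks, so $W^*$ has multi-letter $L$-blocks only at the $O(m'')$ boundaries between consecutive factors $f_i, r_i, f_i^{-1}, f_{i+1}, \ldots$, each boundary block having length at most $2$. Consequently, both $W$ and $W^*$ reduce to the same canonical form $W_c$---the canonical lift of the normal form of $\bar W$ in $\bar H * F(\bar X_0)$---using $O(|W|) = O(n)$ and $O(m'')$ applications of $P_H$, respectively, with all intermediate $K$-corrections evacuated via $P_{\mathrm{conj}}'$. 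This makes the total relation count $O(n + m'') = O(n)$, yielding the linear bound. Proposition~\ref{Prop_CTviaRP} then gives that $\varphi$ is linearly $(G,\mathcal{X})$-controlled, and Theorem~\ref{MainThm} produces the desired extension to an antisymmetric quasimorphism on $G$.
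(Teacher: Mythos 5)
Your overall strategy is the paper's: work with the relative generating set $L\sqcup X$ obtained by lifting $\bar H\sqcup\bar X$, reduce to a Proposition~\ref{CTNormal}-type criterion, bound the lifted $\bar H$-multiplication relations by $f(4)$ using the $(H,L)$-control of $\varphi|_{K\cap H}$ (this is exactly the paper's key step), use finiteness of $\bar R$ for the lifted relators, and use $G$-quasi-invariance for the conjugation relators; your explicit verification that $\Gamma(G,K\sqcup\mathcal X)$ is hyperbolic is a point the paper leaves implicit. However, there is a genuine gap in your isoperimetric step, precisely at the claim that $W^*=\prod_i f_ir_if_i^{-1}$ reduces to the canonical form $W_c$ using only $O(m'')$ applications of $P_H$. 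The relative isoperimetric function of $\bar G$ gives no bound on $|\bar f_i|$, and the reduction of $\prod_i\bar f_i\bar r_i\bar f_i^{-1}$ to the normal form of $\bar W$ in $\bar H*F(\bar X_0)$ is not localized at the $O(m'')$ factor boundaries: free cancellations of $X_0$-letters create new $L$-adjacencies deep inside the conjugators, and each time an $\bar H$-letter pair must be multiplied or cancelled with mismatched lifts a $P_H$-relation is consumed. Mismatches are not confined to boundaries, because for a mere transversal the canonical lift is not inverse-compatible ($s(\bar h^{-1})\ne s(\bar h)^{-1}$) and merged letters are lifted to different transversal representatives than their constituents; for instance, after $f_i^{-1}f_{i+1}$ collapses completely, the positively lifted words $r_i$ and $r_{i+1}$ (or deeper segments) meet, and every cancelled $\bar H$-pair there costs a relation. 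A priori the count is governed by $\sum_i|\bar f_i|$, not by $m''$; salvaging an $O(n+m'')$ bound would require a careful van Kampen-style bookkeeping that your one-sentence justification (``multi-letter $L$-blocks only at boundaries'') does not supply.

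The paper avoids this entirely: by Definition~\ref{DefRHG} together with Lemma~\ref{RelPresent}(3), relative hyperbolicity of $\bar G=G/K$ with respect to $\bar H$ yields a \emph{bounded classical} presentation $\bar G=\langle \bar H\sqcup\bar X\mid R_{\bar H}',\bar R\rangle$, where $R_{\bar H}'$ is the multiplication table of $\bar H$, with a linear isoperimetric function in the classical sense. In that presentation your $P_H$-relations are themselves relators whose number of uses is already bounded linearly by the isoperimetric function, so Lemma~\ref{LinearRIF} and Proposition~\ref{CTNormal} apply verbatim, and the only remaining input is exactly the two estimates you already have: $|\varphi(k_{s(\bar r)})|\le f(4)$ for $\bar r\in R_{\bar H}'$ via $(H,L)$-control of $\varphi|_{K\cap H}$ (using $K\cap H\ni k_{s(\bar r)}$ and the length-$4$ word $l_1l_2l_3k_{s(\bar r)}^{-1}$), and boundedness over the finite set $\bar R$. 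Replacing your bespoke filling argument by this appeal to Lemma~\ref{RelPresent}(3) closes the gap and recovers the paper's proof.
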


    \begin{proof}
        Let $\bar G:=G/K$ and $\bar H:=H/(K\cap H)$. By Definition \ref{DefRHG}, $\bar G$ has a presentation 
        \[\bar G=\langle \bar H,\bar X\mid R_{\bar H}',\bar R\rangle\]
        with a linear isoperimetric function, where $\bar R$ is finite. 

        Let $\pi:G\to \bar G$ be the quotient map, and let $L$ be the given transversal of $K\cap H$ in $H$. As before, we also use $\pi$ to denote the map from $(H\sqcup X)^*$ or $(K\sqcup L\sqcup X)^*$ to $(\bar H\sqcup\bar X)^*$ induced by $\pi$. Let $s:(\bar H\sqcup\bar X)^*\to (L\sqcup X)^*$ be a lift map. By Proposition \ref{CTNormal} and Theorem \ref{MainThm}, we only need to prove that $\sup\{|\varphi(k_{s(\bar r)})|\mid \bar r\in R_{\bar H}'\cup \bar R\}<\infty$, where $k_{s(\bar r)}\in K$ is the element represented by the word $s(\bar r)$. 

        Let $\bar r\in R_{\bar H}'$. Suppose that $s(\bar r)$ has the form $s(\bar r)=l_1l_2l_3$, where $l_i\in L$. Note that $k_{s(\bar r)}\in K\cap H$, and thus $l_1l_2l_3k_{s(\bar r)}^{-1}\in (L\sqcup (K\cap H))^*$. Since $\varphi|_{K\cap H}$ is $(H,L)$-controlled by a function $f$, we have $|\varphi(k_{s(\bar r)})|\le f(4)$. Since $\bar R$ is finite, we know that $\sup\{|\varphi(k_{s(\bar r)})|\mid \bar r\in R_{\bar H}'\cup \bar R\}<\infty$.
    \end{proof}

    Clearly, Corollary \ref{HypQuo} follows from Corollary \ref{RHGQuo}.

\subsection{Group-theoretic Dehn filling}
    
    Let $H\hookrightarrow_h (G,X)$. By Definition \ref{DefHypEmb}, there exists a strongly bounded relative presentation 
    \[G=\langle H,X\mid R_H,R\rangle\]
    with a linear relative isoperimetric function.
    Given a subgroup $N\lhd H$, the quotient group $\bar G:=G/\llangle N \rrangle^G$ is called the \emph{Dehn filling} of $G$ with respect to $N$.
    
    Let $K:=\llangle N \rrangle^G$, the normal closure of $N$ in $G$. Consider the short exact sequence
    \[1\to K\to G\xrightarrow{\pi} \bar G\to 1.\]
    Let $\bar H:=\pi(H)\cong H/(K\cap H)$ and $\bar X:=\pi(X)$. We still use $\pi$ to denote the map from $(H\sqcup X)^*$ to $(\bar H\sqcup\bar X)^*$ induced by $\pi$, where $\bar H:=\pi(H)\cong H/(K\cap H)$ and $\bar X:=\pi(X)$.

    The following is part of \cite[Theorem 7.19]{DGO17}.
    \begin{thm}\label{DGOFill}
        With the above notations, there exists a finite subset $F\subset H-\{1\}$ such that for any $N\lhd H$ satisfying that $N\cap F=\emptyset$, the following hold.
        \begin{enumerate}
            \item The natural map from $H/N$ to $\bar G$ is injective. In other words, $K\cap H=N$. 
            \item $\bar G$ has a strongly bounded relative presentation 
            $\bar G=\langle \bar H,\bar X\mid R_{\bar H},\pi(R)\rangle$
            with a linear relative isoperimetric function. 
            \item The restricted map $\pi|_X: X\to \bar X$ is bijective. 
        \end{enumerate}
    \end{thm}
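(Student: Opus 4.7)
The plan is to follow the rotating families strategy of Dahmani--Guirardel--Osin, which is the standard framework for group-theoretic Dehn filling. I would start from the given strongly bounded relative presentation $G = \langle H, X \mid R_H, R\rangle$ with linear relative isoperimetric function, so that $\Gamma := \Gamma(G, H \sqcup X)$ is hyperbolic and $G$ acts on it.

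First I would define the finite set $F \subset H - \{1\}$. Since $R$ is strongly bounded, only finitely many $H$-letters appear in relators of $R$; call this finite subset $S$. By hyperbolic embeddedness, $H$ carries a proper relative metric in which short elements act with small displacement on $\Gamma$. Take $F$ to be the union of $S$ together with all elements of $H$ whose displacement on $\Gamma$ lies below a threshold $\epsilon$ depending on the hyperbolicity constant of $\Gamma$ and the lengths of the words in $R$. The condition $N \cap F = \emptyset$ then guarantees that every non-trivial element of $N$ has large displacement on $\Gamma$, and by conjugation the same holds for each $gNg^{-1}$ acting on the coset $gH$, which is a complete subgraph in $\Gamma$.

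The next step is to invoke the very rotating families theorem: the collection $\{gNg^{-1}\}$, indexed by left cosets $gH$, forms a very rotating family on $\Gamma$ whose rotation angle tends to infinity as $\epsilon$ is chosen small. The conclusion is that the orbit space $\Gamma/K$ is hyperbolic, $\bar G$ acts on it, and each parabolic orbit has stabilizer isomorphic to $H/N$. From this, parts (1) and (3) of the theorem follow: $K \cap H = N$ because any non-identity element of $H$ lying in $K$ would come from a non-trivial rotation, hence would already belong to $N$; and $\pi|_X$ is injective because $x_1 x_2^{-1} \in K$ for distinct $x_1, x_2 \in X$ would give an element of $K$ of word length $2$ in $\Gamma$, contradicting the lower bound on rotation angle.

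For part (2), given a word $w$ in $\bar H \sqcup \bar X$ representing the identity of $\bar G$, I would lift it to a word $\tilde w$ in $H \sqcup X$ representing an element of $K$. Constructing a van Kampen diagram for $\tilde w$ over an enlarged presentation that includes conjugates of elements of $N$ as extra relators, I would apply a Greendlinger-type lemma for rotating families to strip off parabolic faces one at a time, each time losing a definite fraction of the remaining boundary. The main obstacle is controlling this peeling process to obtain a linear bound on the total number of $R$-cells and $N$-cells consumed. Once all parabolic faces are removed, the residual diagram is a genuine van Kampen diagram over the original presentation of $G$, which has linearly bounded area by hypothesis; combining this with the linear bound on stripped parabolic faces yields the desired linear relative isoperimetric function for the presentation $\bar G = \langle \bar H, \bar X \mid R_{\bar H}, \pi(R)\rangle$, which is strongly bounded since the original $R$ was.
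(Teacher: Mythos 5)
First, a point of comparison: the paper does not prove this statement at all --- it is quoted as part of \cite[Theorem 7.19]{DGO17}, so the ``paper's proof'' is a citation. Your proposal must therefore be judged as a reconstruction of the Dahmani--Guirardel--Osin argument, and as such it has concrete gaps. The most basic one is that the rotating family is set up on the wrong space with the wrong notion of smallness: every element of $H$ is a single letter of the generating set $H\sqcup X$, so it moves each vertex of its coset by distance at most $1$ in $\Gamma(G,H\sqcup X)$; ``displacement on $\Gamma$'' therefore cannot distinguish deep elements of $N$ from shallow ones, and a coset $gH$, being a complete subgraph of diameter $1$, has no apex around which anything rotates. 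The correct setup uses the proper relative metric $\hat d$ on $H$ (lengths of paths avoiding the $H$-edges of the coset) and a coned-off space whose apices the conjugates of $N$ fix; the hypothesis $N\cap F=\emptyset$ is exactly a lower bound on $\hat d$-lengths of nontrivial elements of $N$, which is what makes the family very rotating. Relatedly, your argument for (3) fails as stated: an element of $K$ of word length $2$ in $\Gamma$ is not by itself a contradiction, since $K$ contains all of $N\subset H$, whose nontrivial elements have word length $1$. What is needed is a Greendlinger-type statement that short nontrivial elements of $K$ lie in conjugates of $N$, together with enlarging $F$ to contain the finitely many $h\in H-\{1\}$ with $\hat d(1,h)\le 2$, so that $x_1x_2^{-1}\in H$ forces $x_1x_2^{-1}\notin N$.

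The more serious issue is part (2), which is the technical core of the cited theorem and which your sketch only asserts. The ``strip off parabolic faces, losing a definite fraction of the boundary each time'' step, with constants uniform in the choice of $N$, and the resulting linear relative isoperimetric function for $\langle \bar H,\bar X\mid R_{\bar H},\pi(R)\rangle$, is precisely the diagram-surgery argument of Osin's peripheral fillings theorem as generalized in \cite{DGO17}; you identify it as ``the main obstacle'' but do not supply the lemma, the induction, or the uniformity of constants, and your accounting also conflates $N$-cells (which are absorbed into $R_{\bar H}$ for free) with $\pi(R)$-cells (which are the ones that must be counted linearly). So the proposal is a reasonable outline of the standard machinery, but with the incorrect setup of the rotating family, the flawed argument for (3), and the unproven isoperimetric estimate in (2), it does not constitute a proof of the statement; for the purposes of this paper the statement should simply be attributed to \cite[Theorem 7.19]{DGO17}, as is done in the text.
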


    Now we can state the main result of this subsection. It shows that the extendability of a quasimorphism $\varphi\in Q_{as}(K)^G$ only depends on its restriction on $N$. 
    
    \begin{thm}\label{QMDGOFill}
        Let $H\hookrightarrow_h G$, and let $F$ be the finite subset provided by Theorem \ref{DGOFill}. Let $N\lhd H$ be a subgroup such that $N\cap F=\emptyset$. Let $K=\llangle N \rrangle^G$ and $\varphi\in Q_{as}(K)^G$. If there exists a transversal $L$ of $N$ in $H$ such that the restricted quasimorphism $\varphi|_N$ is $(H,L)$-controlled, then $\varphi$ extends to $G$. 
    \end{thm}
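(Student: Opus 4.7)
The plan is to apply Theorem \ref{MainThm} with the relative generating set $L \sqcup X$ of $G$ with respect to $K$. This is a relative generating set because $L$ is a transversal of $N$ in $H$, so $H = NL \subset KL$, and $\langle H, X \rangle = G$. Two things must be verified: that $\Gamma(G, K \sqcup L \sqcup X)$ is hyperbolic, and that $\varphi$ is $(G, L \sqcup X)$-controlled.

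For hyperbolicity, Theorem \ref{DGOFill}(2) yields a strongly bounded relative presentation $\bar G = \langle \bar H, \bar X \mid R_{\bar H}, \pi(R) \rangle$ with linear relative isoperimetric function, so Lemma \ref{RelPresent} shows $\Gamma(\bar G, \bar H \sqcup \bar X)$ is hyperbolic. Since $L$ is a transversal of $N$ in $H$, the projection $\pi$ identifies $L$ with $\bar H$, so $\Gamma(\bar G, \bar H \sqcup \bar X) = \Gamma(\bar G, \bar L \sqcup \bar X)$. The natural projection $\Gamma(G, K \sqcup L \sqcup X) \to \Gamma(\bar G, \bar L \sqcup \bar X)$ collapses each $K$-coset to a point; because any two elements of a $K$-coset are joined by a single $K$-edge in $\Gamma(G, K \sqcup L \sqcup X)$, this map is a $(1,1)$-quasi-isometry, and hyperbolicity transfers.

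For controlledness, I build a relative presentation of $G$ via the machinery of Lemma \ref{PreExtension} and Lemma \ref{LinearRIF}. The relations form four families: the multiplication table $R_K$ of $K$; the lifted multiplication table $T_0 = \{l_1 l_2 n^{-1} l_3^{-1}\}$ of $\bar H$, where $l_i \in L$ satisfy $l_1 l_2 = l_3 n$ in $H$ with $n \in N$; lifted relations $R_0 = \{r' k_{r'}^{-1} \mid \bar r \in \pi(R)\}$, where $r' \in (L \sqcup X)^*$ is obtained from $\pi(r)$ by replacing each $\bar h$-letter with its unique $L$-preimage and each $\bar x$-letter by the corresponding $x$ (using Theorem \ref{DGOFill}(3)), and $k_{r'} \in K$ is the element that $r'$ represents; and the conjugation relations $T' = \{wkw^{-1} k_w^{-1} \mid w \in (L \sqcup X)^*,\, k \in K\}$. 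A Bryant Park style argument parallel to Lemma \ref{LinearRIF} shows this relative presentation has a linear relative isoperimetric function, so by the first direction of Proposition \ref{Prop_CTviaRP} it suffices to bound $|\tilde\varphi(v)|$ uniformly on the relations. On $R_K$, antisymmetry gives $\tilde\varphi = \varphi(1) = 0$; on $T'$ the value $\varphi(kk_w^{-1})$ is bounded by $D^G(\varphi) + D(\varphi)$ by $G$-quasi-invariance; on $T_0$ the value $\varphi(n^{-1})$ is controlled by $f(4)$, where $f$ is the function $(H,L)$-controlling $\varphi|_N$, because $l_1 l_2 n^{-1} l_3^{-1}$ lies in $\ker(N * F(L) \to H)$ and has length $4$.

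The main obstacle is bounding $|\varphi(k_{r'})|$ on $R_0$: the lift $r'$ has bounded length, but may involve $X$-letters from an infinite set, and a priori $k_{r'}$ is an arbitrary element of $K$. The key observation is that the relative presentation of $G$ with respect to $H$ is strongly bounded, so the $H$-letters appearing in $R$ come from a finite set $H_0 \subset H$. Writing $r = h_1 x_1^{\epsilon_1} h_2 x_2^{\epsilon_2} \cdots h_m$ and $r' = l_{\bar h_1} x_1^{\epsilon_1} l_{\bar h_2} x_2^{\epsilon_2} \cdots l_{\bar h_m}$, a telescoping identity based on $r = 1$ in $G$ rewrites $k_{r'} = r' r^{-1}$ as a product $n_1 \cdot n_2' \cdots n_m'$ in $K$, where $n_i := l_{\bar h_i} h_i^{-1} \in N$ and each $n_i'$ is a $G$-conjugate of $n_i$. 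Since $H_0$ is finite, the $n_i$ vary in a finite subset of $N$, so $\max_i |\varphi(n_i)|$ is finite; combining this with $G$-quasi-invariance and the defect of $\varphi$ gives the desired uniform bound on $|\varphi(k_{r'})|$. With both conditions verified, Theorem \ref{MainThm} applies and extends $\varphi$ to a quasimorphism on $G$.
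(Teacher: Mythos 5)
Your proof is correct and follows essentially the same route as the paper: lift the strongly bounded presentation of the filled quotient from Theorem \ref{DGOFill}, bound $\tilde\varphi$ on the lifted multiplication-table relators using the $(H,L)$-control of $\varphi|_N$ and on the lifted relators from $R$ via the telescoping into $G$-conjugates of a finite set of elements of $N$ (using strong boundedness and $G$-quasi-invariance), and then invoke Proposition \ref{Prop_CTviaRP} (equivalently Proposition \ref{CTNormal}, which packages your $T'$ and Bryant Park steps) together with Theorem \ref{MainThm}. Your explicit verification that $\Gamma(G,K\sqcup L\sqcup X)$ is hyperbolic via the quasi-isometry with $\Gamma(\bar G,\bar H\sqcup\bar X)$ is a hypothesis check the paper leaves implicit, and it is the intended argument.
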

    
    \begin{proof}
        Let $X\subset G$ be a relative generating set such that $H\hookrightarrow_h (G,X)$. Let $s:(\bar H\sqcup\bar X)^*\to (L\sqcup X)^*$ be the lift map induced by the transversal $L$ and the map $(\pi|_X)^{-1}$. By Theorem \ref{DGOFill}(2) and Lemma \ref{RelPresent}, $\bar G$ has a strongly bounded presentation 
        \[\bar G=\langle \bar H,\bar X\mid R_{\bar H}',\pi(R)\rangle\]
        with a linear isoperimetric function. 
        By Proposition \ref{CTNormal} and Theorem \ref{MainThm}, we only need to prove that $\sup\{|\varphi(k_{s(\bar r)})|\mid \bar r\in R_{\bar H}'\cup \pi(R)\}<\infty$, where $k_{s(\bar r)}\in K$ is the element represented by the word $s(\bar r)$. 
        
        \textit{Case 1.} Let $\bar r\in R_{\bar H}'$. Suppose that $s(\bar r)$ has the form $s(\bar r)=l_1l_2l_3$, where $l_i\in L$. Note that $k_{s(\bar r)}\in K\cap H=N$, and thus $l_1l_2l_3k_{s(\bar r)}^{-1}\in (L\sqcup N)^*$. Since $\varphi|_N$ is $(H,L)$-controlled by a function $f$, we have $|\varphi(k_{s(\bar r)})|\le f(4)$. 
        
        \textit{Case 2.} Let $\bar r=\pi(r)$, where $r\in R$. Suppose that $r$ has the form
        \[r=\prod_{i=1}^m w_ih_i,\]
        where $w_i\in X^*$ and $h_i\in H$. Each $h_i$ can be represented by a unique word $l_in_i$, where $l_i\in L$ and $n_i\in N$. Specifically, $l_i=s(\pi(h_i))$ and $n_i=_Hs(\pi(h_i))^{-1}h_i$. By definition, 
        \[s(\bar r)=\prod_{i=1}^m w_il_i.\]
        It is easy to see that 
        \[r=_{K*F(L\sqcup X)}\prod_{i=1}^m w_il_i\cdot \prod_{i=1}^mv_i,\]
        where $v_i=(\prod_{j=i+1}^mw_jl_j)^{-1}\cdot n_i\cdot \prod_{j=i+1}^mw_jl_j$. Therefore, 
        \[k_{s(\bar r)}=_G(\prod_{i=1}^m v_i)^{-1}.\]
        Let $k_i\in K$ be the element represented by $v_i$.  
        Since $\varphi$ is $G$-quasi-invariant, $|\varphi(k_i)-\varphi(n_i)|\le D^G(\varphi)$. Therefore, 
        \begin{align*}
            |\varphi(k_{s(\bar r)})| &\le \sum_{i=1}^m|\varphi(k_i)|+(m-1)D(\varphi)\\
            &\le \sum_{i=1}^m|\varphi(n_i)|+mD^G(\varphi)+(m-1)D(\varphi).
        \end{align*}
        Since $R$ is bounded, the number $m$ is uniformly bounded. Since there are only finitely many $h\in H$ that appear in words from $R$, the set 
        \[\{n\in N\mid n=s(\pi(h))^{-1}h, \text{ where } h\in H \text{ appear in words from $R$}\}\]
        is finite. Thus, $\sup\{|\varphi(k_{s(\bar r)})|\mid \bar r\in \pi(R)\}$ is finite. 
    \end{proof}

\subsection{Mapping class groups}
    A \emph{finite-type} surface is a closed surface with finitely many punctures. Let $\Sigma$ be a finite-type surface, possibly with boundary $\partial\Sigma$. 
    The mapping class group $\mcg(\Sigma)$ is the group of homeomorphisms of $\Sigma$ restricting to the identity on $\partial\Sigma$, modulo isotopy. If there are punctures $P=\{p_1,\dots,p_n\}$ on $\Sigma$, we define $\mcg(\Sigma,P)$ to be the finite-index subgroup of $\mcg(\Sigma)$ that fixes each puncture. 
    
    \begin{thm}\label{FillpA}
        Let $\Sigma$ be a finite-type surface, and let $G=\mcg(\Sigma)$. Let $g$ be a pseudo-Anosov mapping class. There exists $N\ge 1$ such that the following holds. For any $k\in\N$, any antisymmetric $G$-quasi-invariant quasimorphism on $\llangle g^{kN}\rrangle^G$ extends to $G$. 
    \end{thm}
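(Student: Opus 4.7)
The plan is to apply Theorem \ref{QMDGOFill} to the subgroup $H:=E(g)$, the maximal elementary (virtually cyclic) subgroup of $G$ containing $g$. Since $g$ is pseudo-Anosov, it is loxodromic for the action of $\mcg(\Sigma)$ on the curve graph, which is acylindrical, so the Dahmani--Guirardel--Osin machinery \cite{DGO17} yields $E(g)\hookrightarrow_h G$. Moreover $E(g)$ is virtually infinite cyclic; every $h\in E(g)$ acts on the maximal infinite cyclic subgroup of $E(g)$ by $\pm 1$, so for every $n\ge 1$ the subgroup $\langle g^n\rangle$ is normal in $H$ and has finite index there.

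Let $F\subset H-\{1\}$ be the finite subset produced by Theorem \ref{DGOFill} applied to $H\hookrightarrow_h G$. Since $F\cap\langle g\rangle$ is finite and $g$ has infinite order, I choose $N\ge 1$ strictly larger than $\max\{|m|:g^m\in F\cap\langle g\rangle\}$. For every $k\ge 1$, setting $N_k:=\langle g^{kN}\rangle$, each nontrivial element $g^{jkN}\in N_k$ satisfies $|jkN|\ge N$ and so cannot equal any $g^m\in F$; elements of $F$ outside $\langle g\rangle$ cannot lie in $N_k\subset\langle g\rangle$ either. Hence $N_k\cap F=\emptyset$, and the Dehn filling hypothesis of Theorem \ref{QMDGOFill} holds for $N_k$.

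Fix $k\ge 1$, set $K:=\llangle N_k\rrangle^G$, and let $\varphi\in Q_{as}(K)^G$. Since $[H:N_k]<\infty$, any set-theoretic section $s:H/N_k\to H$ of the exact sequence $1\to N_k\to H\to H/N_k\to 1$ is automatically a quasi-splitting, $\Delta(s)$ being finite. The restriction $\varphi|_{N_k}$ is $H$-quasi-invariant, because $H\le G$ and $\varphi$ is $G$-quasi-invariant. Corollary \ref{Qsplit}, applied with $(H,N_k)$ in the role of $(G,K)$, then yields that $\varphi|_{N_k}$ is linearly $(H,L)$-controlled with respect to the finite transversal $L:=s(H/N_k)$ of $N_k$ in $H$. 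Theorem \ref{QMDGOFill} now produces the desired extension of $\varphi$ to $G$.

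The only nontrivial point is the arithmetic choice of $N$ which dodges the finite obstruction set $F$ uniformly in $k$; everything else follows formally once $E(g)\hookrightarrow_h G$ is available, since $N_k$ has finite index in $H$ and the control hypothesis of Theorem \ref{QMDGOFill} is then delivered for free by Corollary \ref{Qsplit}.
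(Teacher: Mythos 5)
Your overall route is exactly the paper's: get $E(g)\hookrightarrow_h G$ from the curve graph action, pick $N$ so that the powers of $g$ dodge the finite set $F$ of Theorem \ref{DGOFill}, use finite index plus Corollary \ref{Qsplit} to verify the control hypothesis, and conclude with Theorem \ref{QMDGOFill}. However, there is one genuine gap: your claim that ``every $h\in E(g)$ acts on the maximal infinite cyclic subgroup of $E(g)$ by $\pm1$, so for every $n\ge 1$ the subgroup $\langle g^n\rangle$ is normal in $H$.'' First, $E(g)$ need not have a canonical maximal infinite cyclic subgroup containing $g$. Second, the characterization of $E(g)$ only gives, for each $h$, \emph{some} exponent $n(h)$ with $hg^{n(h)}h^{-1}=g^{\pm n(h)}$; this does not make $\langle g^n\rangle$ $h$-invariant for every fixed $n$. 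Indeed, in a general virtually cyclic group the claim is false: in $\Z\times S_3$ with $g=(1,\tau)$ for a transposition $\tau$, the subgroup $\langle g^n\rangle$ is not normal for odd $n$, even though every element conjugates a suitable power of $g$ to itself. For a pseudo-Anosov in $\mcg(\Sigma)$ the group $E(g)$ can contain torsion (when $\Sigma$ is closed or punctured), so ruling out such behaviour would need an argument you do not give. Normality of $N_k=\langle g^{kN}\rangle$ in $H$ is not cosmetic: Theorem \ref{QMDGOFill} assumes $N\lhd H$, and Corollary \ref{Qsplit} needs the quotient group $H/N_k$ to exist at all. Since you choose $N$ only large enough to avoid $F$, your $N_k$ may fail this hypothesis.

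The fix is precisely what the paper does in its proof of Theorem \ref{FillpA}: choose $N$ not merely large but also such that $\langle g^N\rangle\lhd H$ (for instance, take $N$ divisible by the index of the normal core of $\langle g\rangle$ in the virtually cyclic group $H=E(g)$), in addition to $\langle g^N\rangle\cap F=\emptyset$. Then for every $k$ the subgroup $\langle g^{kN}\rangle$ is characteristic in $\langle g^N\rangle\cong\Z$, hence normal in $H$, finite index in $H$, and still disjoint from $F$, and the rest of your argument (quasi-invariance of $\varphi|_{N_k}$ under $H$, Corollary \ref{Qsplit} with the finite transversal $s(H/N_k)$, then Theorem \ref{QMDGOFill}) goes through verbatim and coincides with the paper's proof.
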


    \begin{proof}
        As shown in \cite{BF02}, the given $g$ acts as a WPD element on the curve graph of $\Sigma$ (see also \cite{PS17} for a simpler proof). This implies that the cyclic subgroup $\langle g\rangle$ is contained in a unique maximal elementary subgroup $H$ by \cite[Lemma 6.5]{DGO17}. Moreover, $H$ is hyperbolically embedded in $G$ by \cite[Theorem 6.8]{DGO17}. 
        
        Let $F\subset H-\{1\}$ be the finite subset provided by Theorem \ref{DGOFill}. We can choose $N$ sufficiently large such that $\langle g^N\rangle\lhd H$ and $\langle g^N\rangle\cap F=\emptyset$. Let $\varphi\in Q_{as}(K)^G$, where $K:=\llangle g^{kN}\rrangle^G$. Since $\langle g^N\rangle$ is a normal subgroup of $H$ of finite index, Corollary \ref{Qsplit} tells us that the restricted quasimorphism $\varphi|_{\langle g^N\rangle}$ is linearly $(H,L)$-controlled for any transversal $L$. 
        Finally, we can apply Theorem \ref{QMDGOFill} and conclude. 
    \end{proof}
    
    Let $\Sigma$ be a finite-type surface with nonempty boundary components $\gamma_1,\dots,\gamma_n$. 
    Let $\hat\Sigma$ be the surface obtained by gluing a once-punctured disk with puncture $p_i$ to each $\gamma_i$, and let $P=\{p_1,\dots,p_n\}$. 
    There is a central extension called the \emph{capping sequence}
    \begin{align*}
        1\to T_{\Gamma} \to \mcg(\Sigma)\xrightarrow{\pi} \mcg(\hat\Sigma,P)\to 1,
    \end{align*}
    where $T_{\Gamma}$ is the free abelian subgroup generated by Dehn twists around every $\gamma_i$ (see \cite[\S 3.6]{FM12}). 
    By \cite[Propositon 6.4]{FK16}, the capping sequence has a bounded Euler class. Let $g$ be a pseudo-Anosov mapping class of $\Sigma$, and let $K:=\llangle g^{kN}\rrangle^G$, where $N\in\N$ is provided by Theorem \ref{FillpA} and $k\in\N$ is arbitrary. By \cite[Proposition 4.4]{FMS25}, $K\cap  T_{\Gamma}=\emptyset$. Thus, we obtain a quotient central extension
    \begin{align}\label{QuoCent}
            1\to T_{\Gamma}\to \mcg(\Sigma)/K\to \mcg(\hat\Sigma,P)/\pi(K)\to 1.
    \end{align}

    \cite[Proposition 2.16]{FMS25} says that if a certain (antisymmetric) quasi-invariant quasimorphism on $\pi(K)$ extends to $\mcg(\hat\Sigma,P)$, then the quotient central extension (\ref{QuoCent}) has a bounded Euler class. Theorem \ref{FillpA} shows that it is indeed the case here. 
    
    \begin{cor}
        The quotient central extension (\ref{QuoCent}) has a bounded Euler class. 
    \end{cor}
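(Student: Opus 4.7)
The plan is to apply \cite[Proposition 2.16]{FMS25}, which reduces the bounded Euler class of (\ref{QuoCent}) to showing that a specific antisymmetric $\mcg(\hat\Sigma,P)$-quasi-invariant quasimorphism on $\pi(K)$ extends to $\mcg(\hat\Sigma,P)$. Supplying this extension is exactly the kind of task that the extension machinery developed in this paper is designed for, so once the input is provided the corollary follows immediately.

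To produce the extension, I would apply Theorem \ref{FillpA} to the pair $(G',\pi(g))$, where $G':=\mcg(\hat\Sigma,P)$. First one needs to check that Theorem \ref{FillpA} is available in this setting: $\pi(g)$ lies in $\mcg(\hat\Sigma,P)$ because elements of $\mcg(\Sigma)$ fix $\partial\Sigma$ pointwise, and it is pseudo-Anosov on $\hat\Sigma$ because $g$ is pseudo-Anosov on $\Sigma$. The argument of Theorem \ref{FillpA} then carries over verbatim, using that $\pi(g)$ acts as a WPD isometry on the curve graph of $\hat\Sigma$ and that its unique maximal elementary overgroup in $G'$ is hyperbolically embedded by \cite[Theorem 6.8]{DGO17}. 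Theorem \ref{QMDGOFill} applied to this hyperbolically embedded subgroup yields an integer $N'$ such that every antisymmetric $G'$-quasi-invariant quasimorphism on $\llangle \pi(g)^{kN'}\rrangle^{G'}$ extends to $G'$.

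I would then enlarge the $N$ in the statement to be a common multiple of the original $N$ from Theorem \ref{FillpA} (applied to $\mcg(\Sigma)$) and this $N'$, so that $K=\llangle g^{kN}\rrangle^{\mcg(\Sigma)}$ and $\pi(K)=\llangle \pi(g)^{kN}\rrangle^{G'}$ simultaneously enjoy the respective extension properties; this does not disturb the statement of Corollary \ref{QuoMCG}. The distinguished quasimorphism on $\pi(K)$ supplied by \cite[Proposition 2.16]{FMS25} is antisymmetric and $G'$-quasi-invariant by construction, since it is manufactured from the bounded Euler class of the original capping sequence, bounded by \cite[Proposition 6.4]{FK16}. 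Hence it extends to $G'$ by the previous step, and \cite[Proposition 2.16]{FMS25} produces the desired bounded Euler class for (\ref{QuoCent}).

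The main subtlety I anticipate is confirming that Theorem \ref{FillpA} and Theorem \ref{QMDGOFill} pass cleanly to the finite-index subgroup $\mcg(\hat\Sigma,P)\le \mcg(\hat\Sigma)$ with the correct normal closure $\pi(K)=\llangle\pi(g)^{kN}\rrangle^{G'}$ (rather than the possibly larger one inside $\mcg(\hat\Sigma)$); however, the WPD and hyperbolic-embedding inputs behave well under restriction to finite-index subgroups, so this is a routine adaptation rather than a genuine obstacle.
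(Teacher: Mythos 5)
Your proposal is correct and follows essentially the same route as the paper: reduce via \cite[Proposition 2.16]{FMS25} to extending the distinguished antisymmetric quasi-invariant quasimorphism on $\pi(K)$ to $\mcg(\hat\Sigma,P)$, and supply that extension by Theorem \ref{FillpA} (i.e.\ its proof run for $\mcg(\hat\Sigma,P)$ and the pseudo-Anosov $\pi(g)$, via WPD on the curve graph, the hyperbolically embedded maximal elementary subgroup, and Theorem \ref{QMDGOFill}). You are in fact more explicit than the paper's one-line justification about the finite-index adaptation and about taking $N$ to be a common multiple so that both the hatted and unhatted filling statements hold, which are exactly the details the paper leaves implicit.
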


    The base group $\mcg(\hat\Sigma,P)/\pi(K)$ is known to be hierarchically hyperbolic by \cite[Theorem 6.2]{BHS17a}. Using \cite[Theorem 5.14]{AHPZ23}, we have shown that

    \begin{cor}\label{QuoMCG}
        $\mcg(\Sigma)/K$ is hierarchically hyperbolic. 
    \end{cor}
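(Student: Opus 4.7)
The plan is a short assembly of three ingredients, all stated or proved immediately before this corollary. First, the base group of the quotient central extension (\ref{QuoCent}) is hierarchically hyperbolic: this is exactly \cite[Theorem 6.2]{BHS17a}, which establishes hierarchical hyperbolicity for Dehn fillings of mapping class groups of finite-type surfaces along the normal closure of a sufficiently large power of a pseudo-Anosov element. Taking $N$ large enough (replacing the one from Theorem \ref{FillpA} by a common multiple if necessary) ensures that the hypotheses of BHS17a are simultaneously satisfied. Second, the preceding corollary provides that the Euler class of (\ref{QuoCent}) is bounded. Third, \cite[Theorem 5.14]{AHPZ23} promotes a hierarchically hyperbolic base to a hierarchically hyperbolic total space for central extensions by finitely generated free abelian groups whose Euler class is bounded; since $T_\Gamma$ is free abelian of rank equal to the number of boundary components of $\Sigma$, this applies directly to (\ref{QuoCent}) and yields the desired conclusion.

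There is no real obstacle at this assembly stage; the genuine content of the paper has been deployed upstream. Theorem \ref{FillpA} is what makes the antisymmetric $G$-quasi-invariant quasimorphism on $\pi(K)$ constructed in \cite[Proposition 2.16]{FMS25} extend to $\mcg(\hat\Sigma,P)$; that extendability, combined with Lemma \ref{BddEu} and the translation carried out in \cite[Proposition 2.16]{FMS25}, is precisely what delivers the bounded Euler class of (\ref{QuoCent}). Once these inputs are in hand, Corollary \ref{QuoMCG} follows immediately from the two cited structural theorems, and the statement provides the affirmative answer to \cite[Question 4.2]{FMS25}.
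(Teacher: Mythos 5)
Your assembly is exactly the paper's: hierarchical hyperbolicity of the base $\mcg(\hat\Sigma,P)/\pi(K)$ via \cite[Theorem 6.2]{BHS17a}, boundedness of the Euler class of (\ref{QuoCent}) from the preceding corollary (which rests on Theorem \ref{FillpA} and \cite[Proposition 2.16]{FMS25}), and \cite[Theorem 5.14]{AHPZ23} to promote the bounded central extension by $T_\Gamma$ to a hierarchically hyperbolic total group. Your explicit remark about enlarging $N$ (taking a common multiple) so that the hypotheses of the BHS filling theorem and of Theorem \ref{FillpA} hold simultaneously is a sensible precaution the paper leaves implicit, but it does not change the argument.
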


    This answers \cite[Question 4.2]{FMS25} affirmatively. 

\subsection{Mixed stable commutator length}

Let $G$ be a group and $K$ a normal subgroup. In their paper \cite{KKMM22}, Kawasaki, Kimura, Matsushita, and Mimura introduce the (stable) mixed commutator length on the subgroup $[G,K]$, generalizing the classical (stable) commutator length defined on commutator subgroups. We briefly review their definitions. 

A $(G,K)$-\emph{commutator} is an element in $[G,K]$ of the form $[g,k]=gkg^{-1}k^{-1}$ for some $g\in G$ and $k\in K$. Let $[G,K]$ be the subgroup generated by $(G,K)$-commutators. 
The $(G,K)$-\emph{commutator length} $\cl_{G,K}(g)$ of an element $g\in [G,K]$ is defined to be the word length of $g$ with respect to the generating set consisting of all $(G,K)$-commutators. It is known that the limit
\[\scl_{G,K}(g)=\lim_{n\to \infty}\frac{\cl_{G,K}(g^n)}{n}\]
exists, and we call $\scl_{G,K}(g)$ the stable $(G,K)$-commutator length of $g$. When $K$ equals $G$, $\cl_{G,G}$ and $\scl_{G,G}$ equal the commutator length and the stable commutator length, respectively. In this case, we write $\cl_G$ and $\scl_G$ instead.

The celebrated Bavard's duality theorem \cite{Bav91} (see also \cite{Cal09a}) relates stable commutator length to homogeneous quasimorphisms. Recently, this duality is generalized to mixed stable commutator length and invariant quasimorphisms in \cite{KKMM22}. In particular, for any $g\in [G,K]$, $\scl_{G,K}(g)>0$ if and only if there is a $G$-invariant quasimorphism $\varphi$ on $K$ with $\varphi(g)\ne 0$. Here we say that a quasimorphism $\varphi$ on $K$ is $G$-\emph{invariant} if $\varphi(gkg^{-1})=\varphi(k)$ for any $g\in G$ and $k\in K$. Clearly, every $G$-invariant quasimorphism on $K$ is antisymmetric and $G$-quasi-invariant. 
From this duality, Kawasaki, Kimura, Maruyama, Matsushita, and Mimura \cite{KKMMM25} prove the following as part of their Theorem 2.1.

\begin{thm}\label{biLip}
    If every $G$-invariant quasimorphism on $K$ extends to $G$, then $\scl_G$ and $\scl_{G,K}$ are bi-Lipschitz equivalent on $[G,K]$.
\end{thm}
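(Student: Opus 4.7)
The plan is to derive the bi-Lipschitz equivalence from Bavard duality together with a quantitative upgrade of the extension hypothesis. The easy direction $\scl_G\le\scl_{G,K}$ on $[G,K]$ is immediate, since every $(G,K)$-commutator is a $G$-commutator, so $\cl_G\le\cl_{G,K}$ pointwise and the inequality passes to the stable versions by dividing by $n$ and taking the limit.

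For the reverse inequality I would combine classical Bavard duality $\scl_G(g)=\sup_\Psi \frac{|\Psi(g)|}{2D(\Psi)}$ (supremum over nontrivial homogeneous quasimorphisms on $G$) with its mixed analogue from \cite{KKMM22}, $\scl_{G,K}(g)=\sup_\varphi\frac{|\varphi(g)|}{2D(\varphi)}$ (supremum over nontrivial $G$-invariant homogeneous quasimorphisms on $K$). The problem then reduces to producing a constant $C>0$ such that every $G$-invariant homogeneous $\varphi$ on $K$ admits a homogeneous extension $\Phi$ to $G$ with $D(\Phi)\le C\,D(\varphi)$, because for $g\in[G,K]$ this would yield
\[\frac{|\varphi(g)|}{2D(\varphi)}=\frac{|\Phi(g)|}{2D(\varphi)}\le C\cdot\frac{|\Phi(g)|}{2D(\Phi)}\le C\,\scl_G(g),\]
and taking the supremum gives $\scl_{G,K}\le C\,\scl_G$ on $[G,K]$.

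The heart of the argument is to upgrade the qualitative hypothesis to such a uniform defect bound, and I would do this via the open mapping theorem. Let $W$ be the space of homogeneous quasimorphisms on $G$ modulo homomorphisms, and $V$ the space of $G$-invariant homogeneous quasimorphisms on $K$ modulo $G$-invariant homomorphisms; both are Banach spaces with the defect as norm. Restriction induces a bounded linear map $\bar r\colon W\to V$ of operator norm at most $1$. Given any $\varphi$ representing a class in $V$, the hypothesis provides some extension of $\varphi$ to $G$; homogenizing it produces a homogeneous extension (and since homogeneous quasimorphisms on $G$ are automatically conjugation invariant, its restriction to $K$ remains $G$-invariant). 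Thus $\bar r$ is surjective, and the open mapping theorem yields a constant $C>0$ and, for each $\varphi$, a homogeneous $\Phi$ on $G$ satisfying $\Phi|_K=\varphi+h$ for some $G$-invariant homomorphism $h\colon K\to\R$, with $D(\Phi)\le C\,D(\varphi)$.

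The correction term $h$ is harmless on $[G,K]$: additivity together with $G$-invariance of $h$ gives $h([g,k])=h(gkg^{-1})-h(k)=0$, so $h$ vanishes on $[G,K]$ and $\Phi(g)=\varphi(g)$ for every $g\in[G,K]$, which is all that is needed in the Bavard estimate above. The delicate point in this plan is the open mapping step: one must check that the relevant subspaces of $G$-invariant quasimorphisms and homomorphisms are closed in the defect seminorm so that the quotients really are Banach spaces, and that the homogenization trick upgrades the bare existence hypothesis to genuine surjectivity of $\bar r$ onto $V$, rather than merely a dense image.
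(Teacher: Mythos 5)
First, a point of comparison: the paper does not prove Theorem \ref{biLip} at all --- it is quoted from \cite{KKMMM25} (part of their Theorem 2.1) --- so your proposal has to be judged against that reference rather than against an internal argument. Your soft steps are correct and match the known strategy: the inequality $\scl_G\le\scl_{G,K}$ on $[G,K]$ is immediate; Bavard duality together with its mixed analogue from \cite{KKMM22} reduces everything to a uniform bound $D(\Phi)\le C\,D(\varphi)$ for homogeneous extensions; homogenizing an extension of a homogeneous $\varphi$ restricts back to $\varphi$, so surjectivity of your map $\bar r$ is genuinely exact (not just dense), and you need not worry about that point; and the correction term $h=\Phi|_K-\varphi$ is automatically $G$-invariant (it is a difference of two $G$-invariant functions), hence vanishes on $[G,K]$, so the Bavard estimate goes through once the uniform constant is in hand.

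The genuine gap is the step you defer, and for one of the two spaces it is not a routine verification but essentially the theorem itself. Completeness of $W=Q(G)/H^1(G;\R)$ in the defect norm is fine and classical: a defect-Cauchy sequence has uniformly convergent coboundaries, the set of bounded $2$-cocycles that are ordinary coboundaries is closed in $Z^2_b(G)$ (vanishing in $H^2(G;\R)$ is detected by pairing with finitely supported $2$-cycles), and homogenizing a primitive of the limit cocycle produces the limit class. But running the same scheme for $V=Q(K)^G/H^1(K)^G$ only yields a homogeneous quasimorphism $\psi$ on $K$ with $D(\varphi_n-\psi)\to 0$; for each $g\in G$ the function $\psi(g\,\cdot\,g^{-1})-\psi$ then has defect zero, so $\psi$ is $G$-invariant only modulo $\mathrm{Hom}(K,\R)$, and upgrading it to a genuinely $G$-invariant quasimorphism amounts to trivializing a $1$-cocycle of $G/K$ with values in $\mathrm{Hom}(K,\R)$ --- nothing in your argument addresses this. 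Moreover, granted the rest of your setup this is not an auxiliary technicality: the ``extension norm'' $\|[\varphi]\|:=\inf\{D(\Phi)\mid \Phi\in Q(G),\ \Phi|_K\equiv\varphi \bmod H^1(K)^G\}$ makes $V$ isometric to $W/\ker\bar r$ with $\ker\bar r$ closed, hence complete, and it dominates the defect norm; so Banach-ness of $V$ in the defect norm is \emph{equivalent} to the uniform constant $C$ you are trying to produce. As written, the open mapping step therefore begs the question at its key point. To close the argument you must either prove that $Q(K)^G/H^1(K)^G$ is complete in the defect norm (this is the nontrivial analytic input that the cited source develops, via bounded-cohomological identifications, rather than something one may simply assert) or reach the uniform defect bound by another route.
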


Combining Theorem \ref{biLip} with Corollary \ref{HypQuo} and Theorem \ref{FillpA} respectively, we have

\begin{cor}
~
    \begin{enumerate}
        \item If $G/K$ is hyperbolic, then $\scl_G$ and $\scl_{G,K}$ are bi-Lipschitz equivalent on $[G,K]$.
        \item If $G=\mcg(\Sigma)$ and $K=\llangle g^{kN}\rrangle^G$ as in Theorem \ref{FillpA}, then $\scl_G$ and $\scl_{G,K}$ are bi-Lipschitz equivalent on $[G,K]$. 
    \end{enumerate}
\end{cor}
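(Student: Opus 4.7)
The plan is to observe that both parts of the corollary are immediate consequences of combining Theorem \ref{biLip} with, respectively, Corollary \ref{HypQuo} and Theorem \ref{FillpA}. The only small bridging fact I need is the observation recorded just before Theorem \ref{biLip}: every $G$-invariant quasimorphism on $K$ is automatically antisymmetric and $G$-quasi-invariant, so the class of $G$-invariant quasimorphisms on $K$ is contained in $Q_{as}(K)^G$. This containment means that whenever every element of $Q_{as}(K)^G$ extends to $G$, in particular every $G$-invariant quasimorphism on $K$ extends, which is exactly the hypothesis of Theorem \ref{biLip}.

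For part (1), I would argue as follows. Assume $G/K$ is hyperbolic. By Corollary \ref{HypQuo}, every $\varphi\in Q_{as}(K)^G$ admits an extension to a quasimorphism on $G$. Restricting this to the subclass of $G$-invariant quasimorphisms gives precisely the hypothesis of Theorem \ref{biLip}, whose conclusion is that $\scl_G$ and $\scl_{G,K}$ are bi-Lipschitz equivalent on $[G,K]$. For part (2), the argument is word-for-word the same, with Theorem \ref{FillpA} in place of Corollary \ref{HypQuo}: with $G=\mcg(\Sigma)$ and $K=\llangle g^{kN}\rrangle^G$ for $N$ provided by Theorem \ref{FillpA} and any $k\in\N$, every antisymmetric $G$-quasi-invariant quasimorphism on $K$ extends to $G$, so in particular every $G$-invariant quasimorphism on $K$ extends, and Theorem \ref{biLip} delivers the bi-Lipschitz equivalence on $[G,K]$.

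There is essentially no obstacle: both parts are formal consequences of results already assembled in the paper, and the only verification is the trivial set-theoretic containment between $G$-invariant and $G$-quasi-invariant antisymmetric quasimorphisms. The substantive mathematical work has already been done in proving the two extension theorems (Corollary \ref{HypQuo} for (1) and Theorem \ref{FillpA} for (2)) together with the Bavard-type duality argument underlying Theorem \ref{biLip}; the corollary is simply the place where these pieces get stitched together.
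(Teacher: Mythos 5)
Your proposal is correct and matches the paper's argument exactly: the corollary is obtained by combining Theorem \ref{biLip} with Corollary \ref{HypQuo} (for part (1)) and Theorem \ref{FillpA} (for part (2)), using the observation, recorded in the paper just before Theorem \ref{biLip}, that every $G$-invariant quasimorphism on $K$ is antisymmetric and $G$-quasi-invariant. No further verification is needed.
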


\printbibliography

@Article{AHPZ23,
  author     = {Abbott, Carolyn and Hagen, Mark and Petyt, Harry and Zalloum, Abdul},
  date       = {2023},
  title      = {Uniform undistortion from barycentres, and applications to hierarchically hyperbolic groups},
  doi        = {10.48550/arXiv.2305.09742},
  eprint     = {2305.09742 [math]},
  eprinttype = {arxiv},
  number     = {{arXiv}:2305.09742},
  url        = {http://arxiv.org/abs/2305.09742},
  urldate    = {2025-10-13},
  abstract   = {We show that infinite cyclic subgroups of groups acting uniformly properly on injective metric spaces are uniformly undistorted. In the special case of hierarchically hyperbolic groups, we use this to study translation lengths for actions on the associated hyperbolic spaces. We then use quasimorphisms to produce examples where these latter results are sharp.},
  file       = {Preprint PDF:http\://arxiv.org/pdf/2305.09742v2:application/pdf},
  keywords   = {Mathematics - Geometric Topology, Mathematics - Group Theory},
  publisher  = {{arXiv}},
}

@Article{Bav91,
  author       = {Bavard, Christophe},
  title        = {Longueur stable des commutateurs},
  issn         = {0013-8584},
  number       = {1},
  pages        = {109--150},
  volume       = {37},
  date         = {1991},
  journaltitle = {L'Enseignement Mathématique. Revue Internationale. 2e Série},
  mrnumber     = {1115747},
  shortjournal = {Enseign. Math. (2)},
}

@InBook{BBM20,
  author      = {Gilbert Baumslag and Martin R. Bridson and Charles F. Miller},
  booktitle   = {Elementary Theory of Groups and Group Rings, and Related Topics},
  date        = {2020},
  title       = {Isoperimetric and isodiametric functions of group extensions},
  doi         = {doi:10.1515/9783110638387-002},
  editor      = {Paul Baginski and Benjamin Fine and Anja Moldenhauer and Gerhard Rosenberger and Vladimir Shpilrain},
  isbn        = {9783110638387},
  location    = {Berlin, Boston},
  pages       = {7--10},
  publisher   = {De Gruyter},
  url         = {https://doi.org/10.1515/9783110638387-002},
  lastchecked = {2025-09-28},
}

@Article{BF02,
  author   = {Bestvina, Mladen and Fujiwara, Koji},
  title    = {Bounded cohomology of subgroups of mapping class groups},
  doi      = {10.2140/gt.2002.6.69},
  issn     = {1465-3060},
  pages    = {69--89},
  volume   = {6},
  journal  = {Geometry and Topology},
  mrnumber = {1914565},
  year     = {2002},
}

@Article{BHS17a,
  author       = {Behrstock, Jason and Hagen, Mark F. and Sisto, Alessandro},
  date         = {2017},
  journaltitle = {Proceedings of the London Mathematical Society. Third Series},
  title        = {Asymptotic dimension and small-cancellation for hierarchically hyperbolic spaces and groups},
  doi          = {10.1112/plms.12026},
  issn         = {0024-6115},
  number       = {5},
  pages        = {890--926},
  volume       = {114},
  mrnumber     = {3653249},
}

@Article{BHW22,
  author       = {Bowden, Jonathan and Hensel, Sebastian Wolfgang and Webb, Richard},
  title        = {Quasi-morphisms on surface diffeomorphism groups},
  issn         = {0894-0347,1088-6834},
  number       = {1},
  pages        = {211--231},
  volume       = {35},
  date         = {2022},
  doi          = {10.1090/jams/981},
  journaltitle = {Journal of the American Mathematical Society},
  mrnumber     = {4322392},
  shortjournal = {J. Amer. Math. Soc.},
}

@Article{BW11,
  author       = {Bridson, Martin R. and Wilton, Henry},
  date         = {2011},
  journaltitle = {Groups, Geometry, and Dynamics},
  title        = {On the difficulty of presenting finitely presentable groups},
  doi          = {10.4171/GGD/129},
  issn         = {1661-7207,1661-7215},
  number       = {2},
  pages        = {301--325},
  volume       = {5},
  mrnumber     = {2782175},
  shortjournal = {Groups Geom. Dyn.},
}

@Book{Cal09a,
  author    = {Calegari, Danny},
  date      = {2009},
  title     = {scl},
  doi       = {10.1142/e018},
  isbn      = {9784931469532},
  pagetotal = {xii+209},
  publisher = {Mathematical Society of Japan, Tokyo},
  series    = {{MSJ} Memoirs},
  volume    = {20},
  mrnumber  = {2527432},
}

@Book{DGO17,
  author    = {Dahmani, F. and Guirardel, V. and Osin, D.},
  date      = {2017},
  title     = {Hyperbolically embedded subgroups and rotating families in groups acting on hyperbolic spaces},
  doi       = {10.1090/memo/1156},
  isbn      = {9781470436018;9781470421946},
  language  = {en},
  number    = {1156},
  publisher = {American Mathematical Society},
  series    = {Memoirs of the {American} {Mathematical} {Society}},
  volume    = {245},
  abstract  = {Advancing research. Creating connections.},
  file      = {Full Text PDF:https\://www.ams.org/memo/1156/memo1156.pdf:application/pdf},
  issn      = {0065-9266, 1947-6221},
}

@Article{EF97,
  author     = {Epstein, David B. A. and Fujiwara, Koji},
  title      = {The second bounded cohomology of word-hyperbolic groups},
  doi        = {10.1016/S0040-9383(96)00046-8},
  issn       = {0040-9383},
  number     = {6},
  pages      = {1275--1289},
  volume     = {36},
  journal    = {Topology. An International Journal of Mathematics},
  mrnumber   = {1452851},
  priority   = {prio2},
  readstatus = {skimmed},
  year       = {1997},
}

@Article{EP03,
  author       = {Entov, Michael and Polterovich, Leonid},
  date         = {2003},
  journaltitle = {International Mathematics Research Notices},
  title        = {Calabi quasimorphism and quantum homology},
  doi          = {10.1155/S1073792803210011},
  issn         = {1073-7928,1687-0247},
  number       = {30},
  pages        = {1635--1676},
  mrnumber     = {1979584},
  shortjournal = {Int. Math. Res. Not.},
}

@Article{Far98,
  author       = {Farb, B.},
  date         = {1998},
  journaltitle = {Geometric and Functional Analysis},
  title        = {Relatively hyperbolic groups},
  doi          = {10.1007/s000390050075},
  issn         = {1016-443X,1420-8970},
  number       = {5},
  pages        = {810--840},
  volume       = {8},
  mrnumber     = {1650094},
  shortjournal = {Geom. Funct. Anal.},
}

@Article{FK16,
  author       = {Fujiwara, Koji and Kapovich, Michael},
  title        = {On quasihomomorphisms with noncommutative targets},
  issn         = {1016-443X,1420-8970},
  number       = {2},
  pages        = {478--519},
  volume       = {26},
  date         = {2016},
  doi          = {10.1007/s00039-016-0364-9},
  journaltitle = {Geometric and Functional Analysis},
  mrnumber     = {3513878},
  ranking      = {rank1},
  shortjournal = {Geom. Funct. Anal.},
}

@Book{FM12,
  author    = {Farb, Benson and Margalit, Dan},
  date      = {2012},
  title     = {A primer on mapping class groups},
  isbn      = {9780691147949},
  publisher = {Princeton University Press, Princeton, NJ},
  series    = {Princeton {Mathematical} {Series}},
  volume    = {49},
  mrnumber  = {2850125},
}

@Article{FMS25,
  author     = {Fournier-Facio, Francesco and Mangioni, Giorgio and Sisto, Alessandro},
  date       = {2025},
  title      = {Bounded cohomology, quotient extensions, and hierarchical hyperbolicity},
  doi        = {10.48550/arXiv.2505.20462},
  eprinttype = {arxiv},
  number     = {{arXiv}:2505.20462},
  url        = {http://arxiv.org/abs/2505.20462},
  urldate    = {2025-06-05},
  abstract   = {We call a central extension bounded if its Euler class is represented by a bounded cocycle. We prove that a bounded central extension of a hierarchically hyperbolic group ({HHG}) is still a {HHG}; conversely if a central extension is a {HHG}, then the extension is bounded, and the quotient is commensurable to a {HHG}. Motivated by questions on hierarchical hyperbolicity of quotients of mapping class groups, we therefore consider the general problem of determining when a quotient of a bounded central extension is still bounded, which we prove to be equivalent to an extendability problem for quasihomomorphisms. Finally, we show that quotients of the 4-strands braid group by suitable powers of a pseudo-Anosov are {HHG}, and in fact bounded central extensions of some {HHG}. We also speculate on how to extend the previous result to all mapping class groups.},
  file       = {Preprint PDF:http\://arxiv.org/pdf/2505.20462v1:application/pdf},
  keywords   = {Mathematics - Group Theory},
  publisher  = {{arXiv}},
}

@Article{FPS15,
  author       = {Frigerio, R. and Pozzetti, M. B. and Sisto, A.},
  date         = {2015},
  journaltitle = {Journal of Topology},
  title        = {Extending higher-dimensional quasi-cocycles},
  doi          = {10.1112/jtopol/jtv017},
  issn         = {1753-8416,1753-8424},
  number       = {4},
  pages        = {1123--1155},
  volume       = {8},
  mrnumber     = {3431671},
  shortjournal = {J. Topol.},
}

@Book{Fri17,
  author    = {Frigerio, Roberto},
  publisher = {American Mathematical Society, Providence, {RI}},
  title     = {Bounded cohomology of discrete groups},
  isbn      = {9781470441463},
  series    = {Mathematical Surveys and Monographs},
  volume    = {227},
  date      = {2017},
  doi       = {10.1090/surv/227},
  mrnumber  = {3726870},
  pagetotal = {xvi+193},
}

@Article{FS23,
  author       = {Frigerio, Roberto and Sisto, Alessandro},
  date         = {2023},
  journaltitle = {Annales Henri Lebesgue},
  title        = {Central extensions and bounded cohomology},
  doi          = {10.5802/ahl.164},
  issn         = {2644-9463},
  pages        = {225--258},
  volume       = {6},
  mrnumber     = {4648083},
  shortjournal = {Ann. H. Lebesgue},
}

@Book{HEO05,
  author    = {Holt, Derek F. and Eick, Bettina and O'Brien, Eamonn A.},
  date      = {2005},
  title     = {Handbook of computational group theory},
  doi       = {10.1201/9781420035216},
  isbn      = {9781584883722},
  pagetotal = {xvi+514},
  publisher = {Chapman \& Hall/{CRC}, Boca Raton, {FL}},
  series    = {Discrete Mathematics and its Applications (Boca Raton)},
  mrnumber  = {2129747},
}

@Article{HO13,
  author       = {Hull, Michael and Osin, Denis},
  date         = {2013},
  journaltitle = {Algebraic \& Geometric Topology},
  title        = {Induced quasicocycles on groups with hyperbolically embedded subgroups},
  doi          = {10.2140/agt.2013.13.2635},
  issn         = {1472-2747,1472-2739},
  number       = {5},
  pages        = {2635--2665},
  volume       = {13},
  mrnumber     = {3116299},
  shortjournal = {Algebr. Geom. Topol.},
}

@Book{KKM19,
  author    = {Kim, Sang-hyun and Koberda, Thomas and Mj, Mahan},
  publisher = {Springer, Cham},
  title     = {Flexibility of group actions on the circle},
  isbn      = {9783030028541 9783030028558},
  series    = {Lecture Notes in Mathematics},
  volume    = {2231},
  date      = {2019},
  doi       = {10.1007/978-3-030-02855-8},
  mrnumber  = {3887602},
  pagetotal = {x+134},
}

@Article{KKMM22,
  author       = {Kawasaki, Morimichi and Kimura, Mitsuaki and Matsushita, Takahiro and Mimura, Masato},
  date         = {2022},
  journaltitle = {L'Enseignement Mathématique},
  title        = {Bavard's duality theorem for mixed commutator length},
  doi          = {10.4171/lem/1037},
  issn         = {0013-8584,2309-4672},
  number       = {3},
  pages        = {441--481},
  volume       = {68},
  mrnumber     = {4452430},
  shortjournal = {Enseign. Math.},
}

@Article{KKMM23,
  author       = {Kawasaki, Morimichi and Kimura, Mitsuaki and Matsushita, Takahiro and Mimura, Masato},
  date         = {2023},
  journaltitle = {Geometric and Functional Analysis},
  title        = {Commuting symplectomorphisms on a surface and the flux homomorphism},
  doi          = {10.1007/s00039-023-00644-9},
  issn         = {1016-443X,1420-8970},
  number       = {5},
  pages        = {1322--1353},
  volume       = {33},
  mrnumber     = {4646410},
  shortjournal = {Geom. Funct. Anal.},
}

@Article{KKMMM24,
  author       = {Kawasaki, Morimichi and Kimura, Mitsuaki and Maruyama, Shuhei and Matsushita, Takahiro and Mimura, Masato},
  date         = {2024},
  journaltitle = {Topology Proceedings},
  title        = {Survey on invariant quasimorphisms and stable mixed commutator length},
  issn         = {0146-4124,2331-1290},
  pages        = {129--174},
  volume       = {64},
  mrnumber     = {4773366},
  shortjournal = {Topology Proc.},
}

@Article{KKMMM25,
  author       = {Kawasaki, Morimichi and Kimura, Mitsuaki and Maruyama, Shuhei and Matsushita, Takahiro and Mimura, Masato},
  date         = {2025},
  journaltitle = {Algebraic \& Geometric Topology},
  title        = {The space of nonextendable quasimorphisms},
  doi          = {10.2140/agt.2025.25.1169},
  issn         = {1472-2739},
  number       = {2},
  pages        = {1169--1226},
  url          = {https://msp.org/agt/2025/25-2/p15.xhtml},
  urldate      = {2025-08-05},
  volume       = {25},
  file         = {Full Text PDF:https\://msp.org/agt/2025/25-2/agt-v25-n2-p15-s.pdf:application/pdf},
  langid       = {english},
  publisher    = {Mathematical Sciences Publishers},
}

@Misc{KKMMM25a,
  author     = {Kawasaki, Morimichi and Kimura, Mitsuaki and Maruyama, Shuhei and Matsushita, Takahiro and Mimura, Masato},
  date       = {2025},
  title      = {Non-extendablity of Shelukhin's quasimorphism and non-triviality of Reznikov's class},
  doi        = {10.48550/arXiv.2503.09455},
  eprint     = {2503.09455 [math]},
  eprinttype = {arxiv},
  url        = {http://arxiv.org/abs/2503.09455},
  urldate    = {2025-09-14},
  abstract   = {Shelukhin constructed a quasimorphism on the universal covering of the group of Hamiltonian diffeomorphisms for a general closed symplectic manifold. In the present paper, we prove the non-extendability of that quasimorphism for certain symplectic manifolds, such as a blow-up of torus and the product of a surface of genus at least two and a closed symplectic manifold. As its application, we prove the non-vanishing of Reznikov's characteristic class for the above symplectic manifolds.},
  file       = {Preprint PDF:http\://arxiv.org/pdf/2503.09455v1:application/pdf},
  keywords   = {Mathematics - Symplectic Geometry, Mathematics - Differential Geometry, Mathematics - Group Theory, Mathematics - Geometric Topology},
  number     = {{arXiv}:2503.09455},
  publisher  = {{arXiv}},
}

@Article{NR97,
  author       = {Neumann, Walter D. and Reeves, Lawrence},
  date         = {1997},
  journaltitle = {Annals of Mathematics. Second Series},
  title        = {Central extensions of word hyperbolic groups},
  doi          = {10.2307/2951827},
  issn         = {0003-486X,1939-8980},
  number       = {1},
  pages        = {183--192},
  volume       = {145},
  mrnumber     = {1432040},
  shortjournal = {Ann. of Math. (2)},
}

@Article{Osi06,
  author       = {Osin, Denis V.},
  date         = {2006},
  journaltitle = {Memoirs of the American Mathematical Society},
  title        = {Relatively hyperbolic groups: intrinsic geometry, algebraic properties, and algorithmic problems},
  doi          = {10.1090/memo/0843},
  issn         = {0065-9266,1947-6221},
  number       = {843},
  pages        = {vi+100},
  volume       = {179},
  mrnumber     = {2182268},
  shortjournal = {Mem. Amer. Math. Soc.},
  shorttitle   = {Relatively hyperbolic groups},
}

@Book{PR14,
  author    = {Polterovich, Leonid and Rosen, Daniel},
  publisher = {American Mathematical Society, Providence, {RI}},
  title     = {Function theory on symplectic manifolds},
  isbn      = {9781470416935},
  series    = {{CRM} Monograph Series},
  volume    = {34},
  date      = {2014},
  doi       = {10.1090/crmm/034},
  mrnumber  = {3241729},
  pagetotal = {xii+203},
}

@InCollection{PS17,
  author    = {Przytycki, Piotr and Sisto, Alessandro},
  booktitle = {Hyperbolic geometry and geometric group theory},
  date      = {2017},
  title     = {A note on acylindrical hyperbolicity of mapping class groups},
  doi       = {10.2969/aspm/07310255},
  pages     = {255--264},
  publisher = {Math. Soc. Japan, Tokyo},
  series    = {Adv. {Stud}. {Pure} {Math}.},
  volume    = {73},
  mrnumber  = {3728501},
}

@Article{Py06,
  author       = {Py, Pierre},
  title        = {Quasi-morphismes et invariant de Calabi},
  issn         = {0012-9593},
  number       = {1},
  pages        = {177--195},
  volume       = {39},
  date         = {2006},
  doi          = {10.1016/j.ansens.2005.11.003},
  journaltitle = {Annales Scientifiques de l'École Normale Supérieure. Quatrième Série},
  mrnumber     = {2224660},
  shortjournal = {Ann. Sci. École Norm. Sup. (4)},
}

@Article{TW25,
  author     = {Tao, Bingxue and Wan, Renxing},
  date       = {2025},
  title      = {Proper actions on finite products of hyperbolic spaces},
  doi        = {10.48550/arXiv.2506.04856},
  eprint     = {2506.04856 [math]},
  eprinttype = {arxiv},
  number     = {{arXiv}:2506.04856},
  url        = {http://arxiv.org/abs/2506.04856},
  urldate    = {2025-10-28},
  abstract   = {A group \$G\$ is said to have property ({PH}') if there exist finitely many hyperbolic spaces \$X\_1,{\textbackslash}cdots,X\_n\$ on which \$G\$ acts coboundedly such that the diagonal action of \$G\$ on the product \${\textbackslash}prod\_\{i=1\}{\textasciicircum}{nX}\_i\$ equipped with \${\textbackslash}ell{\textasciicircum}1\$-metric is proper. A group \$G\$ has property ({PH}) if it virtually has property ({PH}'). This notion is a generalization of property ({QT}) introduced by Bestvina-Bromberg-Fujiwara {\textbackslash}cite\{{BBF}21\}. In this paper, we initiate the study of property ({PH}) of groups and give a complete characterization of groups with property ({PH}') or ({PH}) from lineal actions. In addition, by considering a central extension of groups \$1{\textbackslash}to Z{\textbackslash}to E{\textbackslash}to G{\textbackslash}to 1\$, we prove that \$E\$ has property ({PH}) (resp. ({QT})) if and only if \$G\$ has property ({PH}) (resp. ({QT})) and the Euler class of the extension is bounded. We also derive similar results for amalgamated direct products and graph products. As corollaries, we characterize when 3-manifold groups have property ({PH}) and obtain more interesting examples with property ({QT}) including the central extension of residually finite hyperbolic groups, the mapping class group of any finite-type surface and the outer automorphism group of torsion-free one-ended hyperbolic groups.},
  file       = {Preprint PDF:http\://arxiv.org/pdf/2506.04856v1:application/pdf},
  keywords   = {Mathematics - Group Theory, Mathematics - Geometric Topology},
  publisher  = {{arXiv}},
}
\end{document}